\newtheorem{theorem}{Theorem}[section]
\newtheorem{proposition}[theorem]{Proposition}
\newtheorem{corollary}[theorem]{Corollary}
\newtheorem{lemma}[theorem]{Lemma}
\newtheorem{definition}[theorem]{Definition}
\theoremstyle{definition}
\newtheorem{example}[theorem]{Example}
\newtheorem{observation}{Observation}
\newtheorem{assumption}{Assumption}
\newcommand{\co}[1]{\overset{\circ}{#1}}
\newcommand{\red}[1]{{#1}}
\definecolor{DeepPurple}{RGB}{61, 0, 249}
\newcommand{\X}{\mathbb{X}}
\newcommand{\wh}[1]{\widehat{#1}}
\newcommand{\tl}[1]{\widetilde{#1}}
\newcommand{\tll}[1]{\widetilde{#1}}
\DeclareMathOperator{\mspan}{span}
\DeclareMathOperator*{\argn}{arg\,min}
\providecommand\rm{}
\renewcommand{\b}[1]{{\bm{#1}}}
\renewcommand{\rm}[1]{{\mathrm{#1}}}
\newcommand{\br}[1]{\left\langle#1\right\rangle}
\newcommand{\set}[1]{\left\{#1\right\}}
\newcommand{\qb}[1]{\left[#1\right]}
\newcommand{\Id}{\bm{I}}
\renewcommand{\norm}[1]{\left\lVert #1 \right\rVert}
\renewcommand{\abs}[1]{\left| #1 \right|}
\newcommand{\A}{\mathbb{A}}
\newcommand{\C}{\mathbb{C}}
\newcommand{\E}{\mathbb{E}}
\newcommand{\N}{\mathbb{N}}
\newcommand{\R}{\mathbb{R}}
\newcommand{\Aa}{\mathcal{A}}
\newcommand{\Bb}{\mathcal{B}}
\newcommand{\Dd}{\mathcal{D}}
\newcommand{\Ff}{\mathcal{F}}
\newcommand{\Hh}{\mathcal{H}}
\newcommand{\Kk}{\mathcal{K}}
\newcommand{\Ll}{\mathcal{L}}
\newcommand{\Oo}{\mathcal{O}}
\newcommand{\Pp}{\mathcal{P}}
\newcommand{\Tt}{\mathcal{T}}
\renewcommand{\d}{\,\mathrm{d}}
\newcommand\restr[2]{\left.#1\right|_{#2}}
\newcommand*\samethanks[1][\value{footnote}]{\footnotemark[#1]}
\newcommand*{\email}[1]{\faEnvelopeO~\href{mailto:#1}{#1}}
\newcommand\xqed[1]{\leavevmode\unskip\penalty9999 \hbox{}\nobreak\hfill \quad\hbox{#1}}
\newcommand{\exampleSymbol}{\xqed{$\triangle$}}
\begin{document}

\title{Data-driven approximation of Koopman operators and generators: Convergence rates and error bounds}
\author[1,2]{Liam Llamazares-Elias\thanks{L.L.\ and S.L.\ contributed equally.}}
\author[3]{Samir Llamazares-Elias\samethanks}
\author[4]{Jonas Latz}
\author[5]{Stefan Klus}

\affil[1]{School of Mathematics, University of Edinburgh, UK}
\affil[2]{Current address: Department of Mathematics and Statistics, Lancaster University, UK, \email{l.llamazares@lancaster.ac.uk}}
\affil[3]{Department of Mathematics, University of Salamanca, Spain, \email{samirllamazares@usal.es}}
\affil[4]{Department of Mathematics, University of Manchester, UK, \email{jonas.latz@manchester.ac.uk}}
\affil[5]{School of Mathematical \& Computer Sciences, Heriot--Watt University, UK, \email{s.klus@hw.ac.uk}}

\date{}

\maketitle

\begin{abstract}
	Global information about dynamical systems can be extracted by analysing associated infinite-dimensional transfer operators, such as Perron--Frobenius and Koopman operators as well as their infinitesimal generators. In practice, these operators typically need to be approximated from data. Popular approximation methods are \emph{extended dynamic mode decomposition} (EDMD)  and \emph{generator extended mode decomposition} (gEDMD).	We propose a unified framework that leverages Monte Carlo sampling to approximate the operator of interest on a finite-dimensional space spanned by a set of basis functions. Our framework contains EDMD and gEDMD as special cases, but can also be used to approximate more general operators. Our key contributions are proofs of the convergence of the approximating operator under {relaxed} conditions. \red{We also prove that in some cases eigenpairs of the approximating operators weakly converge to eigenpairs of the exact operator{, in others they do not}.} Moreover, we derive explicit convergence rates and account for the presence of noise in the observations. Whilst all these results are broadly applicable, they also refine previous analyses of EDMD and gEDMD. We verify the analytical results with the aid of several numerical experiments. \\[1ex]
	\textbf{Keywords:} Koopman operator theory, convergence analysis, error bounds \\
	\textbf{MSC numbers:} 47A58, 37M25, 65P99
\end{abstract}

\section{Introduction} \label{intro section}

Dynamical systems are a vital tool to describe deterministic and stochastic processes in science and engineering: the motion of celestial bodies, dynamics of molecules, or the development of the human brain. Even the most complex dynamical systems can often be analysed by studying certain associated linear operators such as the Koopman operator, the Perron--Frobenius operator, and their generators \cite{koopman1931hamiltonian, lasota1998chaos, budivsic2012applied, klus_numerical_2016}. These operators have been used in a wide range of fields, such as molecular dynamics \cite{schutte2013metastability, SKH23}, fluid dynamics \cite{froyland2014well, froyland2016optimal}, and engineering \cite{vaidya2010nonlinear, POR20}. Koopman and Perron--Frobenius operators allow us to study the evolution of \emph{observables}, such as the velocity, acceleration, or energy of the system, and \emph{probability densities}. The generators of these operators are used to study the temporal rate of change of observables and densities, respectively.

As a result, the investigation of our dynamical system reduces to having access to a linear operator $\Aa$, which acts on observables or probability densities of our system. The goal of \emph{data-driven methods} is to obtain an approximation $\wh{\Aa}_{NM}$ of $\Aa$ using the information derived from studying the system using only a finite amount of observables $\psi_1,\dots,\psi_N$ and training points $\bm{x}_1, \dots, \bm{x}_M$. These methods gained considerable interest in the literature in recent years, and various methods have been developed to address this problem; some of the most notable are \emph{dynamic mode decomposition} {(DMD) \cite{rowley2009spectral,schmid2010dynamic,tu2013dynamic,colbrook2024multiverse}}, which relies on a least-squares estimate of the Koopman operator using a set of linear basis functions, \emph{extended dynamic mode decomposition} EDMD \cite{williams_datadriven_2014, klus2015numerical}, which can be regarded as a nonlinear generalisation of DMD, and \emph{generator extended dynamic mode decomposition} (gEDMD) \cite{klus_data-driven_2020,klus2020kernel}, which approximates the infinitesimal generator of the Koopman operator.

More recently, the convergence of these methods has been analysed. Convergence properties of EDMD were studied in \cite{korda_convergence_2018}, though no error bounds were given. The rate of convergence of gEDMD has been studied in \cite{kurdila2018koopman,zhang2023quantitative, Nske2021FiniteDataEB}. However, all these works require strong assumptions that may be impossible to verify in practice. In this work, our goal is to resolve these limitations. Our main contributions are as follows:
\begin{enumerate}  \setlength{\itemsep}{0ex}
	\item \textbf{A common framework:}  We introduce a unified framework to study methods for the data-driven analysis of dynamical systems (such as, but not limited to, EDMD and gEDMD). Within this framework, a linear operator $\Aa$ is estimated using a Monte Carlo approximation, denoted by $\wh{\Aa}_{NM}$. This approximation utilises $M$ Monte Carlo samples to estimate $\Aa _N$, which is the projection of $\Aa$  onto the dictionary space  $\mathcal{F}_N = \mspan(\psi_1,\ldots, \psi_N)$.\label{part 1}

	\item \textbf{Convergence:} We show that $\wh{\Aa}_{NM}$ is the projection of $\Aa$ onto the space of empirical samples $\wh{\Ff}_{NM} $, almost sure convergence of $\wh{\Aa}_{NM}$ to $\Aa_N$, convergence of eigenvalues and weak convergence of eigenfunctions of $\wh{\Aa }_{NM} $ along a subsequence. \label{part 2}

	\item \textbf{Error bounds:} We derive {explicit} bounds for the approximation errors $\|\wh{\Aa}_{NM}-\Aa_N\|$ and $\|\wh{\Aa}_{NM}-\Aa\|$ and then extend these results to the case of noisy observations. \label{part 4}

\item {\textbf{Relaxed assumptions:} We derive our results without the restrictive conditions prevalent in prior literature. Specifically, we do not assume the data is sampled from an invariant measure or a single trajectory, nor do we require an orthonormal dictionary, bounded operators on $\mathcal{F}$, or an invertible empirical Gram matrix. The precise conditions are detailed at the end of Section \ref{general framework section}.}
\end{enumerate}
The outline of the article is as follows: In Section \ref{general framework section}, we introduce Koopman and Perron--Frobenius operators and the mathematical setting for our problem. In Sections \ref{convergence data section}, \ref{convergence Galerkin section} and \ref{eigen section}, we prove Contribution \ref{part 2}. In Section \ref{Joint convergence section}, we prove Contribution \ref{part 4}. In Section \ref{simulations section}, we provide numerical simulations to illustrate our results. We conclude with a discussion and some recommendations based on the theoretical and numerical results in Section \ref{conclusion section}.  Our approach in Sections \ref{convergence data section} and \ref{convergence Galerkin section} is heavily influenced by the results in \cite{korda_convergence_2018}, and our work in Section \ref{Joint convergence section} is more closely related to that of \cite{Nske2021FiniteDataEB,zhang_effective_2016}. However, we develop these results under a more general framework in which an arbitrary operator is estimated and under weaker assumptions.

\section{A general framework for data-based recovery of dynamics} \label{general framework section}

In this section, we will introduce the required mathematical concepts and the notation used throughout the paper.

\subsection{Notation} \label{notation section}

Let $E_1,E_2$ be generic vector spaces and vectors $\Psi_1=\set{\psi_n}_{n=1}^{N_1} \subseteq E_1, \Psi_2=\set{\phi_n}_{n=1}^{N_2} \subseteq E_2$. We define $\mspan(\Psi_1),\mspan(\Psi_2)$ to be the smallest vector space containing $\Psi_1,\Psi_2$ respectively. An operator $\Tt \colon \mspan(\Psi_1) \to \mspan(\Psi_2)$ can be represented by any matrix $\bm{T}^{\Psi_1 \to \Psi_2} \in \mathbb{R}^{N_2 \times N_1}$ verifying
\begin{equation*}
	\Tt \psi_j = \sum_{i=1}^{N_2}\bm{T}^{\Psi_1 \to \Psi _2}_{ij}\phi_i, \quad j=1,\dots, N_1.
\end{equation*}
The matrix $\bm{T}^{\Psi_1 \to \Psi _2}$ always exists and if $\Psi_1$ and $\Psi _2$  are a collection of independent vectors, $\bm{T}^{\Psi_1 \to \Psi _2}$ is unique. If $\Psi _1=\Psi _2$ we write $\bm{T}^{\Psi_1}:=\bm{T}^{\Psi_1 \to \Psi _1}$.

We denote the Euclidean norm of a vector $\bm{v}\in\C^N$ by $\abs{\bm{v}}$. Given an operator $\Tt\colon\mathcal{D}\to\mathcal{F}$ between normed spaces and a matrix $\bm{T} \in \C^{N\times N}$, we denote the induced operator norms of $\Tt$ and $\bm{T}$ and the Frobenius norm of $\bm{T}$ by
\begin{equation*}
	\|\Tt\| := \sup_{\norm{\phi}_{\mathcal{D}}=1}\norm{\Tt\phi}_{\Ff}, \qquad \|\bm{T}\| := \sup_{\abs{\bm{v}}=1}\abs{\bm{T} \bm{v}}, \qquad 	\norm{\bm{T}}_F := \left(\sum_{i,j=1}^N\abs{\bm{T}_{ij}}^2\right)^{\nicefrac{1}{2}},
\end{equation*}
respectively.

We define random objects, such as random variables, stochastic processes, and random flows on a common underlying probability space $(\Omega, \mathfrak{A}, \mathbb{P})$, but usually simplify our presentation by ignoring this dependence. Moreover, we let  $(\X, \mathfrak{B}(\X), \mu)$ be an additional probability space with $\mathfrak{B}(\X)$ being the Borel $\sigma $-algebra on the topological space $\X$  and denote the space of $\mu$ square-integrable functions on $\X$ by $\Ff := L^2(\X\to\C, \mu)$.

In what follows, we will denote vectors by bold lowercase letters, matrices by bold uppercase letters, and  operators by calligraphic letters.  Moreover, $\bm{x}$ are elements of $\mathbb{X}$ and $\psi_1, \dots, \psi_N$ denote dictionary functions. Finally, we will always use $i,j,n \in \set{1, \dots,N}$ to index the dictionary and $m \in \set{1, \dots,M}$ to index samples where $M,N \in \N$ . For the sake of convenience, we include an overview of the notation used throughout the paper in Appendix~\ref{notation table section}.

\subsection{Dynamical systems and linear operators} \label{Koopman section}

Linear operators are a powerful tool for studying dynamical systems. Two such operators are the Koopman and Perron--Frobenius operators, along with their generators. Let $ \mathbb{X} $ be the state space and consider a deterministic discrete-time dynamical system $ \Phi \colon \mathbb{X} \to \mathbb{X} $ defined by
\begin{equation} \label{discrete}
	\bm{x}_{\ell+1} = \Phi(\bm{x}_\ell), \quad \ell \in \mathbb{N}_0,
\end{equation}
with an appropriate initial condition $\bm{x}_0 \in \mathbb{X}$. We refer to $\Phi$ as the \emph{flow} of the dynamical system. In practice, we may not have access to $\Phi$. In order to recover some information about the dynamical system, we may measure some quantity {$f\colon\X \to\C$} of the system, such as its velocity, acceleration, or energy and investigate how it evolves. That is, we study
\begin{equation}\label{Koopman operator}
	\Kk f(\bm{x}) := f(\Phi(\bm{x})),
\end{equation}
where the operator 
\red{$\Kk  \colon  C(\X) \to C(\X)$ is known as the \emph{Koopman operator}.}
Given a measure $\mu$ on $\X$, if $\Phi$ preserves $\mu$-null sets, one can extend the Koopman operator to $\Kk \colon L^\infty(\X) \to L^\infty(\X)$. {Its preadjoint operator, acting on $L^1(\X)$ and often denoted $\Kk_*$, is the \emph{Perron--Frobenius operator}.}

These ideas translate to continuous-time dynamical systems, such as
\begin{equation*}
	\bm{x}_t = \Phi^{t-s}(\bm{x}_s) \in \mathbb{X}, \quad s,t\in\R^+, s < t,
\end{equation*}
where now $\Phi^t \colon \mathbb{X} \to \mathbb{X}$ defines the flow in continuous time, and we study the time evolution of observables $f$ through the \emph{semigroup of Koopman operators} $\Kk^t$, $t \geq 0$, defined by
\begin{equation*}
	\Kk^t f(\bm{x}) := f(\Phi^t(\bm{x})).
\end{equation*}
The \emph{infinitesimal generator} of the Koopman semigroup $\Kk^t$ is then defined on \red{suitable $f$} as
\begin{equation*}
	\mathcal{L} f(\bm{x}) := \lim _{t \downarrow 0} \frac{1}{t}\left(\Kk^t f(\bm{x})-f(\bm{x})\right).
\end{equation*}
The above operators can also be extended to stochastic dynamical systems, i.e., to systems in which the flow $\Phi$ or $\Phi^t$ is a random object mapping from $\Omega \times \mathbb{X}$ to $\mathbb{X}$. In this setting, we still refer to the Koopman operators and semigroups as well as the associated Perron--Frobenius operators and their generators by $\Kk$, $\Kk^t$, $\Kk_*$, $(\Kk^t)_*$, $\Ll$, and $\Ll^*$, respectively. In this case, the Koopman operators and semigroups are given by
\begin{align} \label{eq:stochKK}
	\Kk f(\bm{x})   := \E[f(\Phi(\bm{x}))], \qquad
	\Kk^t f(\bm{x}) := \E[f(\Phi^t(\bm{x}))].
\end{align}
The corresponding Perron--Frobenius operators and the generators are then defined in an analogous fashion. A typical case is when $\bm{X}_t\in \X \red{ \subset \R^d}$ is a continuous time dynamical system evolving according to the stochastic differential equation
\begin{equation} \label{SDE}
	\mathrm{d} \bm{X}_t = \bm{b}(\bm{X}_t) \mathrm{d} t + \bm{\sigma}(\bm{X}_t) \, \mathrm{d} \bm{W}_t, \quad \bm{X}_0=\bm{x}.
\end{equation}

By applying Itô's formula, one can show that the generator $\Ll$ associated with the stochastic differential equation \eqref{SDE} \red{is defined on $f \in C^2(\mathbb{X})$ as}
\begin{equation} \label{Koopman generator}
	\mathcal{L} f= \bm{b}\cdot \nabla f+\frac{1}{2} \rm{Tr}\qty(\bm{\Sigma} \nabla^2 f),
\end{equation}
where $\bm{\Sigma} := \bm{\sigma \sigma}^\top$ \cite{cinlar2011probability}. In physical terms, $\Ll$ describes the infinitesimal rate of change of observables $f$ evolving under our dynamical system. That is, writing $ u(t, \bm{x}) := \Kk^t f(\bm{x})$, we have
\begin{equation*}
	\partial _t u = \Ll u.
\end{equation*}
The above is called the Kolmogorov backward equation. Similarly, $\Ll^*$ describes the evolution of probability distributions of our system. If $X_t$ has density $\nu(t, \bm{x}) \in C^1_tC^2_{\bm{x}}([0, T]\times \mathbb{X})$, i.e., is once differentiable in time and twice in space with bounded derivatives, it holds that
\begin{equation*}
	\partial _t \nu = \Ll^* \nu,
\end{equation*}
see \cite[Theorem 5.3.2]{pavliotis2016stochastic}.
This is known as the \emph{forward Kolmogorov equation} or \emph{Fokker--Planck} equation. As a result, knowledge of $\Ll$ gives us complete knowledge of the evolution of $\bm{X}$.

Through the expressions above, we see that the Koopman and Perron--Frobenius operators, along with their generators, are critical tools to describe how a dynamical system evolves.

\subsection{Mathematical framework} \label{continuous subsection}

We now move on to the data-driven approximation of an  operator of interest, which we denote by~$\Aa$. In what follows, this operator can be one of the operators we have introduced in the context of dynamical systems, i.e., $\Kk$ or $\Kk_*$ in discrete time and $\Kk^t$, $(\Kk^t)_* $, $\Ll$, or  $\Ll^*$ in continuous time, \red{in which case it is necessary that the flow $\Phi  $ preserves $\mu $-null sets}. However, our theory is not limited to these operators. As before, we consider the state space $(\mathbb{X}, \Bb, \mu)$ and write $\Ff := L^2(\mathbb{X}, \mu)$. The observables on which $\Aa $ can be evaluated define the \emph{domain} of $\Aa$, i.e.,
\begin{equation*}
	\Dd := \set{ f\in \Ff: \Aa f \in \Ff}.
\end{equation*}
We say that $\Aa$ is a \emph{closed operator} if the graph of $\Aa$,
\begin{equation*}
	\{(f, \Aa f) \in \Ff  \times \Ff : f \in \mathcal{D}\},
\end{equation*}
is a closed subspace of $\Ff\times\Ff$. If $\Aa$ is a closed operator, $\Dd$ is a Hilbert space with the inner product
\begin{equation*}
	\br{f,g}_{\Dd} := \br{f,g}_\Ff+\br{\Aa f, \Aa g}_\Ff, \quad \norm{f}_\Dd^2 := \br{f,f}_{\Dd}, \quad \forall f,g \in \Dd,
\end{equation*}
see \cite{stengl2023existence} for more details. A direct consequence of the above definitions is that $\norm{f}_{\Ff}\leq \norm{f}_{\Dd}$ and $\Aa\colon\Dd\to\Ff$ is a continuous operator with $\norm{\Aa}\leq 1$.

\begin{example}
Let $\X = \R^d$ and let $\mu = \mathcal{N} (0,\b{I}) $ be the unit Gaussian measure on $\R^d$. Consider the dynamical system
\begin{align*}
	\b{x} '(t) = -\b{x}, \quad \b{x}(0)= \b{x}_0.
\end{align*}
Given continuous  $f \in C_c(\X)$, by the change of variables $\bm{y} = \bm{x} e^{-t}$, we have
\begin{align*}
\norm{\Kk^t f}_\Ff^2 &=\frac{1}{(2 \pi)^{\nicefrac{d}{2}}}  \int_{\R^d}\abs{f\left(\bm{x} e^{-t}\right)}^2 e^{-\bm{x}^2 / 2} \d \bm{x}=\frac{e^{dt}}{(2 \pi)^{\nicefrac{d}{2}}}  \int_{\R^d}\abs{f(\bm{y})}^2 e^{-\bm{y}^2 e^{2t}/ 2} \d \bm{y}\leq e^{dt}\norm{f}_{\Ff}^2.
\end{align*}
As a result, we may extend $\Kk ^t$ by density to $\Kk ^t \colon \Ff \to \Ff $. {By construction, we have the bound $\|\Kk^t f\|_\Ff \le e^{dt/2} \|f\|_{\Ff}$ for all $t \ge 0$. Furthermore, $\lim_{t \downarrow 0} \|\Kk^t f - f\|_\Ff = 0$ by the dominated convergence theorem, establishing that $\Kk^t$ is a strongly continuous semigroup.} By the Hille--Yosida theorem {\cite[Theorem 3.8]{engel2000one}}, the infinitesimal generator $\Ll$ is closed and densely defined. This shows that we may take both $\Aa =\Kk^{t}$ for some fixed $t$ and $\Aa =\Ll$ in our mathematical framework.
\end{example}

\begin{example}\label{SDE example}
Let $\X=[0,1]^d$ and let $\mu $ be the Lebesgue measure on $\X$. Consider the SDE \eqref{SDE} with drift and diffusion terms
\begin{equation*}
		      \bm{b}(\bm{x})=\left[\begin{array}{c}
				      4 x_1-4 x_1^3 \\
				      -2 x_2
			      \end{array}\right], \quad  \bm{\sigma(x)}=\left[\begin{array}{cc}
				      0.7 & x_1 \\
				      0   & 0.5
			      \end{array}\right],
	      \end{equation*}
and absorbing boundaries. Since $\bm{\Sigma} = \b{\sigma }\b{\sigma }^\top$ is elliptic, the Fokker--Planck equation is well-posed. As a result, the push-forward $ (\Phi^t)^* \mu $ has a density, so $\Phi $ preserves sets of Lebesgue measure zero and $\Kk^t$ is well defined on $\Ff $. Applying typical energy estimates for second-order PDEs and the Hille--Yosida theorem, $\Ll $ is closed, see \cite[Theorem 5]{evans2022partial}. As a consequence, we may take $\Aa = \Kk^{t}$ for some fixed $t$  and $\Aa = \Ll $.
\end{example}

In practice, there are two main cases. First, if $\Aa$ is a bounded operator on $\Ff$ ({e.g., the Koopman operator $\Kk^t$ provided $\Phi$ preserves $\mu$-null sets}), then it is defined on all of $\Ff$ and closed, so $\Dd=\Ff$, and the norms $\norm{\cdot}_\Dd$ and $\norm{\cdot}_\Ff$ are equivalent. Second, $\Aa$ may be an unbounded operator, such as the infinitesimal generator $\Ll$ of a semigroup (such as in \eqref{Koopman generator}). {For $\Ll$ to be a closed operator, it is sufficient that the associated semigroup $\Kk^t$ is strongly continuous; standard semigroup theory dictates that the generator of any strongly continuous semigroup is densely defined and closed.} In this case, $\Dd$ is a dense proper subspace of $\Ff$, see \cite{evans2022partial} for details.

A common example is where \red{$\X$ is an open subset of $\R^d$} and $\Dd$ is a weighted Sobolev space $H^k(\X, \mu )$ \red{supplemented by appropriate boundary conditions}~\cite{klus_data-driven_2020}. \red{Specifically, if $\mu$ is absolutely continuous with respect to the Lebesgue measure with a locally integrable density, the weighted Sobolev space $H^k(\X,\mu)$ is defined as the space of functions in $L^2(\X,\mu)$ whose classical weak derivatives $D^\alpha$ up to order $k$ remain in $L^2(\X,\mu)$,}
equipped with the norm \cite{turesson2000nonlinear, gol2009weighted}
\begin{equation*}
	\left\|\psi \right\|_{H^k(\X, \mu )}=\sum_{|\alpha|\leq k}\left(\int_{\X}\left|D^\alpha \psi\right|^2 \d \mu \right)^{\nicefrac{1}{2}}.
\end{equation*}
For example, ignoring boundary conditions, let $\mu$ be absolutely continuous with respect to the Lebesgue measure and let $\Aa$ be the generator in \eqref{Koopman generator}, then $\Dd= H^2(\X, \mu)$. If the system is deterministic, on the other hand, then $\Dd=H^1(\X, \mu)$.

{We emphasize that this is strictly a concrete example for differential operators on Euclidean domains. For systems with singular measures (e.g., supported on attractors), classical weak derivatives are undefined, and the domain $\Dd$ is instead defined abstractly via the generator. Provided the Koopman semigroup is strongly continuous on $L^2(\X, \mu)$, standard semigroup theory guarantees that this abstract domain $\Dd$ is dense in $L^2(\X, \mu)$, ensuring our general framework applies without requiring absolute continuity.}

Our goal is to study a given dynamical system through the action of $\Aa $ on observables $f \in \Dd $. A practical difficulty is that we will typically not know the value of $\Aa f(\bm{x})$ for every observable $f$ and every point $\bm{x} \in \mathbb{X}$, as this would correspond to having an infinite amount of information about our system. The most we can hope for is to know this information for a finite amount of observables ${\psi_1, \dots, \psi_N}\subset \Dd$, called the \emph{dictionary}, and a finite amount of training data points $\bm{x}_1, \dots, \bm{x}_M$, which are assumed to be sampled independently from some arbitrary probability measure $\mu $. That is, our total information is
\begin{equation}\label{info}
	\big\{\psi_n(\bm{x}_m), \Aa \psi_n (\bm{x}_m)\big\}_{m,n=1}^{M,N}.
\end{equation}
Here, the terms $\Aa \psi_n (\bm{x}_m)$ may be evaluated exactly or approximated using existing techniques (e.g., finite differences, Kramers--Moyal expansions, etc.), see \cite{klus_data-driven_2020,brunton2016discovering}. Since $\Aa$ is infinite-dimensional, this finite amount of information cannot fully describe $\Aa$ except in simple cases. In general, the best we can hope is to obtain some approximation  $\wh{\Aa}$ of ${\Aa}$.

Many approaches for the data-driven description of dynamics, such as DMD, EDMD, and gEDMD, can be described as trying to obtain the best approximation $\wh{\Aa}$ of $\Aa$ given the limited data in \eqref{info}. In fact, all of them can be subsumed under the framework that we will introduce below.

There are two aspects that need to be considered: a finite number of dictionary functions $\psi_1,\ldots,\psi_N$ that may not span the space $\mathcal{F}$ and a finite number of samples $\bm{x}_1,\ldots,\bm{x}_M$ that are not space-filling in $\mathbb{X}$. We now discuss the implications of these aspects, starting with the finite dictionary.
Using the information in \eqref{info}, the operator $\Aa$ can be approximated on
\begin{equation*}
	\Ff_N := \mspan(\Psi) = \mspan(\psi_1, \dots, \psi_N)
\end{equation*}
by taking sampling-based approximations of the projection of $\mathcal{A}$ onto $\Ff_N$. In all the preceding methods, this is done as follows: We would like our finite-dimensional approximation $\Aa_N\colon \Ff_N \to \Ff_N$ to $\Aa$ to satisfy
\begin{equation} \label{Galerkin matrices}
	\big[\bm{C}_N\big]_{ij} :=\br{\Aa \psi_i, \psi_j}_{L^2(\mu )}=\br{\Aa_N\psi_i, \psi_j}_{L^2(\mu )}= \qty[\qty(\bm{A}^\Psi_N)^\top \bm{G}_N]_{ij},
\end{equation}
for all $i,j$, where
\begin{equation*}
	\big[\bm{C}_N\big]_{ij} :=\br{\Aa \psi_i, \psi_j}_{L^2(\mu )},\quad	\big[\bm{G}_N\big]_{ij} := \br{\psi_i, \psi_j}_{L^2(\mu )}
\end{equation*}
are the \emph{structure matrix} and the \emph{Gram matrix} of $\Aa$ and $\Psi$, respectively. If we could evaluate the above integrals exactly, we would obtain our approximation via
\begin{equation} \label{exact Galerkin}
	\big(\bm{A}_N^\Psi\big)^\top = \bm{C}_N\bm{G}_N^{-1},
\end{equation}
where the invertibility of the Gram matrix is equivalent to $\Psi$ being linearly independent. The operator $\mathcal{A}_N\colon\Ff_N\to\Ff_N$ defined through \eqref{exact Galerkin} satisfies \eqref{Galerkin matrices} and, as a result, is necessarily the projection of $\Aa$ onto $\Ff_N$. That is, if we write $\Pp_{\Ff_N}$ for the projection onto $\Ff_N$,
\begin{equation} \label{Galerkin operator}
	\Aa_N=\restr{\Pp_{\Ff_N} \Aa}{\Ff_N} .
\end{equation}
Due to the connection with the finite element method (FEM), \eqref{Galerkin operator} is often called the \emph{Galerkin approximation} (or projection) of $\Aa$. \red{This is also known in some contexts as the finite section of $\Aa$.}

However, since we only have access to the information in \eqref{info}, the best we can do is to use the samples $\bm{x}_1,\ldots,\bm{x}_M \sim \mu$ and define the \emph{empirical structure matrix} and the \emph{empirical Gram matrix} as
\begin{equation} \label{approximate matrices}
	\big[\wh{\bm{C}}_{NM}\big]_{ij} := \frac{1}{M} \sum_{m=1}^M \overline{\psi_j(\bm{x}_m)}\Aa\psi_i (\bm{x}_m), \quad \big[\wh{\bm{G}}_{NM}\big]_{ij} := \frac{1}{M}\sum_{m=1}^{M} \psi_i(\bm{x}_m) \overline{\psi_j(\bm{x}_m)},
\end{equation}
respectively.
Let us denote the \emph{empirical measure} associated with $\{\bm{x}_1,\ldots,\bm{x}_M\}$ by
\begin{equation*}
	\wh{\mu}_M := \frac{1}{M} \sum_{m=1}^M \delta _{\bm{x}_m},
\end{equation*}
then the above can also be written as
\begin{equation*}
	\big[\wh{\bm{C}}_{NM}\big]_{ij}= \br{\Aa \psi_i, \psi_j}_{L^2(\wh{ \mu }_M)} , \quad \big[\wh{\bm{G}}_{NM}\big]_{ij}= \br{\psi_i, \psi_j}_{L^2(\wh{ \mu }_M)} .
\end{equation*}
We define, analogously to \eqref{exact Galerkin},
\begin{equation} \label{approximate Galerkin}
	\wh{\bm{A}}_{NM}^\top := \wh{\bm{C}}_{NM}\wh{\bm{G}}_{NM}^+.
\end{equation}
Here, $\wh{\bm{G}}_{NM}^+$ denotes the \emph{Moore--Penrose pseudoinverse} \cite{Penrose} of $\wh{\bm{G}}_{NM}$ and by construction is such that \eqref{approximate Galerkin} minimises the empirical error,
\begin{equation*}
	\norm{\wh{\bm{A}}_{NM}^\top\wh{\bm{G}}_{NM} - \wh{\bm{C}}_{NM}}^2_F := \sum_{i,j=1}^N \abs{\big[\wh{\bm{A}}_{NM}^\top\wh{\bm{G}}_{NM}\big]_{ij} - \big[\wh{\bm{C}}_{NM}\big]_{ij}}^2.
\end{equation*}
The Gram matrix $\bm{G}_N$ is always invertible as the basis functions contained in $\Psi$ are assumed to be linearly independent. However, its Monte Carlo approximation $\wh{\bm{G}}_{NM}$ may not be invertible.  For this reason, the pseudoinverse is required and ensures that we obtain the matrix $\wh{\bm{A}}_{NM} \in \C^{N\times N}$. This can give rise to theoretical issues, such as the discontinuity of the map from a matrix to its pseudoinverse. These problems can be resolved with careful analysis. We will touch upon this in more detail in Theorem \ref{projection theorem}.

By the strong law of large numbers and the definitions in \eqref{Galerkin matrices} and \eqref{approximate matrices}, almost surely
\begin{equation*}
	\lim_{M \to \infty}\wh{\bm{C}}_{NM} = \bm{C}_{N}, \qquad
	\lim_{M \to \infty}\wh{\bm{G}}_{NM} = \bm{G}_N .
\end{equation*}
As a result, we expect $\wh{\bm{A}}_{NM}$ to converge to $\bm{A}^\Psi_{N}$ as the number of data points goes to infinity. This will be studied in Section~\ref{convergence data section}. Convergence to $\Aa$, when the number of basis functions goes to infinity, is studied in Sections~\ref{convergence Galerkin section} and \ref{Joint convergence section}, where also error bounds are established. Convergence to the spectrum of $\Aa $ is studied in Section \ref{eigen section}.  In practice, one may not have access to the exact values of $ \psi_n(\bm{x}_m),\Aa \psi_n(\bm{x}_m)$, but only to an approximation or a noisy measurement. To deal with this case, we also study in Subsection \ref{Noise section} the case where our data has some noise $\bm{\eta}, \bm{\xi}$, i.e., we have access to the mappings
\begin{equation*}
	\b{x}_m \to \psi_n(\bm{x}_m)+\eta_N^{m,n}, \quad \b{x}_m \to \Aa \psi_n(\bm{x}_m)+\xi_N^{m,n}, \quad n=1,\dots,N.
\end{equation*}
\red{We show how a direct implementation in this case introduces some bias, whereas by splitting the evaluations of the basis functions into two batches, convergence of the approximations to the true dynamics is obtained.}

We stress once more that all the previously mentioned methods for data-driven recovery of dynamics (DMD, EDMD, and gEDMD) fall into this framework and are thus covered by the analysis. Here, we use minimal assumptions. More precisely, we require that the observables be linearly independent, sufficiently regular and that the data be i.i.d.\ (Assumptions \ref{assumption} and \ref{moments assumption}). To show convergence and error bounds, it is also necessary that, as $N$ goes to infinity, the basis functions $\psi_1,\dots,\psi_N$ cover the whole space (Assumption \ref{projection assumption} and \ref{projection operator assumption}) and are bounded (Assumption \ref{bn assumption}). Finally, for the case where the measurements are noisy, we assume that the noise is centred at zero and independent (Assumption \ref{noise assumption}).

\section{Data-driven approximation as a projection} \label{convergence data section}
The following section is inspired by \cite{korda_convergence_2018}. However, in our analysis we do not require the \emph{empirical Gram matrix} $\wh{\bm{G}}_{NM}$  to be invertible. We discuss natural situations in which $\wh{\bm{G}}_{NM}$ may not be invertible throughout this section. We begin by imposing the basic assumptions that will be used in what follows.

\begin{assumption} \label{assumption}
	We assume the following:
	\begin{enumerate}[label=(\alph*), ref=\theassumption(\alph*)]  \setlength{\itemsep}{0ex}
		\item The basis functions $\Psi=\set{\psi_1,\dots,\psi_N}\subset \Dd$ are linearly independent. \label{a1}
		\item The functions $\{\psi_n, \Aa\psi_n\}_{n=1}^N $ are continuous $\mu $ almost everywhere. \label{continuous}
		\item The points $\set{\bm{x}_m}_{m=1}^M\subset \X$ are i.i.d.\ samples from $\mu$. \label{a3}
	\end{enumerate}
\end{assumption}
Point $\ref{a1}$ of Assumption \ref{assumption} is necessary to ensure that the Gram matrix $\bm{G}_N$ is invertible.  Point \ref{continuous} is required so that the pointwise evaluation in the Monte Carlo approximations \eqref{approximate matrices} is well-defined. Lastly, Point~\ref{a3} is the basic assumption underlying Monte Carlo approximations.

\begin{observation}\label{smoothness observation}
	In the case where $\Aa$ is a differential operator of order $k$ and $\X$ is an open subset of $\R^d$ with uniformly Lipschitz boundary (for example if $\X=\R^d$ or if $\X$ is any open subset with Lipschitz continuous boundary), the continuity of \ref{continuous} requires continuity of the derivatives of order $k$ of the observables. By Sobolev embedding, a sufficient condition is $\Psi \subset H^{s}(\X,\mu)$ for $s>k+d/2$ (see \cite[Section 12.3]{leoni2024first}).    For example, if $\Aa $ is the generator $\Ll$ of the Koopman operator defined in~\eqref{Koopman generator}, then, in general, we require that the second derivatives of $\Psi$ be continuous almost everywhere. However, if the stochastic dynamics in \eqref{SDE} are reversible with respect to $\mu$ (for example, if $\mu$ is the Gibbs distribution), given smooth $\psi, \varphi$, we have
	\begin{equation*}
		\br{\Aa \psi, \varphi}_{L^2(\mu )}=-\frac{1}{2} \br{\bm{\Sigma} \nabla \psi, \nabla \varphi} _{L^2(\mu)}
	\end{equation*}
	as shown in \cite{zhang_effective_2016}. As a result, we can define
	\begin{equation*}
		\big[\bm{C}_{N}\big]_{i j} := -\frac{1}{2} \br{\bm{\Sigma} \nabla \psi_i, \nabla \psi_j} _{L^2(\mu)}, \quad
		\big[\wh{\bm{C}}_{NM}\big]_{ij} := -\frac{1}{2} \br{\bm{\Sigma} \nabla \psi_i, \nabla \psi_j} _{L^2(\wh{\mu }_M)}.
	\end{equation*}
	This amounts to an integration by parts and can also be carried out when $\mu $ is the Lebesgue measure. Consequently, we then only need the first derivatives of $\Psi$ to be continuous almost everywhere. This is useful if one wants to use piecewise linear basis functions as in FEM, see Section \ref{simulations section} and, for example,~\cite{zhang2023quantitative}.
\end{observation}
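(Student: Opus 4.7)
The observation makes two essentially independent claims that I would verify separately: (i) the Sobolev condition $\Psi \subset H^s(\X,\mu)$ with $s > k + d/2$ suffices for the continuity hypothesis of Assumption \ref{continuous} when $\Aa$ is a differential operator of order $k$, and (ii) when the SDE \eqref{SDE} is reversible with respect to $\mu$, the generator admits a symmetric Dirichlet-form representation that reduces the required regularity of $\Psi$ from order $k=2$ to order $k=1$. Both parts are verifications using standard tools rather than new results.

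For (i), the plan is to invoke a weighted Sobolev embedding. Since $\X$ has uniformly Lipschitz boundary, there is an extension operator $H^s(\X,\mu) \to H^s(\R^d,\mu)$, after which the Morrey-type embedding $H^s(\R^d,\mu) \hookrightarrow C^0$, valid for $s > d/2$ under natural conditions on the weight, applies. If $\psi \in H^s(\X,\mu)$ with $s > k + d/2$, then every weak derivative $D^\alpha \psi$ with $|\alpha|\leq k$ lies in $H^{s-|\alpha|}(\X,\mu) \subseteq H^{s-k}(\X,\mu)$, and the exponent $s-k > d/2$ produces a continuous representative. Applied for all $|\alpha|\leq k$, this makes $\psi_n$ and $\Aa\psi_n$, which is a linear combination of derivatives of $\psi_n$ of order at most $k$ with $\mu$-measurable coefficients, continuous $\mu$-almost everywhere, as required by Assumption \ref{continuous}.

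For (ii), the plan is to use $\mu$-reversibility to derive the symmetric bilinear form. Writing $\mu = \rho\,\mathrm{d}\bm{x}$ with $\rho > 0$ smooth, reversibility is equivalent to the drift identity $\bm{b} = \tfrac{1}{2\rho}\nabla\cdot(\bm{\Sigma}\rho)$, as recalled in \cite{pavliotis2016stochastic}. Substituting this into \eqref{Koopman generator} and integrating the second-order term by parts against $\overline{\varphi}\rho$ produces two contributions: the claimed Dirichlet form $-\tfrac{1}{2}\br{\bm{\Sigma}\nabla\psi,\nabla\varphi}_{L^2(\mu)}$, together with a remainder that the drift identity shows to cancel exactly against $\br{\bm{b}\cdot\nabla\psi,\varphi}_{L^2(\mu)}$. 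A density argument, starting from smooth functions with compact support or suitable decay, then extends the identity to the class of $\psi,\varphi$ needed to define $\bm{C}_N$ and $\wh{\bm{C}}_{NM}$.

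The main obstacle is to make the weighted Sobolev embedding rigorous: off-the-shelf theorems typically require $\mu$ to satisfy a Muckenhoupt-type condition, or, more concretely, that $\rho$ be bounded above and away from zero on compact subsets of $\X$. This is precisely why the observation cites \cite[Section 12.3]{leoni2024first}, and the assumption is best treated as a standing background regularity hypothesis on $\mu$. A secondary, minor issue is controlling boundary terms in the integration by parts when $\X \subsetneq \R^d$, which is handled by restricting to observables vanishing on $\partial\X$ or by assuming sufficient decay of $\bm{\Sigma}\rho$ there.
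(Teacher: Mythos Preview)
Your proposal is correct and carefully thought out, but note that the paper does not actually prove this observation: it is stated as a remark, with the Sobolev embedding claim delegated to \cite[Section 12.3]{leoni2024first} and the Dirichlet-form identity to \cite{zhang_effective_2016}. Your sketch therefore goes beyond what the paper provides by outlining how those cited facts would be verified --- the extension plus Morrey embedding for part (i), and the reversibility drift identity $\bm{b} = \tfrac{1}{2\rho}\nabla\cdot(\bm{\Sigma}\rho)$ followed by integration by parts for part (ii) --- which is exactly the standard route and matches the content of the references the paper invokes.
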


As we will see in Theorem \ref{projection theorem}, the approximation $\wh{\bm{A}}_{NM}$ is best defined on the \emph{empirical spaces}
\begin{equation*}
	\wh{\Ff}_{M} := L^2(\X\to\mathbb{C},\wh{\mu}_M), \quad \red{\wh{\Dd }_M:=  \wh{ \Ff} _M \times \wh{\Ff} _M}.
\end{equation*}
Elements in $\Ff$ are not in $\wh{\Ff}_M$ as functions defined $\mu $ almost everywhere are not in general well-defined  $\wh{\mu }_M $ almost everywhere. However, if $\phi \in \Ff $ is continuous almost everywhere, we can view $\phi $  as an element of $\wh{\Ff}_M$ through the (non-injective) mapping
\begin{equation} \label{identification 0}
	\phi \mapsto \phi^{\red{\wh{\Ff }}} := \sum_{m=1}^M \phi(\bm{x}_m)\delta_{\bm{x}_m}\in \wh{\Ff }_M.
\end{equation}
Likewise, given $\psi \in \Dd $ such that $\Aa  \psi$   
is continuous almost everywhere, we can view $\psi $  as an element of $\red{\wh{\Dd}}_M$ through 
\begin{equation} \label{identification}
	\psi \mapsto \psi^{\red{\wh{\Dd }}} := \sum_{m=1}^M \qty(\psi(\bm{x}_m), \red{\Aa \psi (\b{x}_m)})\delta_{\bm{x}_m}\in \red{\wh{\Dd}}_M.
\end{equation}
We also use the notation
\begin{align*}	\Psi^{\red{\wh{\Ff}} } &:= \{\psi_n^{\red{\wh{\Ff}}}\}_{n=1}^N \subset \wh{\Ff}_M, \quad \red{\wh{\Ff}}_{NM} := \mspan(\Psi^{\red{\wh{\Ff}}})\subset \wh{\Ff}_{M}, \\ 
    \red{\Psi^{\wh{\Dd}} } &:= \{\psi_n^{\wh{\Dd}}\}_{n=1}^N \subset \wh{\Dd}_M, \quad \wh{\Dd}_{NM} := \mspan(\Psi^{\wh{\Dd}})\subset \wh{\Dd}_{M}
\end{align*}
to denote the basis $\Psi$ and subspace $\Ff_N$ when viewed as objects in $\wh{\Ff}_M$ \red{and $\wh{\Dd}_M$} using \eqref{identification 0} and \eqref{identification}, respectively. We denote by $\Aa^{\red{\mathrm{emp}}}$  the operator induced by $\Aa$, i.e.,
\begin{equation}\label{induced operator}
\red{\Aa^{\red{\mathrm{emp}}} \colon \wh{\Dd }_M \to \wh{\Ff }_M}, \quad   	\Aa^{\red{\mathrm{emp}}}\psi^{\red{\wh{\Dd }}} := \qty(\Aa \psi)^{\red{\wh{\Ff }}}.
\end{equation}
\red{There may be various functions $\psi \in \Dd $ with the same empirical representative  $\psi ^{\wh{\Dd } }$. However, by construction of ${\cdot }^{\wh{\Dd }}  $, all of them map to the same value $\qty(\Aa \psi)^{\wh{\Ff }} $. As a result, $\Aa^{\red{\mathrm{emp}}} $ is well defined.}

The functions $\psi^{\red{\wh{\Dd}} }$ and $(\Aa\psi)^{\red{\wh{\Ff}} }$ are only well-defined if $\psi$ and $\Aa\psi$ are continuous almost everywhere. In our case, this is satisfied for all $\psi\in \Ff_N$ by Point \ref{continuous} of Assumption \ref{assumption}. Using this notation, the Monte Carlo approximation of the structure and Gram matrices in \eqref{approximate matrices} become
\begin{equation} \label{approximate matrices prod}
	\big[\wh{\bm{C}}_{NM}\big]_{ij}=\br{\Aa^{\red{\mathrm{emp}}} \psi^{\red{\wh{\Dd}} }_i, \psi_j^{\red{\wh{\Ff}} }}_{L^2(\wh{\mu }_M)}, \quad \big[\wh{\bm{G}}_{NM}\big]_{ij}=\br{\psi_i^{\red{\wh{\Ff}} }, \psi_j^{\red{\wh{\Ff}} }}_{L^2(\wh{\mu }_M)}.
\end{equation}
Here, the Gram matrix $\wh{\bm{G}}_{NM}$ with respect to $\Psi^{\red{\wh{\Ff}} }$ is in general \emph{not} invertible as $\Psi^{\red{\wh{\Ff}} }$ may no longer be linearly independent (consider for example the case $M=1$ and $N=2$). By construction, $\wh{\Ff}_M$ is a Hilbert space, so we can define the projection of $\wh{\Ff}_M$ onto $\wh{\Ff}_{NM}$ by
\begin{equation*}
	\Pp_{\wh{\Ff}_{NM}}\colon\wh{\Ff}_M\to \wh{\Ff}_{NM}.
\end{equation*}
We prove that our data-driven approximation $\wh{\bm{A}}_{NM}$ corresponds to the projection of $\Aa^{\red{\mathrm{emp}}}$ onto $\wh{\Ff}_{NM}$.

\begin{theorem}[Empirical projection] \label{projection theorem}
	Let $\Psi $ satisfy Assumption \ref{continuous}. Then the matrix $\wh{\bm{A}}_{NM}$ which approximates $\Aa^{\red{\mathrm{emp}}}$ is, with probability $1$, a matrix representation of the projection of $\Aa^{\red{\mathrm{emp}}}$ onto $\wh{\Ff}_{NM}$. That is,
	\begin{equation*}
		\wh{\Aa}_{NM}^{\red{\mathrm{emp}}}=\restr{\Pp_{\wh{\Ff}_{NM}}\Aa^{\red{\mathrm{emp}}}}{\red{\wh{\Dd}}_{NM}},
	\end{equation*}
	where $\wh{\Aa}_{NM}^{\red{\mathrm{emp}}}: \wh{\Dd }_{NM} \to \wh{\Ff }_{NM}  $ is the operator that has matrix  $\wh{\bm{A}}_{NM}$ in the basis $\Psi^{\red{\wh{\Dd}}}, \Psi^{\red{\wh{\Ff}}}$.
\end{theorem}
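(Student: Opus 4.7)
The plan is to reduce the statement to classical facts about the Moore--Penrose pseudoinverse by identifying $\wh{\Ff}_M$ with $\C^M$ via sample evaluation. First I would introduce the sampling matrices $\bm{V} \in \C^{M \times N}$ with $V_{mi} := \psi_i(\bm{x}_m)$ and $\bm{W} \in \C^{M \times N}$ with $W_{mi} := \Aa\psi_i(\bm{x}_m)$. These are well defined with probability $1$: Assumption~\ref{continuous} makes $\psi_i,\Aa\psi_i$ continuous off a $\mu$-null set, and by Assumption~\ref{a3} the i.i.d.\ samples avoid this set almost surely. Under the isomorphism $\wh{f} \mapsto (f(\bm{x}_m))_{m=1}^M$ equipped with the scaled Hermitian inner product $\br{\bm{u},\bm{v}} := \frac{1}{M}\bm{v}^*\bm{u}$, the basis element $\wh{\psi}_i$ corresponds to the $i$th column of $\bm{V}$, the element $\wh{\Aa}\wh{\psi}_i = \wh{\Aa\psi_i}$ to the $i$th column of $\bm{W}$, and $\wh{\Ff}_{NM} = \mspan(\wh{\Psi})$ to $\mathrm{range}(\bm{V})$. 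Direct expansion of \eqref{approximate matrices prod} then yields $\wh{\bm{G}}_{NM}^\top = \frac{1}{M}\bm{V}^*\bm{V}$ and $\wh{\bm{C}}_{NM}^\top = \frac{1}{M}\bm{V}^*\bm{W}$.

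Second, I would characterise the projection. The element $\Pp_{\wh{\Ff}_{NM}}\wh{\Aa}\wh{\psi}_j = \sum_i \alpha_i^{(j)}\wh{\psi}_i$ is determined by Galerkin orthogonality against each $\wh{\psi}_k$, which after expanding the inner products becomes the normal equation
\begin{equation*}
    \wh{\bm{G}}_{NM}^\top\, \bm{\alpha}^{(j)} \;=\; \wh{\bm{C}}_{NM}^\top \bm{e}_j .
\end{equation*}
The right-hand side lies in $\mathrm{range}(\bm{V}^*) = \mathrm{range}(\wh{\bm{G}}_{NM}^\top)$, so the system is consistent. Taking the minimum-norm solution supplied by the Moore--Penrose pseudoinverse, $\bm{\alpha}^{(j)} = (\wh{\bm{G}}_{NM}^\top)^+\wh{\bm{C}}_{NM}^\top\bm{e}_j$, and assembling these vectors into a matrix whose $j$th column is $\bm{\alpha}^{(j)}$, then transposing and using that $\wh{\bm{G}}_{NM}$ is Hermitian (hence so is $\wh{\bm{G}}_{NM}^+$ and $(\wh{\bm{G}}_{NM}^\top)^+ = (\wh{\bm{G}}_{NM}^+)^\top$), gives precisely $\wh{\bm{A}}_{NM}^\top = \wh{\bm{C}}_{NM}\wh{\bm{G}}_{NM}^+$, matching the estimator in \eqref{approximate Galerkin}. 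It then follows that $\wh{\Aa}_{NM}\wh{\psi}_j = \sum_i [\wh{\bm{A}}_{NM}]_{ij}\wh{\psi}_i = \Pp_{\wh{\Ff}_{NM}}\wh{\Aa}\wh{\psi}_j$ for every $j$, which is the claim.

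The main obstacle is the potential rank deficiency of $\wh{\bm{G}}_{NM}$, which reflects the fact that $\wh{\Psi}$ may fail to be linearly independent in $\wh{\Ff}_M$ (for instance when $M<N$). In that regime there is no unique coefficient vector representing the projection, and one must verify both that the normal equation is \emph{consistent}, so that a solution exists, and that the specific minimum-norm solution returned by the pseudoinverse furnishes a legitimate representation of the (unique) orthogonal projection. Both points follow from the common factor $\bm{V}^*$ appearing in $\wh{\bm{G}}_{NM}^\top$ and $\wh{\bm{C}}_{NM}^\top$ together with the defining properties of the Moore--Penrose pseudoinverse, and this is precisely the point at which the present argument departs from and strengthens the analysis in \cite{korda_convergence_2018}, which required the invertibility of $\wh{\bm{G}}_{NM}$.
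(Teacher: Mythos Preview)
Your argument is correct in its essentials and takes a genuinely different route from the paper's. The paper proceeds variationally: it uses that $\wh{\bm{A}}_{NM}$ minimises $\norm{\wh{\bm{C}}_{NM}-\wh{\bm{A}}^\top\wh{\bm{G}}_{NM}}_F$ and that any matrix representation of $\restr{\Pp_{\wh{\Ff}_{NM}}\wh{\Aa}}{\wh{\Ff}_{NM}}$ drives this residual to zero, whence $\wh{\bm{C}}_{NM}=\wh{\bm{A}}_{NM}^\top\wh{\bm{G}}_{NM}$. You instead realise $\wh{\Ff}_M$ concretely as $(\C^M,\frac{1}{M}\langle\cdot,\cdot\rangle)$ via the sampling matrices $\bm{V},\bm{W}$, write Galerkin orthogonality as the normal equation $\wh{\bm{G}}_{NM}^\top\bm{\alpha}^{(j)}=\wh{\bm{C}}_{NM}^\top\bm{e}_j$, verify consistency from the common left factor $\bm{V}^*$ in $\wh{\bm{G}}_{NM}^\top=\tfrac{1}{M}\bm{V}^*\bm{V}$ and $\wh{\bm{C}}_{NM}^\top=\tfrac{1}{M}\bm{V}^*\bm{W}$, and identify the minimum-norm solution with $\wh{\bm{A}}_{NM}$. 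This is more explicit about where the pseudoinverse enters and avoids the detour through the Frobenius-norm minimisation; the paper's route, in turn, makes the least-squares interpretation of \eqref{approximate Galerkin} visible.

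There is one step the paper performs that you skip. Having established $\wh{\bm{C}}_{NM}=\wh{\bm{A}}_{NM}^\top\wh{\bm{G}}_{NM}$, the paper invokes Lemma~\ref{operato lemma} to check that $\wh{\bm{A}}_{NM}$ determines a \emph{well-defined} linear operator $\wh{\Aa}_{NM}\colon\wh{\Ff}_{NM}\to\wh{\Ff}_{NM}$, i.e.\ that $\sum_i c_i\wh{\psi}_i=0$ forces $\sum_{i,k}c_i[\wh{\bm{A}}_{NM}]_{ki}\wh{\psi}_k=0$; only after this does the displayed operator identity $\wh{\Aa}_{NM}=\restr{\Pp_{\wh{\Ff}_{NM}}\wh{\Aa}}{\wh{\Ff}_{NM}}$ make sense. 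Your concluding sentence shows only that for each fixed $j$ the $j$th column of $\wh{\bm{A}}_{NM}$ furnishes coefficients for $\Pp_{\wh{\Ff}_{NM}}\wh{\Aa\psi_j}$; it does not yet argue that these columns assemble consistently into an operator on the quotient $\wh{\Ff}_{NM}$ when $\wh{\Psi}$ is linearly dependent. In your framework this amounts to checking that $\bm{V}\bm{c}=0$ implies $\bm{V}\wh{\bm{A}}_{NM}\bm{c}=0$, which you should address to match the paper's statement in full.
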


\begin{proof}
	By Assumption \ref{continuous}, with probability $1$, the matrices $\wh{\bm{C}}_{NM}$ and $\wh{\bm{G}}_{NM}$ in \eqref{approximate matrices prod}, and thus $\wh{\bm{A}}_{NM}$ in \eqref{approximate Galerkin}, are well-defined. By construction, the data-driven approximation $\wh{\bm{A}}_{NM}$ minimizes the empirical error, i.e.,
	\begin{equation} \label{gEDMD error}
		\wh{\bm{A}}_{NM} \in \argn_{\wh{\bm{A}} \in \C^{N\times N}} \norm{\wh{\bm{C}}_{NM}-{\wh{\bm{A}}}^\top \wh{\bm{G}}_{NM}}_F.
	\end{equation}
	Now, by definition, we have
	\begin{equation*}
		\restr{\Pp_{\wh{\Ff}_{NM}}\Aa^{\red{\mathrm{emp}}}}{\red{\wh{\Dd}}_{NM}}\colon\red{\wh{\Dd}}_{NM}\to \wh{\Ff}_{NM}.
	\end{equation*}
	Furthermore, using basic properties of the projection, for all $ i,j\in \{1,\dots, N\}$, we have
	\begin{equation*}
		\big[\wh{\bm{C}}_{NM}\big]_{ij} := \br{\Aa^{\red{\mathrm{emp}}} \psi^{\red{\wh{\Dd}} }_i, \psi^{\red{\wh{\Ff}} }_j}_{L^2(\wh{\mu }_M)}=\br{\restr{\Pp_{\wh{\Ff}_{NM}}\Aa^{\red{\mathrm{emp}}}}{\red{\wh{\Dd}}_{NM}}\psi_i^{\red{\wh{\Dd}} }, \psi_j^{\red{\wh{\Ff}} }}_{L^2(\wh{\mu }_M)}.
	\end{equation*}
	Equivalently, any matrix representation $\Big(\restr{\Pp_{\wh{\Ff}_{NM}}\Aa^{\red{\mathrm{emp}}}}{\red{\wh{\Dd}_{NM}}}\Big)^{\red{\Psi^{\wh{\Dd } } \to  \Psi ^{\wh{\Ff } }}} \in \C^{N\times N}$ of $\restr{\Pp_{\wh{\Ff}_{NM}}\Aa^{\red{\mathrm{emp}}}}{\red{\wh{\Dd}_{NM}}}$ in the basis $\Psi ^{\wh{\Dd } }, \Psi ^{\wh{\Ff } }$  satisfies \red{(we recall the notation introduced in Section \ref{notation section})}
	\begin{equation} \label{0 error}
		\norm{\wh{\bm{C}}_{NM} - \qty{ \Big(\restr{\Pp_{\wh{\Ff}_{NM}}\Aa^{\red{\mathrm{emp}}}}{\wh{\Dd}_{NM}}\Big)^{\wh{\Psi}}}^\top \wh{\bm{G}}_{NM}}_F=0.
	\end{equation}
	From \eqref{gEDMD error} and \eqref{0 error} we deduce that $\wh{\bm{C}}_{NM}={\wh{\bm{A}}_{NM}}^\top \wh{\bm{G}}_{NM}$. That is,
	\begin{equation} \label{equality}
		\br{\Aa^{\red{\mathrm{emp}}} \psi^{\red{\wh{\Dd}} }_i, \psi^{\red{\wh{\Ff}} }_j}_{L^2(\wh{\mu }_M)} = \big[\wh{\bm{A}}_{NM}^\top \wh{\bm{G}}_{NM}\big]_{ij}, \quad\forall i,j\in\set{1, \dots,N}.
	\end{equation}
	Define the data-driven operator on the empirical space $\wh{\Aa}_{NM}^{\red{\mathrm{emp}}} \colon \wh{\Dd}_{NM} \to \wh{\Ff}_{NM}$ as
	\begin{equation*}
		\wh{\Aa}_{NM}^{\red{\mathrm{emp}}} v := \sum_{i,k=1}^N c_i \big[\wh{\bm{A}}_{NM}\big]_{ki} \psi_k^{\red{\wh{\Dd } }}, \quad\forall v = \sum_{i=1}^Nc _i \psi_i ^{\red{\wh{\Dd } }}\in \wh{\Dd}_{NM} .
	\end{equation*}
	We use Lemma \ref{operato lemma} to see that $\wh{\Aa}_{NM}^{\red{\mathrm{emp}}}$ is well-defined. To this end, consider $(c_1, \dots, c_N) \in \C^N$ such that $\sum_{i=1}^N c_i \psi_i^{\red{\wh{\Dd } }} = 0$. Then, for $j\in\set{1, \dots, N}$, we have
	\begin{align*}
		&\sum_{i,k=1}^N \br{c_i \big[\wh{\bm{A}}_{NM}\big]_{ki} \psi_k^{\red{\wh{\Dd } }}, \psi_j^{\red{\wh{\Ff } }}}_{L^2(\wh{\mu }_M)} = \sum_{i,k=1}^N c_i \big[\wh{\bm{A}}_{NM}\big]_{ki}\big[\wh{\bm{G}}_{NM}\big]_{kj} =\sum_{i=1}^Nc_i[\wh{\bm{A}}_{NM}^\top \wh{\bm{G}}_{NM}]_{ij}
		\\&=\sum_{i=1}^Nc_i \br{\Aa^{\red{\mathrm{emp}}} \psi_i^{\red{\wh{\Dd } }}, \psi_j^{\red{\wh{\Ff } }}}_{L^2(\wh{\mu }_M)}                                                                                 =\br{\Aa^{\red{\mathrm{emp}}} \sum_{i=1}^Nc_i \psi_i^{\red{\wh{\Dd } }}, \psi_j^{\red{\wh{\Ff } }}}_{L^2(\wh{\mu}_M)}=0,
	\end{align*}
	where in the first equality, we used the definition of $\wh{\bm{G}}_{NM}$, in the third, we used \eqref{equality}, and in the last, we used that $\sum_{i=1}^Nc_i\psi_i^{\red{\wh{\Dd } }}=0$. Since $j$ was arbitrary, $\wh{\Aa}_{NM}^{\red{\mathrm{emp}}}$ is a well-defined operator with matrix representation $\wh{\bm{A}}_{NM}$ by Lemma \ref{operato lemma}. Furthermore, by construction of $\wh{\Aa}_{NM}^{\red{\mathrm{emp}}}$  and \eqref{equality}, $\wh{\Aa}_{NM}^{\red{\mathrm{emp}}}$ satisfies
	\begin{equation} \label{equality 2}
		\br{\Aa^{\red{\mathrm{emp}}}\psi_i^{\red{\wh{\Dd } }}, \psi_j^{\red{\wh{\Ff } }}}_{L^2(\wh{\mu }_M)}=\br{\wh{\Aa}_{NM}^{\red{\mathrm{emp}}}\psi_i^{\red{\wh{\Dd } }}, \psi_j^{\red{\wh{\Ff } }}}_{L^2(\wh{\mu }_M)}, \quad\forall i,j=1,\dots,N.
	\end{equation}
	Due to the general theory of Hilbert spaces, the only operator $\wh{\Aa}_{NM}^{\red{\mathrm{emp}}}\colon\red{\wh{\Dd}}_{NM} \to \wh{\Ff}_{NM}$ satisfying \eqref{equality 2} is $\restr{\Pp_{\wh{\Ff}_{NM}}\Aa^{\red{\mathrm{emp}}}}{\red{\wh{\Dd}}_{NM}}$. As a result, $\wh{\Aa}_{NM}^{\red{\mathrm{emp}}}=\restr{\Pp_{\wh{\Ff}_{NM}}\Aa^{\red{\mathrm{emp}}}}{\red{\wh{\Dd}}_{NM}}$, which completes the proof.
\end{proof}
We now show that if $\Ff _N$ is invariant under the action of $\Aa$, then $\wh{\Aa}_{NM}^{\red{\mathrm{emp}}}$ is an exact approximation of the Galerkin projection $\Aa_N$ defined in \eqref{Galerkin operator} of $\Aa $  with probability $1$. \red{As in \eqref{induced operator}, we define
\begin{align*}
	\Aa _N^{\red{\mathrm{emp}}} : \wh{\Dd }_{NM} \to \wh{\Ff }_{NM}, \quad  \psi^{\wh{\Dd } } \to (\Aa_N \psi)^{\wh{\Ff } }.
\end{align*}

}

\begin{corollary}[Exact approximation] \label{exact approximation}
	If  $\Aa \Ff _N \subset \Ff_N$, then, with probability $1$,
	\begin{equation*}
		\wh{\Aa }_{NM}^{\red{\mathrm{emp}}} = \Aa_N^{\red{\mathrm{emp}}}.
	\end{equation*}
	Let $\bm{A}_N^{\Psi}$ be the matrix of $\Aa_N=\restr{\Aa }{\Ff_N}$ in the basis $\Psi$. If additionally, $\wh{\bm{G}}_{NM}$ is invertible, then with the notation of Section \ref{notation section}
	\begin{equation*}
		\wh{\bm{A}}_{NM}=\bm{A}_N^{\Psi}.
	\end{equation*}
\end{corollary}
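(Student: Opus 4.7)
The plan is to combine the invariance hypothesis with the characterisation of $\wh{\Aa}_{NM}$ as an empirical projection given by Theorem \ref{projection theorem}. Since $\Aa \Ff_N \subset \Ff_N$, for every $i$ we have $\Aa \psi_i \in \Ff_N$, so its image $\wh{\Aa \psi_i}$ under the identification \eqref{identification} lies in $\wh{\Ff}_{NM} = \mspan(\wh{\Psi})$. By definition, $\wh{\Aa}\wh{\psi}_i = \wh{\Aa \psi_i}$, so $\wh{\Aa}\wh{\psi}_i \in \wh{\Ff}_{NM}$ for each $i$, meaning that the projection $\Pp_{\wh{\Ff}_{NM}}$ acts as the identity on $\wh{\Aa}\wh{\Ff}_{NM}$.

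Applying Theorem \ref{projection theorem}, with probability $1$, for any $i\in\{1,\dots,N\}$,
\begin{equation*}
    \wh{\Aa}_{NM}\wh{\psi}_i \;=\; \Pp_{\wh{\Ff}_{NM}}\wh{\Aa}\,\wh{\psi}_i \;=\; \wh{\Aa}\,\wh{\psi}_i \;=\; \wh{\Aa \psi_i}.
\end{equation*}
Because of the invariance, $\Aa \psi_i = \Aa_N \psi_i$, so $\wh{\Aa \psi_i} = \wh{\Aa_N \psi_i} = \wh{\Aa_N}\wh{\psi}_i$. Thus $\wh{\Aa}_{NM}$ and $\wh{\Aa_N}$ agree on the spanning set $\wh{\Psi}$ of $\wh{\Ff}_{NM}$, and hence agree as operators on $\wh{\Ff}_{NM}$, which establishes the first claim.

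For the second claim, assume in addition that $\wh{\bm{G}}_{NM}$ is invertible. Then $\wh{\Psi}$ is linearly independent, so the linear map $\psi_i \mapsto \wh{\psi}_i$ extends to a vector space isomorphism $\Ff_N \iso \wh{\Ff}_{NM}$ that sends the basis $\Psi$ to the basis $\wh{\Psi}$. Under this correspondence, $\Aa_N\colon \Ff_N \to \Ff_N$ has matrix $\bm{A}_N^\Psi$ in the basis $\Psi$, while $\wh{\Aa_N} = \wh{\Aa}_{NM}\colon \wh{\Ff}_{NM} \to \wh{\Ff}_{NM}$ has matrix $\wh{\bm{A}}_{NM}$ in the basis $\wh{\Psi}$. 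Since the two operators are intertwined by the isomorphism and the bases are identified accordingly, their matrix representations must coincide, i.e., $\wh{\bm{A}}_{NM} = \bm{A}_N^\Psi$.

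There is really no serious obstacle: the statement is a direct consequence of Theorem \ref{projection theorem} combined with the observation that the projection is trivial on an $\Aa$-invariant subspace. The only point requiring some care is the bookkeeping around the (possibly non-injective) identification $\psi \mapsto \wh{\psi}$, which is what forces the invertibility assumption on $\wh{\bm{G}}_{NM}$ in the second part in order to promote an operator identity into an equality of matrix representations.
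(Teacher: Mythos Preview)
Your proof is correct and follows essentially the same route as the paper: both arguments invoke Theorem~\ref{projection theorem} and use the invariance $\Aa\Ff_N\subset\Ff_N$ to see that $\wh{\Aa}$ already maps $\wh{\Ff}_{NM}$ into itself, making the projection $\Pp_{\wh{\Ff}_{NM}}$ trivial, and then identify matrix representations via the isomorphism $\Ff_N\cong\wh{\Ff}_{NM}$ once $\wh{\bm{G}}_{NM}$ is invertible. The only cosmetic difference is that the paper substitutes $\Aa_N$ for $\Aa$ in Theorem~\ref{projection theorem} and reads off the conclusion in one line, whereas you verify agreement on each generator $\wh{\psi}_i$ explicitly.
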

\begin{proof}
	Taking $\Aa _N= \restr{\Aa }{\Ff_N}$  in place of $\Aa$ in Theorem \ref{projection theorem} and using that $\Aa_N^{\red{\mathrm{emp}}} \colon \wh{\Dd}_{NM}\to \wh{\Ff}_{NM}$,
	\begin{equation*}
		\wh{\Aa}_{NM}^{\red{\mathrm{emp}}} =\wh{\Aa_N}_{NM}=  \restr{\Pp_{\wh{\Ff}_{NM}}\Aa_N^{\red{\mathrm{emp}}}}{\wh{\Ff}_{NM}} = \Aa_N^{\red{\mathrm{emp}}}.
	\end{equation*}
	This proves the first part of the corollary. To see the second, note that if $\wh{\bm{G}}_{NM}$ is invertible, then $\Psi^{\wh{\Dd }},\Psi^{\wh{\Ff }}$ form a basis of $\wh{\Dd }_{NM}$ and $\wh{\Ff }_{NM}$ respectively and matrix representations of operators from $\wh{\Dd}_{NM}$ to $\wh{\Ff }_{NM}$  and endomorphisms of $\Ff_N$ are the same. As a result, $\wh{\bm{A}}_{NM}=\bm{A}^\Psi_N$. This completes the proof.
\end{proof}

Theorem \ref{projection theorem} is a generalisation of Theorem 1 in \cite{korda_convergence_2018}, where it was also required that $\Psi^{\red{\wh{\Ff } }}$ be linearly independent in $\wh{\Ff}_M$ (or equivalently, that $\wh{\bm{G}}_{NM}$ be invertible). However, this assumption is not altogether benign. The assumption will always fail when $M< N$ (as $\wh{\Ff}_M$ has dimension $M$) and also in many cases of interest, such as the following example.

\begin{example}[Finite element basis of degree $k$] \label{FEM example}
	We want to approximate $\mathcal{F}$ by the space of piecewise polynomials of degree up to $k$. That is, we decompose $\X$ into disjoint subdomains $\set{T_1,\dots,T_L}$, i.e.,
	\begin{equation*}
		\X=\bigsqcup_{l=1}^L T_l,
	\end{equation*}
	and take
	\begin{equation*}
		\mathcal{F}_{N}=\left\{v \in C(\X), ~ \restr{v}{T_l} \in P_k, \forall l \in \set{1,\dots,L}\right\},
	\end{equation*}
	where $P_k$ is the set of polynomials of degree up to $k$ in each variable. This space has dimension $N=Lk^d$.
	A basis $\Psi$ can be obtained by taking $k^d$ nodes in each $T_l$ to obtain a total of $N$ nodes $\{\bm{a}_1,\dots,\bm{a}_N\}$ and taking the unique functions $\Psi=\set{\psi_n}_{n=1}^N\subset \Ff_N$ such that
	\begin{equation*}
		\psi_i(\bm{a}_j)=\delta_{ij}, \quad\forall i,j\in \set{1,\dots,N}.
	\end{equation*}
	Here, typically, $L$ is chosen so that each $T_l$ has a diameter at most $h$. In this case, $L=\mathcal{O}(h^{-d})$ and $N=\mathcal{O}\qty((k/h)^{d})$. \exampleSymbol
\end{example}

In the above example, if there is a cell $T_{l_0}$ which holds no data point, then $\wh{\psi}_{l_0}=0$. As a result, $\wh{\Psi}$ will not be linearly independent in general.
We now study the convergence of $\wh{\Aa}_{NM}$ to the projection of $\Aa$ onto $\Ff_N$. This convergence uses the law of large numbers and requires that the empirical matrices have finite variances.
\begin{assumption} \label{moments assumption}
	The basis functions $\Psi=\set{\psi_n}_{n=1}^N$ satisfy $\psi_j\Aa\psi_i\in \Ff$ as well as $\psi_i^2\in \Ff$ for all $i,j\in \set{1,\dots,N}$.
\end{assumption}
For example, the above assumption will hold by Assumption \ref{assumption} if $\X$ is a compact domain. Though $\wh{\Psi}$ are not linearly independent in general, they will be (almost surely) when we have a large enough training data set. This is shown in the following lemma.

\begin{lemma}
	\label{invertiblelemma}
	Suppose Assumptions \ref{assumption} and \ref{moments assumption} hold, then there exists an $\mathbb{N}$-valued random variable $m_0$ such that $\Psi^{\red{\wh{\Ff }} }\subset\wh{\Ff}_M$ are almost surely linearly independent for all $M\ge m_0$.
\end{lemma}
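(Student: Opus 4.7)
The plan is to reformulate linear independence of $\wh{\Psi}$ as invertibility of the empirical Gram matrix $\wh{\bm{G}}_{NM}$, and then appeal to the strong law of large numbers together with continuity of the determinant. Concretely, $\wh{\psi}_1,\dots,\wh{\psi}_N$ are linearly independent in the Hilbert space $\wh{\Ff}_M$ if and only if $\det(\wh{\bm{G}}_{NM}) \neq 0$, so the whole problem reduces to a statement about one scalar function of the empirical Gram matrix. The target matrix $\bm{G}_N$ is the Gram matrix of $\Psi$ in $L^2(\mu)$, and since $\Psi$ is linearly independent by Assumption \ref{a1}, $\bm{G}_N$ is strictly positive definite; in particular $\det(\bm{G}_N) > 0$.

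Next I would apply the strong law of large numbers entrywise to the Monte Carlo estimate. By Assumption \ref{a3}, $\{\psi_i(\bm{x}_m)\overline{\psi_j(\bm{x}_m)}\}_{m\ge 1}$ is an i.i.d.\ sequence; the Cauchy--Schwarz inequality together with $\psi_i,\psi_j \in \Ff = L^2(\mu)$ (whose integrability is in any case strengthened by Assumption \ref{moments assumption}) ensures that $\psi_i \overline{\psi_j} \in L^1(\mu)$. Hence
\[
	[\wh{\bm{G}}_{NM}]_{ij} \xrightarrow{\text{a.s.}} \E[\psi_i(\bm{x})\overline{\psi_j(\bm{x})}] = [\bm{G}_N]_{ij}.
\]
Intersecting the $N^2$ full-probability events (one per pair $(i,j)$) yields $\wh{\bm{G}}_{NM} \to \bm{G}_N$ entrywise almost surely, and continuity of the determinant then gives $\det(\wh{\bm{G}}_{NM}) \to \det(\bm{G}_N) > 0$ almost surely.

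Finally, I would define
\[
	m_0(\omega) := \inf\Big\{m \in \N : \det(\wh{\bm{G}}_{NM}(\omega)) > \tfrac{1}{2}\det(\bm{G}_N) \text{ for every } M \ge m\Big\},
\]
with the convention $m_0 = 1$ on the $\PP$-null set where the above infimum is infinite. The almost sure convergence established above guarantees $m_0 < \infty$ on a full-probability event, so $m_0$ is a bona fide $\N$-valued random variable, and by construction $\wh{\bm{G}}_{NM}$ is invertible, hence $\wh{\Psi}$ is linearly independent in $\wh{\Ff}_M$, for every $M \ge m_0$. No substantive obstacle arises; the only delicate point is packaging the pointwise a.s.\ convergence into a single random threshold that works simultaneously for all $M \ge m_0$, which is handled by taking the uniform lower bound $\tfrac{1}{2}\det(\bm{G}_N)$ in the definition above rather than merely asking $\det(\wh{\bm{G}}_{NM}) \neq 0$ at a single index.
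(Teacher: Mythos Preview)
Your proposal is correct and follows essentially the same approach as the paper: entrywise strong law of large numbers on $\wh{\bm{G}}_{NM}$, continuity of the determinant, and positivity of $\det(\bm{G}_N)$ from linear independence of $\Psi$. Your write-up is in fact slightly more careful than the paper's, in that you make explicit the equivalence between linear independence of $\wh{\Psi}$ and invertibility of $\wh{\bm{G}}_{NM}$, and you give an explicit measurable definition of $m_0$ (with the null-set convention) rather than just asserting its existence.
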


\begin{proof}
	By Assumption \ref{assumption}, we may apply the strong law of large numbers to deduce that, for all $i,j\in \{1,\dots, N\}$, when $M\to\infty$
	\begin{equation*}
		\big[\wh{\bm{G}}_{NM}\big]_{ij} = \frac{1}{M}\sum_{m=1}^{M} \psi_i(\bm{x}_m) \overline{\psi_j(\bm{x}_m)}~\overset{a.s.}{\to}~ \int \psi_i(\bm{x}) \overline{\psi_j(\bm{x})} \mathrm{d} \mu(\bm{x})=\big[{\bm{G}}_{N}\big]_{ij}.
	\end{equation*}
	Since the determinant is a continuous function and $\bm{G}_N $ is the Gram matrix of a linearly independent set of functions, we have
	\begin{equation*}
		\operatorname{det}({\wh{\bm{G}}_{NM}})\to\operatorname{det}({{\bm{G}}_{N}})\ne 0\quad\text{a.s.\ for $M\to\infty$},
	\end{equation*}
	from which it follows that there exists $m_0\in\mathbb{N}$ such that
	\begin{equation*}
		\abs{\operatorname{det}({\wh{\bm{G}}_{NM}})-\operatorname{det}(\bm{G}_{N})}<\abs{ \operatorname{det}({\bm{G}}_{N})}\quad\text{a.s.\ when }M\ge m_0.
	\end{equation*}
	Consequently,
	\begin{equation*}
		\operatorname{det}({\wh{\bm{G}}_{NM}})\ne 0\quad\text{a.s.\ when } M\ge m_0,
	\end{equation*}
	which proves the result as $\wh{\bm{G}}_{NM}$ is the Gram matrix of $\Psi ^{\wh{\Ff } }$ in $\wh{\Ff }_{M}$.  
\end{proof}

Theorem 3.2 establishes that $\wh{\bm{A}}_{NM}$ represents a projection operator, $\wh{\Aa}_{NM}^{\mathrm{emp}}$, in the discrete space of the data samples. This guarantees our approximation is optimal with respect to the given data.

\red{For convergence and error analysis, however, we must compare our approximation to the true operator $\Aa_N$, which acts on the continuous space $\Ff_N$. We therefore use the empirically derived matrix to define our primary object of analysis.

\begin{definition}
We define the \emph{data-driven operator} $\wh{\Aa }_{NM}\colon \Ff _N \to \Ff _N$ as the unique linear operator whose matrix representation with respect to the basis $\Psi \subset \Ff_N $ is $\wh{\bm{A}}_{NM}$ as defined in \eqref{approximate Galerkin}.
\end{definition}

This definition is our object of interest for the remainder of this paper and is valid for all $M$ and $N$. We now connect it back to the empirical setting.}

\begin{observation}
	\label{ObservationIdentification}
	Lemma \ref{invertiblelemma} shows that, for sufficiently large $M$, \red{$\Psi^{\wh{\Ff }}$} forms a basis of $\wh{\Ff}_{NM}$. As a result, for large $M$,  ${\Ff}_{N}$ is isomorphic to $\wh{\Ff}_{NM}$ and $\wh{\Dd}_{NM}$ through the mapping \eqref{identification 0} and \eqref{identification} respectively. Under this isomorphism, the data-driven operator $\wh{\Aa}_{NM}$ is the continuous-space counterpart to the empirical projection operator $\wh{\Aa}_{NM}^{\mathrm{emp}}$.
\end{observation}

Using Lemma \ref{invertiblelemma}, we now prove that in the infinite sample limit, the data-driven operator $\wh{\Aa}_{NM}$ converges to $\Aa_N$, i.e., the Galerkin projection of $\Aa$ onto $\Ff_N$.

\begin{theorem}[Convergence in data limit] \label{convergence to projection theorem}
	If Assumptions \ref{assumption} and \ref{moments assumption} hold, almost surely,
	\begin{equation*}
		\lim_{M\to\infty}{\wh{\bm{A}}_{NM}}=\bm{A}_N^{{\Psi}},
	\end{equation*}
	where $\bm{A}_N^{{\Psi}}$ is the matrix  of $\Aa_N$ with respect to $\Psi$.
\end{theorem}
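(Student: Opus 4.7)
The plan is to combine the strong law of large numbers with the continuity of matrix inversion at invertible matrices, using Lemma \ref{invertiblelemma} to sidestep the well-known discontinuity of the Moore--Penrose pseudoinverse.

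First I would apply the strong law of large numbers entrywise to the empirical matrices defined in \eqref{approximate matrices}. Assumption \ref{moments assumption} guarantees that $\psi_i^2$ and $\psi_j \Aa \psi_i$ lie in $\Ff = L^2(\X,\mu)$, hence in particular in $L^1(\X,\mu)$, so the coordinates $[\wh{\bm{G}}_{NM}]_{ij}$ and $[\wh{\bm{C}}_{NM}]_{ij}$ are empirical means of integrable random variables. Using Assumption \ref{a3} (i.i.d.\ samples), this yields, almost surely,
\begin{equation*}
    \lim_{M\to\infty}\wh{\bm{G}}_{NM} = \bm{G}_N, \qquad \lim_{M\to\infty}\wh{\bm{C}}_{NM} = \bm{C}_N,
\end{equation*}
where convergence can be taken, say, in Frobenius norm since there are only finitely many entries.

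Next I would use Lemma \ref{invertiblelemma} to replace $\wh{\bm{G}}_{NM}^+$ by $\wh{\bm{G}}_{NM}^{-1}$ for all sufficiently large $M$. By that lemma, there exists an almost surely finite random index $m_0$ such that $\wh{\bm{G}}_{NM}$ is invertible for every $M \geq m_0$; on that event, the defining property of the Moore--Penrose pseudoinverse forces $\wh{\bm{G}}_{NM}^+ = \wh{\bm{G}}_{NM}^{-1}$. Since $\bm{G}_N$ is invertible by Assumption \ref{a1} and matrix inversion is continuous on the open set of invertible matrices, the first step gives, almost surely,
\begin{equation*}
    \lim_{M\to\infty}\wh{\bm{G}}_{NM}^+ = \lim_{M\to\infty}\wh{\bm{G}}_{NM}^{-1} = \bm{G}_N^{-1}.
\end{equation*}

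Combining this with the convergence of $\wh{\bm{C}}_{NM}$ and recalling the definition \eqref{approximate Galerkin} together with the exact identity \eqref{exact Galerkin}, I obtain, almost surely,
\begin{equation*}
    \lim_{M\to\infty}\wh{\bm{A}}_{NM}^\top = \lim_{M\to\infty}\wh{\bm{C}}_{NM}\wh{\bm{G}}_{NM}^+ = \bm{C}_N \bm{G}_N^{-1} = (\bm{A}_N^\Psi)^\top,
\end{equation*}
and transposing concludes the proof. The only delicate point is the pseudoinverse step: without Lemma \ref{invertiblelemma} one could not pass to the limit through $\wh{\bm{G}}_{NM}^+$, since the pseudoinverse is discontinuous precisely where rank changes, and for finite $M$ the matrix $\wh{\bm{G}}_{NM}$ may well be singular (e.g., whenever $M < N$). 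Lemma \ref{invertiblelemma} ensures this bad regime is eventually escaped on a probability-one event, which is what makes the otherwise straightforward argument go through.
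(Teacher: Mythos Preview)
Your proposal is correct and follows essentially the same route as the paper: apply the strong law of large numbers entrywise to $\wh{\bm{C}}_{NM}$ and $\wh{\bm{G}}_{NM}$, invoke Lemma~\ref{invertiblelemma} to replace the pseudoinverse by the ordinary inverse for all $M\ge m_0$, and then use continuity of matrix inversion together with \eqref{exact Galerkin} to conclude. Your write-up is in fact more detailed than the paper's, which compresses all of this into a single displayed line.
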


\begin{proof}
	Let $m_0$ be as in Lemma \ref{invertiblelemma}. For $M>m_0$, since $\wh{\bm{G}}_{NM}$ is invertible, its pseudoinverse is equal to its inverse, and, by the strong law of large numbers, we have
	\begin{equation*}
		\qty({\wh{\bm{A}}_{NM}})^\top=\wh{\bm{C}}_{NM}\wh{\bm{G}}_{NM}^{-1}\overset{a.s.}{\to}\bm{C}_N \bm{G}_N^{-1}=\qty(\bm{A}_N^{{\Psi}})^\top\quad\text{for } M\to\infty.
	\end{equation*}
	In other words, the matrix of $\wh{\Aa}_{NM}$ converges to that of $\Aa_N$.
\end{proof}

\begin{corollary} \label{convergence data corollary}
	If Assumptions \ref{assumption} and \ref{moments assumption} hold, then, with probability $1$,
	\begin{equation*}
		\lim _{M \rightarrow \infty}\left\|\mathcal{\wh{A}}_{NM} - {\Aa_N }\right\|=0,
	\end{equation*}
	where $\|\cdot\|$ is the operator norm. In particular, for all $\psi \in \Ff_N$ and with probability $1$
	\begin{equation*}
		\lim _{M \rightarrow \infty}\left\|\mathcal{\wh{A}}_{NM}\psi -{\Aa_N \psi}\right\|_{\Ff_N}=0,
	\end{equation*}
	where $\norm{\cdot }_{\Ff_N}$ is any norm on $\Ff_N$.
\end{corollary}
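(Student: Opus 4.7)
The plan is to reduce the operator-norm convergence to the matrix-level convergence that we have already established in Theorem \ref{convergence to projection theorem}, exploiting the fact that $\Ff_N$ is finite-dimensional.

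First I would invoke Lemma \ref{invertiblelemma} and Observation \ref{ObservationIdentification} to fix a (random but almost surely finite) index $m_0$ such that for all $M \geq m_0$ the family $\wh{\Psi}$ is a basis of $\wh{\Ff}_{NM}$. Via the identification \eqref{identification}, this lets us view $\wh{\Aa}_{NM}$ as a genuine linear operator $\Ff_N \to \Ff_N$ whose matrix in the basis $\Psi$ equals $\wh{\bm{A}}_{NM}$. From this point on the problem lives entirely on the single, fixed, finite-dimensional space $\Ff_N$.

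Next, equip $\Ff_N$ with the norm $\|\cdot\|_\Ff$ (or equivalently, by equivalence of norms in finite dimensions, any other norm). Writing $\bm{M}_N$ for the (fixed, invertible) map $\C^N \to \Ff_N$ sending a coefficient vector $\bm{c}$ to $\sum_{n=1}^N c_n \psi_n$, one has for any operator $\Tt$ on $\Ff_N$ with matrix $\bm{T}$ in basis $\Psi$ the identity
\begin{equation*}
    \|\Tt\| = \sup_{\bm{c} \neq 0} \frac{\|\bm{M}_N \bm{T} \bm{c}\|_\Ff}{\|\bm{M}_N \bm{c}\|_\Ff} \leq \|\bm{M}_N\|\,\|\bm{M}_N^{-1}\|\,\|\bm{T}\|.
\end{equation*}
Applying this with $\Tt = \wh{\Aa}_{NM} - \Aa_N$ and $\bm{T} = \wh{\bm{A}}_{NM} - \bm{A}_N^\Psi$ gives
\begin{equation*}
    \|\wh{\Aa}_{NM} - \Aa_N\| \leq \|\bm{M}_N\|\,\|\bm{M}_N^{-1}\|\,\|\wh{\bm{A}}_{NM} - \bm{A}_N^\Psi\|.
\end{equation*}
Theorem \ref{convergence to projection theorem} asserts that the right-hand side tends to $0$ almost surely, which yields the first claim.

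For the second statement, note that on the finite-dimensional space $\Ff_N$ any two norms are equivalent, so there exists a constant $C>0$ (depending on the choice of norm $\|\cdot\|_{\Ff_N}$) with $\|\phi\|_{\Ff_N} \leq C\|\phi\|_\Ff$ for all $\phi \in \Ff_N$. Hence, for any fixed $\psi \in \Ff_N$,
\begin{equation*}
    \|\wh{\Aa}_{NM}\psi - \Aa_N\psi\|_{\Ff_N} \leq C\,\|\wh{\Aa}_{NM} - \Aa_N\|\,\|\psi\|_\Ff,
\end{equation*}
which converges to $0$ almost surely by the first part. No genuine obstacle is expected: the only subtlety is the almost-sure well-definedness of $\wh{\Aa}_{NM}$ as an operator on $\Ff_N$, which is precisely what Lemma \ref{invertiblelemma} provides, and everything else is routine once we are on a fixed finite-dimensional space.
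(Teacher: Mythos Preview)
Your proposal is correct and follows essentially the same approach as the paper: deduce operator-norm convergence from the matrix convergence of Theorem \ref{convergence to projection theorem} via the finite-dimensionality of $\Ff_N$, then obtain pointwise convergence in any norm by equivalence of norms. The paper's proof is simply a terser version of what you wrote, stating these two steps without spelling out the explicit bound through the coordinate map $\bm{M}_N$.
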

\begin{proof}
	The convergence in the operator norm is a direct consequence of the convergence of the matrix representations of Theorem \ref{convergence to projection theorem}. The second point follows from the fact that convergence in the operator norm implies pointwise convergence and that all norms on finite-dimensional spaces are equivalent.
\end{proof}

\section{Convergence of the projections} \label{convergence Galerkin section}

In the previous section, we have shown that the data-driven approximation in \eqref{approximate Galerkin} defines an operator $\wh{\Aa}_{NM}$ that converges to the Galerkin projection $\Aa_N=\restr{ \Pp_{\Ff_N} \Aa}{\Ff_N}$. In this section, our goal is to show that $\Aa_N$ converges to $\Aa$. This was also done in \cite{korda_convergence_2018}, where it was assumed that the (Koopman) operator is bounded on $\Ff$ and that $\set{\psi_n}_{n=1}^\infty$ form an orthonormal basis of $\Ff$. In many cases, however, $\Aa$ will \emph{not} be bounded (for example, if $\Aa$ is the generator $\Ll$ of the Koopman operator). Furthermore, the requirement that the basis be orthonormal is restrictive, and in practice, it is often preferable to work with a dictionary that does not have to be orthonormalised; a finite element basis, for instance, will often produce sparse operators, but it is not orthonormal. Additionally, the sampling measure $\mu $ is typically unknown, so it is not possible to orthonormalise with respect to the norm on $\Ff $.

We will work in a more general setting. Firstly, we require that $\mathcal{F}_N$ converges to $\Ff$ as $N \rightarrow \infty$. That is, as we take more basis functions, we fill $\Ff$.

\begin{assumption} \label{projection assumption}
	Assumption \ref{assumption} holds and
	\begin{equation*}
		\lim_{N \to \infty} \norm{\Pp_{\Ff_N} \phi - \phi}_{\Ff} =0 , \quad\forall \phi \in \Ff,
	\end{equation*}
	where $\Pp_{\Ff_N}$ is the projection of $\Ff$ onto $\Ff_N$ using the inner product on $\Ff$.
\end{assumption}

We will also need to approximate functions in the domain $\Dd$. This necessitates the following assumption.

\begin{assumption} \label{projection operator assumption}
	Assumption \ref{assumption} holds, $\Aa$ is a closed operator, and
	\begin{equation*}
		\lim_{N \to \infty} \norm{\Pp_{\Dd_N} f - f}_{\Dd} = 0 , \quad\forall f \in \Dd.
	\end{equation*}
	Here, $\Pp_{\Dd_N}$ is the projection of $\Dd$ onto $\Ff_N$ using the inner product on $\Dd$.
\end{assumption}

If $\Aa$ is bounded on $\Ff$, then $\Ff=\Dd$ and Assumptions \ref{projection assumption} and \ref{projection operator assumption} are equivalent. In general, though, this is not the case. One assumption may imply that functions are approximated well in $L^2(\X, \mu)$ and the other in $H^r(\X, \mu)$.

\begin{example}[Orthonormal basis]
Let $\set{\psi_n}_{n=1}^\infty$ be an orthonormal basis of $\Ff$ and set $\Psi_N := \set{\psi_{1}, \dots, \psi_{N}} $. Given $f(x) = \sum_{n=1}^{\infty} c_n \psi_n(x) \in \Ff$,
\begin{equation*}
    \norm{\Pp_{\Ff_N} \phi - \phi}_{\Ff}^2=\sum_{n=N+1}^\infty c_n^2 \to 0 \quad (N \rightarrow \infty).
\end{equation*}
If $\Aa$ is continuous on $\Ff$, then $\Dd=\Ff$ so that Assumption~\ref{projection assumption} holds. \exampleSymbol
\end{example}

\red{\begin{example}
Consider the dynamical system in Example~\ref{SDE example} on $\X=[0,1]^d$ with the Lebesgue measure, then $\Dd = H^2(\X)\cap H_0^1(\X)$. Examples of dense bases in $\Dd $:
\begin{itemize}
	\item Cutoff Gaussians $\psi(\bm{x})=\exp\qty(-{\norm{\bm{x}-\bm{p}}^2}/{2 \theta^2})\prod_{i=1}^d x_i\left(1-x_i\right)$, where $\b{p}, \theta$ varies over a dense set in $\X$ and $\theta >0$ is fixed. 
	\item FEM basis functions (e.g., Lagrange) of higher order.
\end{itemize}
\exampleSymbol
\end{example}

To see an example motivated by spectral approximation of the Koopman operator when $\mu $ is invariant, see \cite[Corollary 11]{valva2024physics}.}

The above assumptions only require that the subspaces $\Ff_N$ are good approximations of $\Ff$ and~$\Dd$, with the error vanishing in the limit. We now introduce the notation
\begin{equation*}
	\Ff_\infty := \bigcup_{n=1}^\infty \Ff_n.
\end{equation*}
Under the above assumptions, we can prove the convergence of the Galerkin approximation.

\begin{theorem}[Convergence in dictionary limit] \label{convergence of projection theorem}
	Let $\Psi_N$ satisfy Assumption \ref{projection assumption}, then
	\begin{equation*}
		\lim_{N \to \infty}\norm{\Aa_N\Pp_{\Dd_N} \psi- \Aa \psi}_\Ff=0 , \quad\forall \psi \in \Ff_\infty .
	\end{equation*}
	If additionally Assumption \ref{projection operator assumption} holds, then
	\begin{equation*}
		\lim_{N \to \infty}\norm{\Aa_N \Pp_{\Dd_N} f- \Aa f}_\Ff=0 , \quad\forall f \in \Dd.
	\end{equation*}
\end{theorem}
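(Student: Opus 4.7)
The plan is to handle the two statements separately, with the first being a nearly immediate consequence of Assumption \ref{projection assumption} and the second requiring a standard add-and-subtract decomposition that exploits continuity of $\Aa \colon \Dd \to \Ff$.

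\textbf{First statement.} For $\psi \in \Ff_\infty$ there exists $N_0$ with $\psi \in \Ff_{N_0}$. Since any projection onto a subspace (irrespective of which inner product generates it) fixes elements already in that subspace, for all $N \ge N_0$ we have $\Pp_{\Dd_N}\psi = \psi$. By the Galerkin characterisation \eqref{Galerkin operator} of $\Aa_N$, this gives
\begin{equation*}
\Aa_N \Pp_{\Dd_N}\psi \;=\; \Aa_N \psi \;=\; \Pp_{\Ff_N}\Aa\psi.
\end{equation*}
Since $\psi \in \Ff_{N_0} \subset \Dd$ (Assumption \ref{a1}), the element $\Aa\psi$ lies in $\Ff$, and Assumption \ref{projection assumption} applied to $\Aa\psi$ yields $\Pp_{\Ff_N}\Aa\psi \to \Aa\psi$ in $\Ff$.

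\textbf{Second statement.} For arbitrary $f \in \Dd$, I would use the decomposition
\begin{equation*}
\Aa_N \Pp_{\Dd_N} f - \Aa f \;=\; \Pp_{\Ff_N}\Aa\qp{\Pp_{\Dd_N} f - f} \;+\; \qp{\Pp_{\Ff_N} - \rm{Id}}\Aa f,
\end{equation*}
which follows immediately from $\Aa_N \Pp_{\Dd_N} f = \Pp_{\Ff_N}\Aa\Pp_{\Dd_N} f$. Since $\Pp_{\Ff_N}$ is an orthogonal projection on $\Ff$ (so $\norm{\Pp_{\Ff_N}}\le 1$) and $\Aa \colon \Dd\to\Ff$ is continuous with $\norm{\Aa}\le 1$ (as noted directly below the definition of $\norm{\cdot}_\Dd$), the triangle inequality gives
\begin{equation*}
\norm{\Aa_N\Pp_{\Dd_N}f - \Aa f}_\Ff \;\le\; \norm{\Pp_{\Dd_N} f - f}_\Dd \;+\; \norm{\qp{\Pp_{\Ff_N} - \rm{Id}}\Aa f}_\Ff.
\end{equation*}
The first summand tends to $0$ by Assumption \ref{projection operator assumption}, and the second tends to $0$ by Assumption \ref{projection assumption} applied to $\Aa f \in \Ff$.

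\textbf{Main obstacle.} There is no real technical obstacle: both statements rest on recognising the correct projection identity $\Aa_N\Pp_{\Dd_N} = \Pp_{\Ff_N}\Aa\Pp_{\Dd_N}$ and on exploiting the fact that, although $\Aa$ may be unbounded on $\Ff$, it is automatically a contraction from $\Dd$ to $\Ff$. The only subtlety worth flagging is that the two assumptions control convergence in different norms ($\Ff$ versus $\Dd$), and the decomposition above is specifically engineered so that each residual term is bounded by exactly one of the two convergent quantities.
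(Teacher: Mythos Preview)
Your proof is correct and follows the same strategy as the paper: both rely on the Galerkin identity $\Aa_N\Pp_{\Dd_N}=\Pp_{\Ff_N}\Aa\Pp_{\Dd_N}$ together with the contractivity of $\Aa\colon\Dd\to\Ff$, and both reduce the first statement to the observation that $\Pp_{\Dd_N}\psi=\psi$ once $N\ge N_0$.

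The only difference is cosmetic. You split
\[
\Aa_N\Pp_{\Dd_N}f-\Aa f=\Pp_{\Ff_N}\Aa(\Pp_{\Dd_N}f-f)+(\Pp_{\Ff_N}-\rm{Id})\Aa f,
\]
whereas the paper adds and subtracts $\Aa\Pp_{\Dd_N}f$ first and then further expands, arriving at the three-term form
\[
(\Pp_{\Ff_N}-\rm{Id})\Aa f+(\Pp_{\Ff_N}-\rm{Id})\Aa(\Pp_{\Dd_N}-\rm{Id})f+\Aa(\Pp_{\Dd_N}-\rm{Id})f.
\]
Your two terms are exactly what one gets by recombining the paper's last two summands, so the arguments are equivalent; your version is slightly more economical for the convergence statement, while the paper's expanded form makes the dependence on both projection errors more visible when they later read off rates in Corollary~\ref{order corollary}.
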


\begin{proof}
	Using basic algebra and the fact that by definition $\Aa_N = \restr{\Pp_{\Ff_N} \Aa}{\Ff_N}$, we obtain
	\begin{align*}
		\Aa_N \Pp_{\Dd_N} - \Aa & =(\Aa_N - \Aa)\Pp_{\Dd_N} +\Aa \Pp_{\Dd_N} -\Aa                                                           \\
		                        & =(\Pp_{\Ff_N} - \rm{Id})\Aa\Pp_{\Dd_N} +\Aa (\Pp_{\Dd_N} -\rm{Id})                                        \\
		                        & =(\Pp_{\Ff_N} - \rm{Id})\Aa +(\Pp_{\Ff_N} - \rm{Id})\Aa(\Pp_{\Dd_N}-\rm{Id}) +\Aa (\Pp_{\Dd_N} -\rm{Id}).
	\end{align*}
	Consider now $f\in\Dd$. Applying the triangle inequality and the fact that $\Aa$ is continuous on its domain gives
	\begin{equation} \label{convergence}
		\begin{split}
			\norm{\Aa_N \Pp_{\Dd_N} f- \Aa f}_\Ff & \leq \norm{(\Pp_{\Ff_N} - \rm{Id})\Aa f}_\Ff +\norm{(\Pp_{\Ff_N} - \rm{Id})\Aa}\norm{\Pp_{\Dd_N} f-f}_\Dd \\&~~~+\norm{\Aa} \norm{\Pp_{\Dd_N} f-f}_\Dd.
		\end{split}
	\end{equation}
	If $f\in \Ff_\infty$, then $\Pp_{\Dd_N}f=f$ for $N$ large enough, and using Assumption \ref{projection assumption} with $\phi := \Aa f$ proves the first part of the theorem. If $f\in \Dd$, then combining Assumption \ref{projection assumption} and Assumption \ref{projection operator assumption} concludes the proof.
\end{proof}

The first part of Theorem \ref{convergence of projection theorem} is useful in the case where we have a finite-dimensional space of observables we are interested in. In this case, these can be incorporated directly into $\Ff_N$. The second part of the theorem is useful when we want to know the evolution of every possible observable. The proof also shows that the order of convergence depends completely on the rate of convergence of $\mathcal{P}_{\Ff_N} f$ and $\Pp_{\Dd_N}f$ to $f$. We summarise this result in the following corollary.

\begin{corollary} \label{order corollary}
	Consider $\Psi_N$ satisfying Assumption \ref{assumption} and such that $ \norm{\Pp_{\Ff_N} \phi-\phi}_\Ff = \mathcal{O}(N^{-\alpha})
	$ for all $\phi\in \Ff$. Then
	\begin{equation*}
		\norm{\Aa_N \Pp_{\Dd_N}\psi- \Aa \psi}_\Ff=\mathcal{O}(N^{-\alpha}), \quad\forall \psi \in \Ff_\infty.
	\end{equation*}
	If additionally, $\Aa$ is closed and $ \norm{\Pp_{\Dd_N} f-f}_\Dd=\mathcal{O}(N^{-\alpha})$ for all $f\in\Dd$, then
	\begin{equation*}
		\norm{\Aa_N \Pp_{\Dd_N} f - \Aa f}_\Ff=\mathcal{O}(N^{-\alpha}), \quad\forall f\in \Dd,
	\end{equation*}
	where, in both cases, the hidden constant depends only linearly on $\norm{\Aa},\norm{\psi},\norm{f}$.
\end{corollary}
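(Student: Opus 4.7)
The plan is to repurpose the quantitative bound already obtained inside the proof of Theorem \ref{convergence of projection theorem}, namely the inequality
\begin{equation*}
\norm{\Aa_N \Pp_{\Dd_N} f- \Aa f}_\Ff \leq \norm{(\Pp_{\Ff_N} - \rm{Id})\Aa f}_\Ff +\norm{(\Pp_{\Ff_N} - \rm{Id})\Aa}\norm{\Pp_{\Dd_N} f-f}_\Dd +\norm{\Aa} \norm{\Pp_{\Dd_N} f-f}_\Dd,
\end{equation*}
and feed into it the stipulated rates on $\norm{\Pp_{\Ff_N}\phi-\phi}_\Ff$ and $\norm{\Pp_{\Dd_N}f-f}_\Dd$. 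The key observation is that the $\mathcal{O}(N^{-\alpha})$ hypotheses are really linear in the respective norms (that is the natural way best-approximation rates scale), so each term on the right-hand side can be tracked with a constant depending linearly on $\norm{\Aa}$ and on $\norm{f}$ (or $\norm{\psi}$).

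For the first claim, I would exploit $\psi\in\Ff_\infty=\bigcup_n \Ff_n$: by definition, there exists $N_0$ with $\psi\in\Ff_{N_0}\subseteq\Ff_N$ for all $N\ge N_0$, hence $\Pp_{\Dd_N}\psi=\psi$ eventually and the inequality collapses to
\begin{equation*}
\norm{\Aa_N \Pp_{\Dd_N}\psi- \Aa \psi}_\Ff = \norm{(\Pp_{\Ff_N} - \rm{Id})\Aa\psi}_\Ff .
\end{equation*}
Applying the rate hypothesis to $\phi:=\Aa\psi\in\Ff$ yields an $\mathcal{O}(N^{-\alpha})$ bound whose hidden constant is linear in $\norm{\Aa\psi}_\Ff\le \norm{\Aa}\,\norm{\psi}_\Dd=\norm{\Aa}\,\norm{\psi}$, which is what is claimed.

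For the second claim, I would combine the two rate assumptions with the uniform bound
\begin{equation*}
\norm{(\Pp_{\Ff_N} - \rm{Id})\Aa}_{\Dd\to\Ff} \le \norm{\Pp_{\Ff_N} - \rm{Id}}_{\Ff\to\Ff}\,\norm{\Aa}_{\Dd\to\Ff} \le 2\norm{\Aa},
\end{equation*}
which holds since $\Pp_{\Ff_N}$ is an orthogonal projection on $\Ff$ and $\Aa\colon\Dd\to\Ff$ has norm at most $1$ (in fact, we only need a uniform-in-$N$ bound). Inserting everything into the displayed inequality produces
\begin{equation*}
\norm{\Aa_N \Pp_{\Dd_N} f- \Aa f}_\Ff \le C_1 N^{-\alpha}\norm{\Aa f}_\Ff + 3\norm{\Aa}\,C_2 N^{-\alpha}\norm{f}_\Dd = \mathcal{O}(N^{-\alpha})
\end{equation*}
with a constant linear in $\norm{\Aa}$ and $\norm{f}$, as required.

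The only delicate point is bookkeeping the linearity of the hidden constant: the statement "$\norm{\Pp_{\Ff_N}\phi-\phi}_\Ff=\mathcal{O}(N^{-\alpha})$ for all $\phi\in\Ff$" must be read as the uniform bound $\norm{\Pp_{\Ff_N}\phi-\phi}_\Ff\le C N^{-\alpha}\norm{\phi}_\Ff$ (the natural quantitative form of a best-approximation estimate), and analogously for the $\Dd$-norm rate; this is what allows the first term to be written as $C N^{-\alpha}\norm{\Aa f}_\Ff\le C\norm{\Aa}\,N^{-\alpha}\norm{f}_\Dd$. Beyond this minor clarification, the proof is essentially a restatement of the bound already derived in Theorem \ref{convergence of projection theorem} with the qualitative convergence replaced by the quantitative rate, so no additional machinery is needed.
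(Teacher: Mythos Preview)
Your proposal is correct and follows essentially the same approach as the paper: both invoke the inequality established inside the proof of Theorem \ref{convergence of projection theorem}, observe that for $\psi\in\Ff_\infty$ the last two terms vanish for large $N$, and then insert the assumed rates. Your write-up is in fact more explicit than the paper's one-line proof, particularly in tracking the linear dependence of the hidden constant on $\norm{\Aa}$ and $\norm{f}$ and in clarifying how the $\mathcal{O}(N^{-\alpha})$ hypothesis should be read uniformly.
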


\begin{proof}
	This follows from inequality \eqref{convergence} and the observation that if $f\in \Ff_\infty$ then the second and third terms in this equation are identically zero for large enough $N$.
\end{proof}

\section{Joint limit in data and dictionary} \label{Joint convergence section}

In Sections \ref{convergence data section} and \ref{convergence Galerkin section}, we studied the iterated limits of $\wh{\Aa}_{NM}$ when the size of the training data set $M$ and the number of basis functions $N$ go to infinity. In this section, we study the behaviour of $\wh{\Aa}_{NM}$ when $M$ and $N$ increase simultaneously.

{Some concentration bounds for EDMD were derived in \cite{colbrook2024beyond}.} The projection error was studied for various approximation spaces, such as reproducing kernel Hilbert spaces and those generated by finite-dimensional bases of wavelets, in \cite{kurdila2018koopman}. In \cite{zhang2023quantitative}, the authors work with the generator of an ordinary differential equation and derive a projection error in the context of a finite element basis. They also provide a finite-data error bound on the approximation of the generator in the case where the data is sampled from the Lebesgue measure. In \cite{Nske2021FiniteDataEB}, the authors derive an error bound for the approximation error of gEDMD under the assumptions that the Koopman semigroup is exponentially stable and the points are sampled from a probability measure invariant under the flow and from a single ergodic trajectory. The error bounds in \cite{zhang2023quantitative, Nske2021FiniteDataEB} both require $M$ to be ``sufficiently large'' so that the empirical Gram matrix is invertible. However, no bound on how large $M$ must be is given. In fact, a deterministic bound on $M$ is impossible, but rather a probabilistic one is needed.

We work under relaxed assumptions, for example, we do not impose that the dictionary functions be orthonormal, we do not impose that $\bm{x}_1, \dots, \bm{x}_M$ be sampled from the Lebesgue measure, a measure invariant under the flow or even from the same trajectory, and we do not impose that the empirical Gram matrix be invertible. We first formulate an existence result of the following type:

\begin{theorem}[Convergence in joint limit]
	\label{joint limit theorem}
	Let $\Psi$ satisfy Assumptions \ref{assumption}, \ref{moments assumption}, and \ref{projection assumption}, then there exists a sequence $\set{(N,M_N)}_{N=1}^\infty$ such that for any $M'_N \geq M_N$ almost surely
	\begin{equation*}
		\lim_{N \to \infty}\norm{\wh{\Aa}_{NM'_N} {\psi}- \Aa \psi }_\Ff=0, \quad \forall \psi \in \Ff_\infty.
	\end{equation*}
	If additionally Assumption \ref{projection operator assumption} is satisfied, then almost surely
	\begin{equation*}
		\lim_{N \to \infty}\norm{\wh{\Aa}_{NM'_N}{\Pp_{\Dd_N} f}- \Aa f }_\Ff=0, \quad \forall f \in \Dd.
	\end{equation*}
\end{theorem}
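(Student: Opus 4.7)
The plan is to combine Corollary \ref{convergence data corollary} (per-$N$ almost sure convergence $\wh{\Aa}_{NM}\to \Aa_N$ as $M\to\infty$) with Theorem \ref{convergence of projection theorem} ($\Aa_N\Pp_{\Dd_N}\to \Aa$) through a diagonal Borel--Cantelli extraction. Concretely, for each fixed $N$, Corollary \ref{convergence data corollary} together with Lemma \ref{invertiblelemma} (which ensures $\wh{\Aa}_{NM}$ is eventually realised as an operator on $\Ff_N$ via \eqref{identification}) implies that the tail supremum
\begin{equation*}
X_{N,M}:=\sup_{M'\ge M}\bigl\|\wh{\Aa}_{NM'}-\Aa_N\bigr\|,
\end{equation*}
taken in the operator norm on $\Ff_N$ endowed with the $\Ff$-norm, is almost surely finite for large $M$ and satisfies $X_{N,M}\to 0$ almost surely. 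In particular $\PP(X_{N,M}>1/N)\to 0$ as $M\to\infty$, so we may select $M_N\in\N$ with $\PP(X_{N,M_N}>1/N)\le 2^{-N}$. The first Borel--Cantelli lemma then yields that almost surely only finitely many of the events $\{X_{N,M_N}>1/N\}$ occur, hence almost surely
\begin{equation*}
\sup_{M'_N\ge M_N}\bigl\|\wh{\Aa}_{NM'_N}-\Aa_N\bigr\|\le \frac{1}{N}\quad\text{for all sufficiently large } N.
\end{equation*}

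With this deterministic sequence $(M_N)$ in hand, both claims follow from one triangle-inequality split. For $\psi\in\Ff_\infty$, there exists $N_0$ with $\psi\in\Ff_{N_0}$; for $N\ge N_0$ we have $\Pp_{\Dd_N}\psi=\psi$, and
\begin{equation*}
\bigl\|\wh{\Aa}_{NM'_N}\psi-\Aa\psi\bigr\|_\Ff\le \bigl\|\wh{\Aa}_{NM'_N}-\Aa_N\bigr\|\,\|\psi\|_\Ff+\|\Aa_N\psi-\Aa\psi\|_\Ff,
\end{equation*}
where the first term is bounded by $\|\psi\|_\Ff/N$ by the Borel--Cantelli step and the second vanishes by the first part of Theorem \ref{convergence of projection theorem}. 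For $f\in\Dd$, the analogous decomposition
\begin{equation*}
\bigl\|\wh{\Aa}_{NM'_N}\Pp_{\Dd_N}f-\Aa f\bigr\|_\Ff\le \bigl\|\wh{\Aa}_{NM'_N}-\Aa_N\bigr\|\,\|\Pp_{\Dd_N}f\|_\Ff+\|\Aa_N\Pp_{\Dd_N}f-\Aa f\|_\Ff
\end{equation*}
works because $\|\Pp_{\Dd_N}f\|_\Ff\le\|\Pp_{\Dd_N}f\|_\Dd\le\|f\|_\Dd$ is uniformly bounded in $N$, so the first summand is at most $\|f\|_\Dd/N\to 0$, and the second vanishes by the second part of Theorem \ref{convergence of projection theorem}.

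The principal technical subtlety is the domain/measurability issue arising from the fact that $\wh{\Aa}_{NM}$ is a priori defined on the empirical space $\wh{\Ff}_{NM}$, and is identified with an operator on $\Ff_N$ only once $\wh{\bm G}_{NM}$ is invertible. Since Lemma \ref{invertiblelemma} only guarantees this for $M$ beyond a random threshold $m_0(N)$, one must either enlarge $M_N$ so that $\PP(m_0(N)>M_N)\le 2^{-N}$ (absorbed into the same Borel--Cantelli step by taking a maximum of the two thresholds), or adopt the convention $\|\wh{\Aa}_{NM}-\Aa_N\|=+\infty$ on the null event where the identification fails; either choice preserves the argument. Beyond this bookkeeping, the proof is essentially an abstract diagonal extraction, and the quantitative rate $1/N$ is arbitrary — any summable sequence would do.
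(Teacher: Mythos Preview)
Your proof is correct and follows the same overall strategy as the paper: the triangle-inequality split
\begin{equation*}
\bigl\|\wh{\Aa}_{NM'_N}\Pp_{\Dd_N}f-\Aa f\bigr\|_\Ff \le \bigl\|\wh{\Aa}_{NM'_N}-\Aa_N\bigr\|\,\|\Pp_{\Dd_N}f\|+\|\Aa_N\Pp_{\Dd_N}f-\Aa f\|_\Ff,
\end{equation*}
with Corollary~\ref{convergence data corollary} controlling the first term and Theorem~\ref{convergence of projection theorem} the second, is exactly what the paper does.

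The one genuine difference is in how the sequence $(M_N)$ is produced. The paper simply invokes Corollary~\ref{convergence data corollary} to assert that ``for any $N$, there exists almost surely $M_N$'' with $\|\wh{\Aa}_{NM'}-\Aa_N\|<\tfrac{1}{2N}$ for $M'\ge M_N$; read literally this yields a \emph{random} threshold $M_N(\omega)$, and the theorem statement's ``for any $M'_N\ge M_N$'' is then slightly ambiguous. Your Borel--Cantelli extraction (choose deterministic $M_N$ with $\PP(X_{N,M_N}>1/N)\le 2^{-N}$, then a.s.\ only finitely many failures) produces a \emph{deterministic} sequence $(M_N)$ on a single full-measure event, which is the natural reading of the theorem and arguably tightens the argument. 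You also make explicit the bookkeeping around the identification $\wh{\Ff}_{NM}\simeq\Ff_N$ from Lemma~\ref{invertiblelemma}, which the paper leaves implicit. Both approaches are short, but yours is the more careful of the two.
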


\begin{proof}
	Let $f\in\Dd$ and $\varepsilon >0$. Using the triangle inequality, we have
	\begin{equation} \label{triangle0}
		\norm{\wh{\Aa}_{NM_N}\Pp_{\Dd_N} f- \Aa f }_\Ff \leq\norm{\wh{\Aa}_{NM_N}- \Aa_N }\norm{\Pp_{\Dd_N} f}_\Dd+\norm{\Aa_{N}\Pp_{\Dd_N} f- \Aa f }_\Ff.
	\end{equation}
	By Corollary \ref{convergence data corollary}, for any $N \in \N$, there exists almost surely $M_N$ such that for all $M'_N \geq M_N$
	\begin{equation} \label{ineq 1}
		\norm{\wh{\Aa}_{NM'_N} - \Aa_N} < \frac{1}{2N}.
	\end{equation}
	Let us set $N_0>\varepsilon^{-1}\|f\|_{\Dd}$ and such that for $N\geq N_0$ we have
	\begin{equation} \label{ineq 2}
		\norm{\Aa_{N}\Pp_{\Dd_N} f- \Aa f }_\Ff<\frac{\varepsilon}{2},
	\end{equation}
	where \eqref{ineq 2} is possible due to Theorem \ref{convergence of projection theorem} in both the case $f\in \Ff_\infty$ as well as $f\in \Dd$. As a result, we obtain from \eqref{triangle0}, \eqref{ineq 1}, and \eqref{ineq 2} that for all $N\geq N_0$
	\begin{equation*}
		\norm{\wh{\Aa}_{NM'_N}\Pp_{\Dd_N} f - \Aa f }_\Ff <\frac{\varepsilon}{2}+\frac{\varepsilon}{2}=\varepsilon,
	\end{equation*}
	where it was also used that $\norm{\Pp_{\Dd_N}}=1$. Since $\varepsilon >0$ and $f$ were arbitrary, this proves the theorem.
\end{proof}

This existence result forms a good foundation. In practice, however, we may need an explicit dependence between $M$ and $N$ and explicit error bounds. Before we can proceed, we first need to state additional assumptions.

\begin{assumption}[Boundedness of the basis functions]\label{bn assumption}
	We assume that:
	\begin{enumerate}[label=(\alph*), ref=\theassumption(\alph*)]  \setlength{\itemsep}{0ex}
		\item \label{basis norm assumption} There exists $\gamma_N^{}$ such that $\mu$ almost everywhere $\abs{{\Psi}_N(\bm{x})}^2< \gamma_N^{}$.
		\item \label{basis norm assumption 2} There exists $\gamma_N^{}$ such that $\mu $ almost everywhere $ \abs{\Aa{\Psi}_N(\bm{x})}^2 < \gamma_N^{}$,
	\end{enumerate}
	where in \ref{basis norm assumption 2} the operator is applied componentwise.
\end{assumption}

Let us illustrate these assumptions with two examples.

\begin{example}[Bounded dictionary] \label{infinity example}
	Suppose that for all $i\in\set{1,\dots,N}$ it holds that
	\begin{equation*}
		\norm{\psi_i}^2_{\infty}<\gamma, \quad \norm{\Aa\psi_i}^2_\infty < \gamma.
	\end{equation*}
	Then Assumption \ref{bn assumption} holds with $\gamma_N^{}=N\gamma$. \red{For example, let $\X =[0,1]^d$ and:
	\begin{enumerate}
		\item Let $\psi_\b{n} = e^{2\pi i \b{n} \cdot \bm{x}}$ be the Fourier basis and $\Aa $ the Koopman operator of any dynamical system, then $\gamma =1$. 
		\item Let $\psi_\b{n}=\qty(1+\norm{2\pi \b{n}}+\norm{2\pi\b{n}}^2)^{-1}e^{2\pi i \b{n} \cdot \bm{x}}$ be the Fourier basis orthonormalized in $H^2(\X)$ and let $\Aa= \nabla - \Delta$, then the bound holds with $\gamma =1$. \exampleSymbol
	\end{enumerate}
	}
\end{example}

\begin{observation}
{Though it is possible to scale basis functions to enforce a uniform bound on $\norm{\Aa\psi_n}_\infty$ as in Example \ref{infinity example}, this is not recommended for higher-order operators. As shown in \eqref{order of convergence}, the sample complexity scales with $\norm{\bm{G}_N^{-1}}^4$. For the $H^2$-orthonormalized basis, $\norm{\bm{G}_N^{-1}}$ grows rapidly with $N$ due to vanishing $L^2$ norms. Instead, for higher-order operators, it is optimal to use well-conditioned bases and allow the bound $\gamma_N$ to grow with $N$.}
\end{observation}

\begin{example}[Locally supported dictionary] \label{fem bound example}
	Consider $\Psi_N$ as before and assume that there exists some $K\in\mathbb{N}$ for which
	\begin{equation*}
		\mu\qty(\bigcap_{k=1}^{K}\operatorname{supp}(\psi_{i_k}))=0, \quad \forall i_{1},\dots,i_{K}\in \set{1,\dots,N}.
	\end{equation*}
	Then Assumption \ref{bn assumption} holds with $\gamma_N=K\gamma$ (that is constant in $N$). This is, for example, the case if $\Psi$ is a finite element basis and $\Aa$ has order $0$. \exampleSymbol
\end{example}
The first ingredient for our error analysis is an error bound for the Gram matrix $\bm{G}$.
\begin{lemma}[Error estimate $\bm{G}$] \label{chernoff G}
	Under Assumptions \ref{assumption} and \ref{basis norm assumption}, given any $0<\delta <\frac{1}{2} \norm{\bm{G}^{-1}_N}^{-1}$ and $p\in (0,1)$ and for all
	\begin{equation*}
		M>(3 \norm{\bm{G}_N}+2 \delta ) \frac{2 \gamma_N^{}}{3 \delta ^2}\log \left(\frac{2
			N}{1-p}\right),
	\end{equation*}
	it holds that
	\begin{equation*}
		\mathbb{P}\qb{ \bm{\wh{G}}_{NM} \text{ is invertible and } \norm{\bm{\wh{G}}^{-1}_{NM}-\bm{G}_{N}^{-1}}<2{\norm{\bm{G}^{-1}_N}^2\delta}} \ge p.
	\end{equation*}
\end{lemma}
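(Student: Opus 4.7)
The plan is to apply a matrix Bernstein inequality to the i.i.d.\ sum
\begin{equation*}
\wh{\bm{G}}_{NM} - \bm{G}_N = \frac{1}{M}\sum_{m=1}^M \bm{S}_m, \qquad \bm{S}_m := {\Psi(\bm{x}_m)\Psi(\bm{x}_m)^*} - \bm{G}_N,
\end{equation*}
to obtain the deviation bound $\|\wh{\bm{G}}_{NM} - \bm{G}_N\| < \delta$ with probability at least $p$, and then to use a Neumann series to convert that spectral estimate into invertibility and the desired bound on $\wh{\bm{G}}_{NM}^{-1} - \bm{G}_N^{-1}$.

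For the Bernstein step, the summands are i.i.d., Hermitian, mean-zero, and by Assumption \ref{basis norm assumption} satisfy $\|\Psi(\bm{x}_m)\Psi(\bm{x}_m)^*\| = |\Psi(\bm{x}_m)|^2 \le \gamma_N$ almost surely. Since $\bm{G}_N = \E[\Psi\Psi^*]$ is PSD with $\|\bm{G}_N\|\le \gamma_N$, the triangle inequality gives $\|\bm{S}_m\| \le 2\gamma_N$, so one may take $R = 2\gamma_N/M$. For the variance, the key positivity trick is $\E[\bm{S}_m^2] \preceq \E[|\Psi|^2\,\Psi\Psi^*] \preceq \gamma_N \bm{G}_N$, which yields $v := M\|\E[\bm{S}_m^2]\|/M^2 \le \gamma_N\|\bm{G}_N\|/M$. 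Plugging into the standard bound
\begin{equation*}
\PP\qb{\norm{\wh{\bm{G}}_{NM}-\bm{G}_N}\ge \delta} \le 2N\exp\qty(-\frac{M\delta^2/2}{\gamma_N\|\bm{G}_N\|+2\gamma_N\delta/3})
\end{equation*}
and solving the resulting inequality for $M$ reproduces exactly the threshold $M > (3\|\bm{G}_N\| + 2\delta)\tfrac{2\gamma_N}{3\delta^2}\log\!\qty(\tfrac{2N}{1-p})$.

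Conditional on the event $\|\wh{\bm{G}}_{NM} - \bm{G}_N\| < \delta$, the assumption $\delta < \tfrac{1}{2}\|\bm{G}_N^{-1}\|^{-1}$ gives $\|\bm{G}_N^{-1}(\wh{\bm{G}}_{NM}-\bm{G}_N)\| < \tfrac{1}{2} < 1$, so the factorisation $\wh{\bm{G}}_{NM} = \bm{G}_N\qp{\bm{I} + \bm{G}_N^{-1}\bm{E}}$ with $\bm{E} := \wh{\bm{G}}_{NM}-\bm{G}_N$ is invertible via the Neumann series. Writing $\wh{\bm{G}}_{NM}^{-1}-\bm{G}_N^{-1} = -(\bm{I}+\bm{G}_N^{-1}\bm{E})^{-1}\bm{G}_N^{-1}\bm{E}\,\bm{G}_N^{-1}$ and using $\|(\bm{I}+\bm{X})^{-1}\|\le 1/(1-\|\bm{X}\|) \le 2$ for $\|\bm{X}\|\le 1/2$, we get
\begin{equation*}
\norm{\wh{\bm{G}}_{NM}^{-1}-\bm{G}_N^{-1}} \le 2\norm{\bm{G}_N^{-1}}^2 \norm{\bm{E}} < 2\norm{\bm{G}_N^{-1}}^2 \delta,
\end{equation*}
which is precisely the claimed inequality.

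The only genuine obstacle is arranging the Bernstein constants so that the threshold matches the statement; in particular, using the crude bound $\|\bm{S}_m\|\le 2\gamma_N$ (rather than $\gamma_N$) is what produces the $2\gamma_N\delta/3$ term in the denominator and hence the factor $3\|\bm{G}_N\|+2\delta$ in the final expression. Everything else is routine algebra: symmetrising the sum, applying the PSD domination $\E[|\Psi|^2\Psi\Psi^*]\preceq \gamma_N\bm{G}_N$, and exploiting the standard Neumann-series perturbation bound for matrix inverses.
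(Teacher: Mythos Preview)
Your proof is correct and follows essentially the same route as the paper: the paper applies its Corollary~\ref{Bernstein Lemma} (which packages exactly your variance bound $\E[\bm{S}_m^2]\preceq \gamma_N\bm{G}_N$ and crude norm bound $\|\bm{S}_m\|\le 2\gamma_N$) to obtain $\|\wh{\bm{G}}_{NM}-\bm{G}_N\|<\delta$ with probability at least $p$, and then runs the identical Neumann-series perturbation argument to convert this into invertibility and the bound $2\|\bm{G}_N^{-1}\|^2\delta$. The only cosmetic difference is that the paper sums the full Neumann series $\bm{G}_N^{-1}\sum_{k\ge 1}(-\bm{Z}\bm{G}_N^{-1})^k$ and bounds it by $\|\bm{G}_N^{-1}\|^2\delta/(1-\|\bm{G}_N^{-1}\|\delta)<2\|\bm{G}_N^{-1}\|^2\delta$, whereas you use the equivalent closed-form identity $\wh{\bm{G}}_{NM}^{-1}-\bm{G}_N^{-1}=-(\bm{I}+\bm{G}_N^{-1}\bm{E})^{-1}\bm{G}_N^{-1}\bm{E}\,\bm{G}_N^{-1}$.
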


\begin{proof}
	Given $m \in \set{1,\dots,M} $, we define
	\begin{equation} \label{g as vector}
		\bm{S}_m := \frac{1}{M}\qty(\Psi_N(\bm{x}_m)\Psi_N^\dagger(\bm{x}_m) -\bm{G}_N),
	\end{equation}
	where $\dagger$ denotes the Hermitian adjoint.
	By construction, we are in the conditions of Bernstein's inequality for the covariance in Corollary \ref{Bernstein Lemma} where, using the notation of this inequality, $\bm{g}_m=\bm{c}_m=\Psi_N(\bm{x}_m), \bm{G}=\bm{C}=\bm{T}=\bm{G}_N, \gamma =\gamma_N^{}$ and
	\begin{equation*}
		\bm{Z} := \sum_{m=1}^M \bm{S}_m =\bm{\wh{G}}_{NM}-\bm{G}_N .
	\end{equation*}
	As a result, for $M$ as in the statement of the proposition, we obtain that
	\begin{equation*}
		\mathbb{P}\big[ \norm{\bm{Z}} <\delta \big]\geq p.
	\end{equation*}
	Now, since $\delta < 1/\norm{\bm{G}^{-1}_N}$ and $\bm{G}_N$ is invertible, we deduce that, with probability greater or equal to~$p$, the approximation $\wh{\bm{G}}_{NM}=\bm{G}_N+\bm{Z}$ is also invertible with inverse given by the Neumann series
	\begin{equation*}
		\wh{\bm{G}}_{NM}^{-1}= \bm{G}^{-1}_N\sum_{k=0}^{\infty} \qty(-\bm{Z}\bm{G}_N^{-1})^k.
	\end{equation*}
	Taking norms shows that, with probability greater or equal to $p$, $\wh{\bm{G}}_{NM}$ is invertible and
	\begin{equation}\label{neumann}
		\norm{\bm{\wh{G}}_{NM}^{-1}-\bm{G}_{N}^{-1}}\leq \norm{\bm{G}^{-1}_N}\sum_{k=1}^{\infty} \qty(\norm{\bm{G}^{-1}_N}\delta)^k= \frac{\norm{\bm{G}^{-1}_N}^2 \delta}{1-\norm{\bm{G}_N^{-1}}\delta}< 2\norm{\bm{G}^{-1}_N}^2\delta.
	\end{equation}
	This concludes the proof.
\end{proof}

In an analogous fashion, we can bound the error due to using $\bm{\wh{C}}_{NM}$ with an arbitrarily large probability. In fact, the result is more straightforward as we no longer have to deal with the matrix inversion necessary for $\bm{\wh{G}}_{NM}$.

\begin{lemma}[Error estimate $\bm{C}$] \label{chernoff c}
	Let $\Psi$ satisfy Assumptions \ref{assumption} and \ref{bn assumption} and let $\delta >0$ and $p\in (0,1)$ be arbitrary. Write $[\bm{T}_N]_{ij} := \br{\Aa \psi_i, \Aa \psi_j}$. Then, for all
	\begin{equation*}
		M>(3 \max \set{\norm{\bm{G}_N}, \norm{\bm{T}_N}}+2 \delta ) \frac{2 \gamma_N^{}}{3 \delta ^2}\log \left(\frac{2
			N}{1-p}\right),
	\end{equation*}
	it holds that
	\begin{equation*}
		\mathbb{P}\qty[ \norm{\wh{\bm{C}}_{NM}-\bm{C}_{N}}<\delta]\geq p.
	\end{equation*}
\end{lemma}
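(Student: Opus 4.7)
The plan is to mirror the proof of Lemma \ref{chernoff G} but applied to the structure matrix rather than the Gram matrix. A crucial simplification is that no matrix inversion is involved: we want a deviation bound on $\wh{\bm{C}}_{NM}$ itself, not on its inverse, so the sub-level restriction $\delta < \tfrac{1}{2}\|\bm{G}_N^{-1}\|^{-1}$ from Lemma \ref{chernoff G} disappears and $\delta > 0$ is arbitrary.

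First I would write $\wh{\bm{C}}_{NM} - \bm{C}_N$ as a sum of i.i.d., mean-zero, matrix-valued random variables. Interpreting $\Psi_N(\bm{x})$ and $\mathcal{A}\Psi_N(\bm{x})$ as column vectors, the definition in \eqref{approximate matrices} gives the rank-one decomposition
\begin{equation*}
    \wh{\bm{C}}_{NM} = \frac{1}{M}\sum_{m=1}^M \mathcal{A}\Psi_N(\bm{x}_m)\,\Psi_N^\dagger(\bm{x}_m),
    \qquad
    \bm{C}_N = \mathbb{E}\qb{\mathcal{A}\Psi_N(\bm{x})\,\Psi_N^\dagger(\bm{x})},
\end{equation*}
so that setting
\begin{equation*}
    \bm{S}_m := \frac{1}{M}\qp{\mathcal{A}\Psi_N(\bm{x}_m)\,\Psi_N^\dagger(\bm{x}_m) - \bm{C}_N}
\end{equation*}
yields $\bm{Z}:=\sum_{m=1}^M \bm{S}_m = \wh{\bm{C}}_{NM}-\bm{C}_N$, with the $\bm{S}_m$ i.i.d.\ and centred by Assumption \ref{a3}.

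Next I would invoke Corollary \ref{Bernstein Lemma} with the identifications $\bm{g}_m = \mathcal{A}\Psi_N(\bm{x}_m)$ and $\bm{c}_m = \Psi_N(\bm{x}_m)$. Assumption \ref{bn assumption} supplies the $\mu$-a.s.\ bound $|\bm{g}_m|^2, |\bm{c}_m|^2 < \gamma_N$, and the relevant second-moment matrices are $\mathbb{E}[\bm{g}_m \bm{g}_m^\dagger] = \bm{T}_N$ and $\mathbb{E}[\bm{c}_m \bm{c}_m^\dagger] = \bm{G}_N$, whose operator norms enter the Bernstein bound via $\max\{\|\bm{G}_N\|,\|\bm{T}_N\|\}$. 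Substituting these quantities into the statement of the Bernstein-type inequality gives exactly that, for $M$ exceeding the stated threshold,
\begin{equation*}
    \mathbb{P}\qb{\norm{\bm{Z}} < \delta} \ge p,
\end{equation*}
which is the claim.

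The only real content is bookkeeping: confirming that the rectangular (non-self-adjoint) form of Bernstein's inequality stated in Corollary \ref{Bernstein Lemma} does apply to sums of $\mathcal{A}\Psi_N(\bm{x}_m)\Psi_N^\dagger(\bm{x}_m) - \bm{C}_N$, and verifying that $\max\{\|\bm{G}_N\|,\|\bm{T}_N\|\}$ is indeed the bound on $\|\mathbb{E}[\bm{S}_m\bm{S}_m^\dagger]\|$ and $\|\mathbb{E}[\bm{S}_m^\dagger\bm{S}_m]\|$ that drives the matrix-Bernstein constant. Beyond this, no further ideas are needed; there is no Neumann-series step because inversion is absent, making the argument a strict simplification of the proof of Lemma \ref{chernoff G}.
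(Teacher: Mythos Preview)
Your proposal is correct and follows essentially the same route as the paper: define $\bm{S}_m=\frac{1}{M}\bigl(\Aa\Psi_N(\bm{x}_m)\Psi_N^\dagger(\bm{x}_m)-\bm{C}_N\bigr)$ and apply Corollary~\ref{Bernstein Lemma} directly, noting that no inversion step is needed. The only cosmetic difference is that the paper takes $\bm{g}_m=\Psi_N(\bm{x}_m)$ and $\bm{c}_m=\Aa\Psi_N(\bm{x}_m)$ (the reverse of your labeling), which is immaterial since the Bernstein bound depends only on $\max\{\|\bm{G}_N\|,\|\bm{T}_N\|\}$.
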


\begin{proof}
	The proof is similar to that of Lemma \ref{chernoff G}. Given $m \in \set{1, \dots,M}$, we define
	\begin{equation} \label{c as vector}
		\bm{S}_m := \frac{1}{M}\qty( \Aa \Psi_N(\bm{x}_m)\Psi_N^\dagger(\bm{x}_m) -\bm{C}_N).
	\end{equation}
	By construction, we are now able to apply Bernstein's inequality for the covariance defined in Corollary \ref{Bernstein Lemma}, where $\bm{g}_m=\Psi_N(\bm{x}_m), \bm{c}_m= \Aa \Psi_N(\bm{x}_m), \bm{G}=\bm{G}_N, \bm{C}= \bm{C}_N, \bm{T}=\bm{T}_N, \gamma =\gamma_N^{}$ and
	\begin{equation*}
		\bm{Z} := \sum_{m=1}^M \bm{S}_m =\bm{\wh{C}}_{NM}-\bm{C}_N .
	\end{equation*}
	As a result, for $M$ as in the statement of the proposition, we obtain that
	\begin{equation*}
		\mathbb{P}\qb{\norm{\bm{\wh{C}}_{NM}-\bm{C}_N}<\delta }=\mathbb{P}\big[\norm{\bm{Z}} < \delta \big]\geq p.
	\end{equation*}
	This concludes the proof.
\end{proof}

Having bounded the error due to using $\wh{\bm{G}}_{NM}$ and $\wh{\bm{C}}_{NM}$ instead of $\bm{G}_{N}$ and $\bm{C}_{N}$, we can now bound the error in approximating $\Aa_N$ by $\wh{\Aa}_{NM}$. To do so, we need to relate the bounds in the matrix norms back to the operator norms. As shown in the Lemma \ref{norm matrix operator lemma}, this depends on the condition number of $\bm{G}_N$, which can be written as
\begin{equation*}
	\kappa (\bm{G}_N) = \frac{\lambda_{\max}(\bm{G}_N)}{\lambda_{\min}(\bm{G}_N)},
\end{equation*}
since $\bm{G}_N$ is Hermitian.
The condition number is $1$ for an orthonormal basis but can become very large for other bases, such as the basis of monomials. This will be discussed in more detail below.

\begin{proposition}[Error estimate projection] \label{bound error projection}
	Let $\Psi_N$ satisfy Assumptions \ref{assumption} and \ref{bn assumption}. Let $0<\delta < \frac{1}{2}\norm{\bm{G}^{-1}_N}^{-1}$ and $p\in (0,1)$. Then, for all
	\begin{equation} \label{M condition}
		M > (3 \max \set{\norm{\bm{G}_N},\norm{\bm{T}_N}}+2 \delta ) \frac{2 \gamma^{}_N}{3 \delta ^2}\log \left(\frac{4
			N}{1-p}\right),
	\end{equation}
	it holds that
	\begin{equation*}
		\mathbb{P}\left[	\norm{\wh{\Aa}_{NM}- \Aa_N}\leq 2\sqrt{\kappa(\bm{G}_N)} \qty(1+\|\bm{C}_N\|\|\bm{G}_N^{-1}\|)\norm{\bm{G}_N^{-1}}\delta\right]\geq p.
	\end{equation*}
\end{proposition}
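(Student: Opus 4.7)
The plan is to combine the probabilistic matrix-level bounds of Lemmas \ref{chernoff G} and \ref{chernoff c} via a union bound, expand $\wh{\bm{A}}_{NM}^\top - (\bm{A}_N^\Psi)^\top$ using a standard perturbation identity for $\bm{C}\bm{G}^{-1}$, and then lift the resulting matrix-norm estimate to the operator norm on $\Ff_N$ through the condition-number factor $\sqrt{\kappa(\bm{G}_N)}$ supplied by Lemma \ref{norm matrix operator lemma}.

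First I would split the failure probability equally between the two matrix estimates. Setting $p' := (1+p)/2$, so that $1-p' = (1-p)/2$, the sample-size requirements in Lemmas \ref{chernoff G} and \ref{chernoff c} at individual confidence $p'$ both collapse into \eqref{M condition}, since $\log(2N/(1-p'))$ becomes $\log(4N/(1-p))$ and the prefactor of Lemma \ref{chernoff c} dominates the one of Lemma \ref{chernoff G}. A union bound then guarantees that, with probability at least $p$, simultaneously (i) $\wh{\bm{G}}_{NM}$ is invertible with $\norm{\wh{\bm{G}}_{NM}^{-1}-\bm{G}_N^{-1}} < 2\norm{\bm{G}_N^{-1}}^2\delta$, and (ii) $\norm{\wh{\bm{C}}_{NM}-\bm{C}_N} < \delta$.

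On this intersection the pseudoinverse in $\wh{\bm{A}}_{NM}^\top = \wh{\bm{C}}_{NM}\wh{\bm{G}}_{NM}^+$ coincides with a genuine inverse, so the telescoping identity
\begin{equation*}
\wh{\bm{A}}_{NM}^\top - \qty(\bm{A}_N^\Psi)^\top = \qty(\wh{\bm{C}}_{NM}-\bm{C}_N)\wh{\bm{G}}_{NM}^{-1} + \bm{C}_N\qty(\wh{\bm{G}}_{NM}^{-1}-\bm{G}_N^{-1})
\end{equation*}
applies. Combined with the elementary bound $\norm{\wh{\bm{G}}_{NM}^{-1}} \leq 2\norm{\bm{G}_N^{-1}}$, which follows from (i) and the hypothesis $\delta < \tfrac{1}{2}\norm{\bm{G}_N^{-1}}^{-1}$, the triangle inequality then yields the matrix-norm estimate
\begin{equation*}
\norm{\wh{\bm{A}}_{NM}^\top - \qty(\bm{A}_N^\Psi)^\top} \leq 2\norm{\bm{G}_N^{-1}}\delta\qty(1 + \norm{\bm{C}_N}\norm{\bm{G}_N^{-1}}).
\end{equation*}

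Finally, invoking Lemma \ref{norm matrix operator lemma} to pass from the spectral norm of matrices written in the basis $\Psi$ to the operator norm on $\Ff_N$ introduces exactly the factor $\sqrt{\kappa(\bm{G}_N)}$ and produces the claimed inequality. I do not anticipate any analytic difficulty here; the only real subtlety is the probabilistic bookkeeping that turns the $\log(2N/(1-p))$ of the individual Bernstein-type lemmas into the $\log(4N/(1-p))$ appearing in \eqref{M condition}, after which the proof reduces to standard perturbation algebra and a black-box application of the matrix-to-operator norm equivalence.
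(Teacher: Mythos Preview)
Your proposal is correct and follows essentially the same line as the paper's own proof: a union bound combining Lemmas~\ref{chernoff G} and~\ref{chernoff c} (which accounts for the $4N$ inside the logarithm), a perturbation expansion of $\wh{\bm{C}}_{NM}\wh{\bm{G}}_{NM}^{-1}-\bm{C}_N\bm{G}_N^{-1}$, and then Lemma~\ref{norm matrix operator lemma}. The only cosmetic difference is that you use the two-term telescoping identity $(\wh{\bm{C}}-\bm{C})\wh{\bm{G}}^{-1}+\bm{C}(\wh{\bm{G}}^{-1}-\bm{G}^{-1})$ together with the derived bound $\norm{\wh{\bm{G}}_{NM}^{-1}}\le 2\norm{\bm{G}_N^{-1}}$, whereas the paper writes the full three-term expansion $\bm{C}_N\bm{\delta}_{\bm{G}^{-1}}+\bm{\delta}_{\bm{C}}\bm{G}_N^{-1}+\bm{\delta}_{\bm{G}^{-1}}\bm{\delta}_{\bm{C}}$ and controls the cross term directly via $\delta<\tfrac12\norm{\bm{G}_N^{-1}}^{-1}$; both routes yield the identical constant.
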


\begin{proof}
	By Lemmas \ref{chernoff G} and \ref{chernoff c}, we deduce that for $M$ as defined above it holds that
	\begin{equation}\label{lemma bound}
		\bm{\wh{G}}_{NM} \text{ is invertible, }\norm{\bm{\delta}_{\bm{G}^{-1}}}<2\norm{\bm{G}^{-1}_N}^2\delta\text{, and }\norm{\bm{\delta}_{\bm{C}}}<\delta,
	\end{equation}
	with probability greater or equal to $p$, where, in order to simplify the notation, we used
	\begin{equation*}
		\bm{\delta}_{\bm{G}^{-1}} := \wh{\bm{G}}_{NM}^{-1}-{\bm{G}}_{N}^{-1}, \quad \bm{\delta}_{\bm{C}} := \wh{\bm{C}}_{NM}-{\bm{C}}_{N}.
	\end{equation*}
	Let us now restrict ourselves to the region of the probability space where \eqref{lemma bound} holds. First, let $\Tt := \wh{\Aa}_{NM}- \Aa_N$, then
	\begin{equation*}
		\qty(\bm{T}^{\Psi_N})^T=\wh{\bm{C}}_{NM}\wh{\bm{G}}_{NM}^{-1}-\bm{C}_N \bm{G}_N^{-1}=\bm{C}_N\bm{\delta}_{\bm{G^{-1}}}+\bm{\delta}_{\bm{C}}\bm{G}_N^{-1}+\bm{\delta }_{\bm{C}}\bm{\delta }_{\bm{G}^{-1}}.
	\end{equation*}
	As a result, by \eqref{lemma bound} and collecting the terms, we have
	\begin{equation*}
		\norm{\Tt^{\Psi_N}}\leq \qty(2+2\|\bm{C}_N\|\|\bm{G}_N^{-1}\|)\norm{\bm{G}_N^{-1}}\delta.
	\end{equation*}
	Now, applying Lemma \ref{norm matrix operator lemma} yields
	\begin{equation*}
		\norm{\Tt} \leq 2\sqrt{\kappa(\bm{G}_N)}\qty(1+\|\bm{C}_N\|\|\bm{G}_N^{-1}\|)\norm{\bm{G}_N^{-1}}\delta.
	\end{equation*}
	This concludes the proof.
\end{proof}

The bound on the error in approximating the projection $\Aa_N$ in the just proved proposition can be combined with an error in the approximation of $\Aa$ through $\Aa_N$. This gives the following result.

\begin{theorem}[Order of convergence]\label{OC theorem}
	Let $\Psi_N$ satisfy Assumptions \ref{assumption} and \ref{bn assumption} and $N \in \N$ be arbitrary. Define $\rho_N:= \sqrt{\kappa(\bm{G}_N)} \qty(1+\|\bm{C}_N\|\|\bm{G}_N^{-1}\|)$, let $\varepsilon \in (0, \rho_N)$ be arbitrary and write $\delta_N:=\varepsilon/\qty(2\rho_N\norm{\bm{G}_N^{-1}})$. Then, for all $p\in (0,1)$ and
	\begin{equation*}
		M > (3 \max \set{\norm{\bm{G}_N},\norm{\bm{T}_N}}+2 \delta_N ) \frac{2 \gamma^{}_N}{3 \delta_N ^2}\log \left(\frac{4
			N}{1-p}\right),
	\end{equation*}
	it holds that
	\begin{equation*}
		\mathbb{P}\left[	\norm{\wh{\Aa}_{NM}- \Aa_N}\leq \varepsilon \right]\geq p.
	\end{equation*}
	Furthermore, if $f \in \Ff_\infty$ and
	$\norm{(\Pp_{\Ff_N}- Id)\phi}_\Ff = \mathcal{O}( N^{-\alpha})$ for all $\phi \in \Ff$, or if $f\in \Dd$ and additionally $ \norm{(\Pp_{\Dd_N}- Id)f}_\Dd = \mathcal{O}( N^{-\alpha}) $, then, for $N =\mathcal{O}(\varepsilon^{-\nicefrac 1\alpha})$ and $M$ as defined above, it holds that
	\begin{equation*}
		\mathbb{P}\left[{\norm{\wh{\Aa}_{NM}\Pp_{\Dd_N} f- \Aa f }_\Ff}\leq \norm{\Aa}\norm{f}_{\Dd}\varepsilon \right] \geq p.
	\end{equation*}
\end{theorem}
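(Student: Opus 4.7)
The plan is to prove the two assertions in sequence, leveraging Proposition~\ref{bound error projection} for the data error and Corollary~\ref{order corollary} for the dictionary error.

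For the first assertion, I would apply Proposition~\ref{bound error projection} with the specific choice $\delta := \delta_N$. The immediate task is to check admissibility, i.e.\ $\delta_N < \frac{1}{2}\norm{\bm{G}_N^{-1}}^{-1}$. Substituting the definition of $\delta_N$, this inequality simplifies to $\varepsilon < \rho_N$, which is precisely the hypothesis of the theorem (note that $\rho_N \ge 1$ automatically, since $\kappa(\bm{G}_N)\ge 1$ and the remaining factor is at least one). With $\delta_N$ admissible, the lower bound on $M$ appearing in Proposition~\ref{bound error projection} coincides verbatim with the one stated in the theorem, and its conclusion yields, with probability at least $p$,
\begin{equation*}
    \norm{\wh{\Aa}_{NM}-\Aa_N} \leq 2\sqrt{\kappa(\bm{G}_N)}\qty(1+\norm{\bm{C}_N}\norm{\bm{G}_N^{-1}})\norm{\bm{G}_N^{-1}}\delta_N \;=\; 2\rho_N\norm{\bm{G}_N^{-1}}\delta_N \;=\; \varepsilon,
\end{equation*}
which is the first claim.

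For the second assertion, the strategy is a triangle inequality splitting the error into a \emph{data} term and a \emph{dictionary} term:
\begin{equation*}
    \norm{\wh{\Aa}_{NM}\Pp_{\Dd_N}f - \Aa f}_\Ff \;\leq\; \norm{\wh{\Aa}_{NM}-\Aa_N}\,\norm{\Pp_{\Dd_N}f}_\Dd \;+\; \norm{\Aa_N\Pp_{\Dd_N}f - \Aa f}_\Ff.
\end{equation*}
The data term is handled by the first part of the theorem together with $\norm{\Pp_{\Dd_N}}\le 1$, giving on the event of probability at least $p$ a bound of $\varepsilon\,\norm{f}_\Dd$. The dictionary term is deterministic and controlled by Corollary~\ref{order corollary}, which yields, in both the $f\in\Ff_\infty$ and $f\in\Dd$ cases, a bound of order $\norm{\Aa}\norm{f}_\Dd N^{-\alpha}$; choosing the implicit constant in $N=\mathcal{O}(\varepsilon^{-1/\alpha})$ large enough makes this contribution at most a fixed fraction of $\norm{\Aa}\norm{f}_\Dd\varepsilon$. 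A straightforward rescaling — applying the first part with $\varepsilon$ replaced by a constant multiple of $\norm{\Aa}\varepsilon$ — then merges the two contributions into the announced bound $\norm{\Aa}\norm{f}_\Dd\varepsilon$, modifying the $M$-threshold only by an absolute factor that is absorbed into the stated form.

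The main technical nuisance I foresee is this constant bookkeeping in the final step: the data term carries no intrinsic factor of $\norm{\Aa}$, whereas the target bound does, so the two contributions must be rebalanced either by rescaling $\varepsilon$ inside the application of Part~1 or by adjusting the constant hidden in $N=\mathcal{O}(\varepsilon^{-1/\alpha})$. Once this rescaling is carried out, everything else is a direct invocation of the already established lemmas and corollaries, so no genuinely new estimate is required.
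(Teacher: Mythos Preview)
Your proposal is correct and follows essentially the same route as the paper: verify $\delta_N<\tfrac12\norm{\bm{G}_N^{-1}}^{-1}$ from $\varepsilon<\rho_N$, invoke Proposition~\ref{bound error projection} for the first claim, then split via the same triangle inequality and apply Corollary~\ref{order corollary} for the second. The paper handles the constant mismatch you flag simply by writing $\varepsilon\norm{f}_\Dd + C\varepsilon\norm{\Aa}\norm{f}\lesssim \norm{\Aa}\norm{f}_\Dd\varepsilon$ and disposing of the degenerate cases $\Aa=0$ or $f=0$ separately, rather than rescaling $\varepsilon$; your rescaling idea is an equivalent way to absorb the same constant.
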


\begin{proof}
	By construction, we have that $\delta_N < \frac{1}{2}\norm{\bm{G}_N^{-1}}^{-1}$. Applying Proposition \ref{bound error projection} with $\delta_N$ in the place of $\delta$ proves the first part of the theorem. To prove the second part, we use the triangle inequality to obtain
	\begin{equation} \label{triangle}
		\norm{\wh{\Aa}_{NM}\Pp_{\Dd_N} f- \Aa f }_\Ff \leq\norm{\wh{\Aa}_{NM} - \Aa_N}\norm{f}_\Dd+\norm{\Aa_{N}\Pp_{\Dd_N} f- \Aa f }_\Ff .
	\end{equation}
	To bound the first term on the right-hand side of \eqref{triangle}, we use what was just proved. To bound the second term, we use Corollary \ref{order corollary} to obtain that, for $\Aa$ and $f$ nonzero,
	\begin{equation*}
		\norm{\wh{\Aa}_{NM}\Pp_{\Dd_N} f- \Aa f }_\Ff \leq \varepsilon \norm{f}_{\Dd}+C\varepsilon\norm{\Aa}\norm{f}\lesssim \norm{\Aa}\norm{f}_{\Dd}\varepsilon.
	\end{equation*}
	If $\Aa$ or $f$ are zero, on the other hand, it is clear that the error is zero. This concludes the proof.
\end{proof}

This theorem allows us to estimate the cost of an approximation of $\mathcal{A}$ with accuracy $\varepsilon > 0$ that holds with probability $p$. In terms of the order of convergence of $\wh{\Aa}_{NM}$ to $\Aa_N$,  we have that
\begin{equation}\label{order of convergence}
	M =\mathcal{O}\qty(\gamma_N^{}  \max \set{\norm{\bm{G}_N},\norm{\bm{T}_N}}\kappa \qty(\bm{G}_N)\norm{\bm{C}_N}^2 \norm{\bm{G}_N^{-1}}^4 \log \left(\frac{
		N}{1-p}\right) \varepsilon ^{-2}).
\end{equation}
{An important consequence of Theorem \ref{OC theorem} is that it provides an \emph{explicit} lower bound on $M$ as a function of $N$, which allows one to choose $M$ adaptively depending on $N$. The ability to prescribe $M$ as an explicit increasing function of $N$, rather than merely requiring $M$ to be ``sufficiently large'' without quantification, is a rare and highly non-trivial feature in the operator approximation literature. See \cite{colbrook2024limits} for a related discussion in the context of Koopman learning.}
\begin{observation}[Extension to other data-driven methods]
	The main technical tool to obtain the bounds on the approximation error is the Bernstein inequality \ref{Bernstein Lemma}. This inequality is quite flexible and can be directly extended to other data-driven matrices. For example, \emph{residual dynamic mode decomposition} \cite{colbrook2023residual} is based on estimating $\bm{T}_N$ using
	\begin{align*}
		\wh{\bm{T}}_{NM}:= \frac{1}{M}\sum_{m=1}^M \Aa \Psi_N(\bm{x}_m)(\Aa \Psi_N(\bm{x}_m))^\dagger.
	\end{align*}
	An application of Bernstein's inequality for the covariance in Corollary \ref{Bernstein Lemma} with $\bm{g}_m=\bm{c}_m= \Aa \Psi_N(\bm{x}_m), \bm{G}=\bm{C}=\bm{T}=\bm{T}_N, \gamma =\gamma_N^{}$ gives
	\begin{align*}
		\Pp \qty[\norm{\wh{\bm{T}}_{NM}-\bm{T}_N} < \varepsilon] \geq p, \quad \text{for} \quad   M= \mathcal{O}\qty(\gamma_N^{}\norm{\bm{T}_N} \log(\frac{N}{1-p})\varepsilon ^{-2}).
	\end{align*}
	Other methods to derive these error estimates exist. For example, in \cite[Theorem 3]{colbrook2024beyond}, the authors used a concentration type inequality to obtain under some further assumptions bounds of the form
	\begin{align*}
		\Pp \qty[\norm{\wh{\bm{T}}_{NM}-\bm{T}_N}_F < \varepsilon] \geq p, \quad \text{for} \quad   M= \mathcal{O}\qty({\alpha^2_N \gamma ^{}_N} \log(\frac{N}{1-p})\varepsilon ^{-2}),
	\end{align*}
	where $\psi_j$ are assumed to be Lipschitz with constant $c_j$ and $\alpha^2_N := \sum_{j=1} ^N c_j^2$. If, for example, $\norm{T_N}$ and $c_N^{}$ are bounded in $N$ with $\lim_{N\to\infty} c_N = C$, the bound given by Bernstein's inequality is sharper.
\end{observation}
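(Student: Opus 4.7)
The observation makes two claims: (i) Bernstein's inequality applied to the residual-DMD estimator $\wh{\bm{T}}_{NM}$ gives the stated concentration bound with sample complexity $M=\mathcal{O}(\gamma_N\|\bm{T}_N\|\log(N/(1-p))\varepsilon^{-2})$, and (ii) this rate is sharper than the Lipschitz-based estimate of \cite{colbrook2024beyond} when $\|\bm{T}_N\|$ and the Lipschitz constants $c_j$ remain bounded. Claim (ii) is an immediate comparison of the two exponent expressions once (i) is in hand (with $c_j$ bounded, $\alpha_N^2=\sum_{j=1}^N c_j^2=\mathcal{O}(N)$, while $\|\bm{T}_N\|=\mathcal{O}(1)$), so I would focus the work on (i) and model it directly on the proofs of Lemmas \ref{chernoff G} and \ref{chernoff c}.

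In analogy with \eqref{g as vector} and \eqref{c as vector}, I would set
\begin{equation*}
  \bm{S}_m := \frac{1}{M}\qty(\Aa\Psi_N(\bm{x}_m)\,\Aa\Psi_N^\dagger(\bm{x}_m) - \bm{T}_N),
\end{equation*}
so that $\bm{Z}:=\sum_{m=1}^M \bm{S}_m = \wh{\bm{T}}_{NM}-\bm{T}_N$ (here I read the displayed formula in the observation as having the adjoint on the second factor, which is needed for $\wh{\bm{T}}_{NM}$ to be a matrix whose expectation is $\bm{T}_N$). The summands are independent by Assumption \ref{a3} and centred because $\bm{T}_N=\E[\Aa\Psi_N(\bm{x})\,\Aa\Psi_N^\dagger(\bm{x})]$ by definition.

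I would then apply Corollary \ref{Bernstein Lemma} with the substitutions $\bm{g}_m=\bm{c}_m=\Aa\Psi_N(\bm{x}_m)$, $\bm{G}=\bm{C}=\bm{T}=\bm{T}_N$ and $\gamma=\gamma_N$. The uniform bound $\abs{\Aa\Psi_N(\bm{x})}^2<\gamma_N$ is supplied by Assumption \ref{basis norm assumption 2}, and both the covariance and cross-covariance matrices required by the corollary reduce to $\bm{T}_N$. This yields $\Pp[\|\bm{Z}\|<\varepsilon]\geq p$ as soon as
\begin{equation*}
  M>(3\|\bm{T}_N\|+2\varepsilon)\frac{2\gamma_N}{3\varepsilon^2}\log\qty(\frac{2N}{1-p}),
\end{equation*}
which matches the announced asymptotic rate. (Note that the observation as written has $\leq 1-p$ on the probability, which I read as a typographical inversion of $\geq p$.)

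The key simplification relative to Lemma \ref{chernoff c} is that, because the same vector $\Aa\Psi_N(\bm{x}_m)$ sits on both sides of the outer product, the three matrices $\bm{G},\bm{C},\bm{T}$ demanded by Bernstein's inequality coalesce into $\bm{T}_N$, so the $\max\set{\|\bm{G}_N\|,\|\bm{T}_N\|}$ factor of Lemma \ref{chernoff c} collapses to a single $\|\bm{T}_N\|$. I do not anticipate a genuine obstacle: the only points to watch are that the uniform bound on $\Aa\Psi_N$ must be drawn from Assumption \ref{basis norm assumption 2} rather than Assumption \ref{basis norm assumption}, and that the hypotheses of Corollary \ref{Bernstein Lemma} admit the diagonal case $\bm{g}_m=\bm{c}_m$ without modification, which they do by inspection.
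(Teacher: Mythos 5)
Your proposal is correct and follows exactly the route the paper intends: the observation is itself just the direct application of Corollary \ref{Bernstein Lemma} with $\bm{g}_m=\bm{c}_m=\Aa\Psi_N(\bm{x}_m)$ and $\bm{G}=\bm{C}=\bm{T}=\bm{T}_N$, with the uniform bound supplied by Assumption \ref{basis norm assumption 2}. You have also correctly identified the two typographical slips in the statement (the missing adjoint on the second factor of $\wh{\bm{T}}_{NM}$ and the inverted probability inequality, which should read $\Pp\qty[\norm{\smash{\wh{\bm{T}}_{NM}}-\bm{T}_N}\geq\varepsilon]\leq 1-p$), and your comparison with the Lipschitz-based bound matches the paper's reasoning.
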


Let us now consider two examples.

\begin{example} \label{dense bound example}
	If $\bm{C}_N$, $\bm{G}_N$, and $\kappa(\bm{G}_N)$ are bounded and $\gamma_N^{}=\mathcal{O}(N)$, then, to obtain an error of $\varepsilon$,
	\begin{equation*}
		M=\mathcal{O}\qty(-\frac{1}{\alpha}\log(\frac{\varepsilon}{1-p})\varepsilon ^{-2-\nicefrac 1\alpha}), \quad N=\Oo(\varepsilon^{-\nicefrac 1\alpha}).
	\end{equation*}
	The cost of obtaining $\wh{\bm{A}}_{NM}$ is
	\begin{equation*}
		C(\varepsilon, p)=\Oo(M N^2+N^{3})=\Oo(MN^2)=\mathcal{O}\qty(-\log(\frac{\varepsilon}{1-p})\varepsilon ^{-2-\nicefrac 3\alpha}). \tag*{\exampleSymbol}
	\end{equation*}
\end{example}

\begin{example}
	If $\bm{G}_N$ is sparse, it is likely that $\norm{\bm{G}_N^{-1}}$ is not bounded. For example for a $1$-dimensional FEM basis $\norm{\bm{G}_N^{-1}}= \Oo\qty(h_N^{-1})= \Oo\left(N^{\nicefrac12} \right) $ and from \eqref{order of convergence}, if the remaining quantities are bounded, we obtain
	\begin{equation*}
		M=\mathcal{O}\qty(-\frac{1}{\alpha}\log(\frac{\varepsilon}{1-p})\varepsilon ^{-2-\nicefrac2\alpha}), \quad N=\Oo(\varepsilon^{-\nicefrac 1\alpha}).
	\end{equation*}
	As a result, the cost of obtaining $\wh{\bm{A}}_{NM}$ is
	\begin{equation*}
		C(\varepsilon, p)=\Oo\qty(M N)=\mathcal{O}\qty(\log(\frac{\varepsilon}{1-p})\varepsilon ^{-2-\nicefrac3\alpha}).
	\end{equation*}
	This is equal to the cost in the previous example.\exampleSymbol
\end{example}
\subsection{Accounting for measurement error within the operators} 
In this section we show the effect of measurement error in the data-driven approximation of the operator $\Aa$. In the first subsection we show how a direct implementation leads to \red{bias}. In the second subsection we show how to correct for this bias by dividing the evaluation of the basis functions into two batches.
\label{Noise section}
\subsubsection{Data-driven approximation with noisy measurements: The issue of bias} \label{Noise section2}
In practice, it is often not possible to evaluate the dictionary $(\psi_n)_{n=1}^N$ and operator applied to the dictionary functions $(\mathcal{A}\psi_n)_{n=1}^N$  precisely. The evaluations may be perturbed by measurement errors, or we may need to approximate the operator $\mathcal{A}$, as mentioned in the beginning of Section~\ref{general framework section}. This error will influence the accuracy of the approximation of $\mathcal{A}$. We will assume in what follows that instead of \eqref{info}, we only have access to
\begin{equation} \label{data22}
	\{\psi_n(\bm{x}_m)+\eta_N^{m,n}, \Aa \psi_n(\bm{x}_m) +\xi_N^{m,n}\}_{m,n=1}^{M,N},
\end{equation}
where  $(\eta_N^{m,1},\dots, \eta_N^{m, N}) =: \bm{\eta }_N^m$ and $(\xi^{m,1},\dots,\xi^{m, N}) =: \bm{\xi }_N^m$ represent the measurement or evaluation error in dictionary and operator, respectively. Given this perturbed data, a direct implementation of the data-driven approximation is 
\begin{equation*}
	\tl{{\bm{A}}}_{NM}^\top := \tl{{\bm{C}}}_{NM}\tl{{\bm{G}}}_{NM}^+,
\end{equation*}
with perturbed structure and Gram matrices given by
\begin{align}\label{noisy matrices2}
	[\tl{\bm{{C}}}_{NM}]_{ij} & :=\frac{1}{M}\sum_{m=1}^M\qty(\Aa\psi_i(\bm{x}_m)+\bm{\xi}_N^{m,i})\qty(\psi_j(\bm{x}_m)+\bm{\eta}_N^{m,j})^\dagger, \\
	[\tl{\bm{{G}}}_{NM}]_{ij} & :=\frac{1}{M}\sum_{m=1}^M\qty(\psi_i(\bm{x}_m)+\bm{\eta}_N^{m,i})\qty(\psi_j(\bm{x}_m)+\bm{\eta}_N^{m,j})^\dagger,
\end{align}
respectively. In this section, we show how the error bounds in Lemmas \ref{chernoff G} and \ref{chernoff c} can be modified to account for this noise. This leads to an analogous error bound on the data-driven operator as in Proposition~\ref{bound error projection} and Theorem~\ref{OC theorem} \red{where a bias term appears.} We make the following assumptions on the noise.
\begin{assumption} \label{noise assumption2}
	The random variables $\set{(\bm{\eta}_N^m, \bm{\xi}_N^m)}_{m=1}^M$ have mean $\bm{0}$, are symmetric, independent, and independent of $\set{\bm{x}_m }_{m=1}^M$. 
\end{assumption}{Given $\tl{\gamma }_N \geq 0 $ we will use the notation}
\begin{equation*}
	\tl{p}_N:=\mathbb{P}\qty[\abs{(\bm{\eta}_N^m,\bm{\xi}_N^m)}^2\leq \tl{\gamma}_N^{}].
\end{equation*}
\red{Let $\mathbb{E}[\cdot \mid \cdot]$ denote the conditional expectation and write
\begin{align*}
	\b{\Sigma}_{N}^\b{\eta}&:=\mathbb{E}\qty[\bm{\eta}_N^m(\bm{\eta}_N^{m})^\dagger\mid \left\{\abs{(\bm{\eta}_N^m,\bm{\xi}_N^m)}^2\leq \tl{\gamma}_N^{}, ~\forall m\right\} ],\\ \b{\Sigma}_{N}^\b{\xi}&:=\mathbb{E}\qty[\bm{\xi}_N^m(\bm{\xi}_N^{m})^\dagger\mid \left\{\abs{(\bm{\eta}_N^m,\bm{\xi}_N^m)}^2\leq \tl{\gamma}_N^{}, ~\forall m\right\}],
	\\
	\b{\Sigma}_{N}^{\b{\xi},\b{\eta}}&:=\mathbb{E}\qty[\bm{\xi}_N^m(\bm{\eta}_N^{m})^\dagger\mid\left\{\abs{(\bm{\eta}_N^m,\bm{\xi}_N^m)}^2\leq \tl{\gamma}_N^{}, ~\forall m\right\}],
\end{align*}
for the covariance matrices of the noise knowing that its norm squared is bounded by $\tl{\gamma }_N$. We also write the perturbed exact matrices as
\begin{align}\label{noisy matrices3}
\tl{\b{G}}_N & := \bm{G}_N + \red{\Sigma_N^\b{\eta }}, \quad \tl{\b{C}}_N:=  \bm{C}_N+ \red{\Sigma_N^{\b{\xi},\b{\eta}}}, \quad \tl{\b{T}}_N:= \bm{T}_N + \red{\Sigma_N^\b{\xi }}
\end{align}
and define the perturbed matrix of the operator as 
 \begin{align*}
	\tl{\b{A}}_N ^T& := \tl{\b{C}}_N \tl{\b{G}}_N^+.
\end{align*}
}

\begin{proposition}[\red{Biased} error estimate with noise] \label{noise bound2}
	Let $\Psi, \bm{\eta}_N, \bm{\xi }_N $ satisfy Assumptions \ref{assumption}, \ref{bn assumption}, and \ref{noise assumption2}, and let $p\in (0,1), \tl{p}_N \in (p^\red{\nicefrac{1}{M}},1)$. Suppose $\delta >0, \tl{\gamma }_N>0$ are such that  $\delta+\red{\norm{\b{\Sigma }_N^\b{\eta}}}< \frac{1}{2}\norm{\bm{G}^{-1}_N}^{-1}$. Then, for all
	\begin{equation*}
		M > (3 \max \set{\norm{\bm{G}_N},\norm{\bm{T}_N}}+ \red{3 \sigma _N^2}+2 \delta ) \frac{4 (\gamma_N^{}+ \tl{\gamma}_N^{})}{3 \delta ^2}\log \left(\frac{4
			N}{1-p / \tl{p}_N^{\red{M}}}\right),
	\end{equation*}
	it holds that
		\begin{equation*}
		\mathbb{P}\left[    \norm{\tl{{\Aa}}_{NM}- \Aa_N}\leq 2\sqrt{\kappa(\bm{G}_N)} \qty(1+\|\bm{C}_N\|\|\bm{G}_N^{-1}\|)\norm{\bm{G}_N}^{-1}(\delta+ \red{\sigma_N ^2})\right]\geq p,
	\end{equation*}
	where \red{$\sigma _N^2:= \max\set{\norm{ \b{\Sigma }_N^\b{\eta}}, \norm{ \b{\Sigma }_N^{\b{\xi },\b{\eta}}}}$}.
\end{proposition}

\begin{proof} 
	We begin by restricting ourselves to realizations where $\abs{\bm{\eta}^m_N}^2\leq \tl{\gamma}_N^{}, \quad \abs{\bm{\xi}^m_N}^2\leq \tl{\gamma}_N^{}$. More formally, we modify our probability space to $(\Omega, \mathcal{E}, \tl{\mathbb{P}})$ where $\tl{\mathbb{P}}$ is the conditional probability
	\begin{equation*}
		\tl{\mathbb{P}}(\mathbb{A}) := {\tl{p}_N}^{\red{-M}} {\mathbb{P}\qty[\A\cap \left\{\abs{(\bm{\eta}_N^m,\bm{\xi}_N^m)}^2\leq \tl{\gamma}_N^{}, ~\forall m\right\}]}.
	\end{equation*}
Given $k \in \set{1,\dots,N}$, by the independence of $\bm{x}_k$ of $\set{(\bm{\eta}_N^{_m}, \bm{\xi}_N^{m})}_{m=1}^M$, for all $\A \in \Bb(\R^d)$, we have
	\begin{equation*}
		\tl{\mathbb{P}}(\bm{x}_k\in \mathbb{A}) =\tl{p}_N^\red{-M} \mathbb{P}\qty[\bm{x}_k\in \mathbb{A}\cap \left\{\abs{(\bm{\eta}_N^{_m}, \bm{\xi}_N^{m})}^2\leq \tl{\gamma}_N^{}, ~\forall m\right\}]=\mathbb{P}[\bm{x}_k\in \A]\cdot 1=\mu(\A).
	\end{equation*}
	That is, also $\bm{x}_k\sim \mu$ under the probability measure $\tl{\mathbb{P}}$. Additionally, the family $\set{\bm{x}_m}_{m=1}^M$ is independent under $\tl{\mathbb{P}}$ as well. This is because, given $\A_1,\dots, \A_M \in \Bb(\R^d)$, we have
	\begin{align*}
		\tl{\mathbb{P}}(\set{\bm{x}_m\in \A_m, ~  \forall m}) & =\tl{p}_N^\red{-M}{\mathbb{P}\qty[\set{\bm{x}_m\in \A_m, ~  \forall m}\cap \left\{\abs{(\bm{\eta}_N^{_m}, \bm{\xi}_N^{m})}^2\leq \tl{\gamma}_N^{}, ~\forall m\right\}]}\\
		                                                      & =\mathbb{P}[\set{\bm{x}_m\in \A_m, ~  \forall m}]\cdot 1=\mu(\A_1)\cdots \mu (\mathbb{A}_M),
	\end{align*}
	where in the second and last equality we used the independence stated in Assumption \ref{noise assumption2}. Similarly, one can also show that $\set{\bm{x}_m}_{m=1}^M$ are independent of $\set{(\bm{\eta}_N^{_m}, \bm{\xi}_N^{m})}_{m=1}^M$ and that $\set{(\bm{\eta}_N^{_m}, \bm{\xi}_N^{m})}_{m=1}^M$ are i.i.d.\ with distribution
	\begin{equation*}
		\tl{\mathbb{P}}[(\bm{\eta}_N^{_m}, \bm{\xi}_N^{m}) \in \mathbb{D}]=\tl{p}_N^\red{-M}\mathbb{P}\qty[\set{ (\bm{\eta}_N^{_m}, \bm{\xi}_N^{m}) \in \mathbb{D}}\cap \set{\abs{(\bm{\eta}_N^{_m}, \bm{\xi}_N^{m})}^2\leq \tl{\gamma}_N^{}}], \quad m =1,\dots,M.
	\end{equation*}
	Using this result, since $(\bm{\eta}_N^m, \bm{\xi }_N^m)$ is symmetric and centred at $\bm{0}$ under $\mathbb{P}$, we obtain that $(\bm{\eta}_N^m, \bm{\xi }_N^m)$ also has mean $\bm{0}$ under $\tl{\mathbb{P}}$.    In conclusion,
	\begin{equation} \label{gcdef2}
		\set{(\bm{g}_m,\bm{c}_m)}_{m=1}^M := \set{\Psi_N(\bm{x}_m)+\bm{\eta}_N^m,\Aa\Psi_N(\bm{x}_m)+\bm{\xi}_N^m}_{m=1}^M,
	\end{equation}
	are i.i.d.\ (under $\tl{\mathbb{P}}$) with mean 
	\begin{equation} \label{gcmatdef2}
	\red{\tl{\b{G}}_N } = \tl{\E}\qty[\bm{g}\bm{g^\dagger}], \quad \red{\tl{\bm{T}}_N}= \tl{\E}\qty[\bm{c}\bm{c}^\dagger], \quad \red{\tl{\bm{C}}_N} := \tl{\E}[\bm{c}\bm{g}^\dagger],
	\end{equation}
	and by definition of $\tl{\mathbb{P}}$ and the triangle inequality for all $m \in \set{1,\dots,M} $, it holds that
	\begin{equation} \label{bernbound2}
		\tl{\mathbb{P}}[\abs{\bm{g}_m}^2 \leq 2(\gamma_N^{} + \tl{\gamma}_N^{}) \text{ and } \abs{\bm{c}_m}^2 \leq 2(\gamma_N^{} + \tl{\gamma}_N^{})]=1.
	\end{equation}
	Now, analogously to \eqref{g as vector} and \eqref{c as vector}, we define
	\begin{equation} \label{sum def2}
		\begin{split}
			\bm{S^G}_{m} & := \frac{1}{M} (\Psi_N(\bm{x}_m)+\bm{\eta}_N^m)(\Psi_N(\bm{x}_m)+\bm{\eta}_N^m)^\dagger- \red{\tl{ \bm{G}}_N},    \\
			\bm{S^C}_{m} & := \frac{1}{M} (\Aa\Psi_N(\bm{x}_m)+\bm{\xi}_N^m)(\Psi_N(\bm{x}_m)+\bm{\eta}_N^m)^\dagger- \red{\tl{ \bm{C}}_N}.
		\end{split}
	\end{equation}
	By the independence of $\bm{g}_m$ and $\bm{c}_m$ in \eqref{gcdef2}, their mean value in \eqref{gcmatdef2} and the bound in \eqref{bernbound2}, we may apply Bernstein's inequality for the covariance defined in Corollary \ref{Bernstein Lemma} to \eqref{sum def2}, which implies that
	\begin{align*}
		\tl{\mathbb{P}}\qb{\norm{\tl{\bm{G}}_{NM}-\red{\tl{ \bm{G}}_N}} \geq\delta } & \leq 2 N \exp(\frac{-M \delta ^2 /2 }{ 2(\gamma_N^{}+ \tl{\gamma}_N^{})\qty(\norm{\red{\tl{ \bm{G}}_N}} + {2 \delta }/{3})}),                             \\
		\tl{\mathbb{P}}\qb{\norm{\tl{\bm{C}}_{NM}-\red{\tl{ \bm{C}}_N}} \geq\delta } & \leq 2 N \exp(\frac{-M \delta ^2 /2 }{ 2(\gamma_N^{}+ \tl{\gamma}_N^{})\qty(\max \set{\norm{\red{\tl{ \bm{T}}_N}}, \norm{\red{\tl{\bm{G}}_N}}} +{2 \delta }/{3})}) .
	\end{align*}
	Solving for $M$ we obtain that, for $M$ as in the problem statement    \begin{equation} \label{chernoff noise2}
		p\leq\tl{p}_N^M\tl{\mathbb{P}}\qb{\norm{\tl{\bm{G}}_{NM}-\red{\tl{ \bm{G}}_N}} <\delta, \text{ and } \norm{\tl{\bm{C}}_{NM}-\red{\tl{ \bm{C}}_N}} <\delta}.
	\end{equation}
	Now, by definition of $\tl{\mathbb{P}}$, we have that $\tl{p}_N^{M}\tl{\mathbb{P}}\leq \mathbb{P}$. Using this in \eqref{chernoff noise2} shows that, for $M$ as above
	\begin{equation}\label{lemma bound2}
		p\leq\mathbb{P}\qb{\norm{\tl{\bm{G}}_{NM}-\red{\tl{ \bm{G}}_N}} <\delta, \text{ and } \norm{\tl{\bm{C}}_{NM}-\red{\tl{ \bm{C}}_N}} <\delta}.
	\end{equation}
	The proof now follows the same lines as the proof of Proposition \ref{bound error projection}.
We restrict ourselves to the region of the probability space where \eqref{lemma bound2} holds. To simplify the notation, write
\begin{equation*}
		\bm{\delta}_{\bm{G}} := \tl{\bm{G}}_{NM}-{\bm{G}}_{N}, \quad \bm{\delta}_{\bm{C}} := \tl{\bm{C}}_{NM}-{\bm{C}}_{N}.
	\end{equation*} 
	By \eqref{noisy matrices3}, the triangle inequality and \eqref{lemma bound2}, we have
\begin{align*}
	\norm{\delta _{\b{G}}} \leq \delta +\red{\norm{\b{\Sigma }_N^\b{\eta}}<\frac{1}{2} \norm{\bm{G}_N^{-1}}^{-1}} , \quad \norm{\delta _{\b{C}}} \leq \delta +\red{\norm{\b{\Sigma }_N^{\b{\xi },\b{\eta}}}}.
\end{align*}
As a result, $\tl{\bm{G}}_{NM}$ is invertible. Write $\bm{\delta}_{\bm{G}^{-1}} := \tl{\bm{G}}_{NM}^{-1}-{\bm{G}}_{N}^{-1}$. Using the Neumann series as in  \eqref{neumann}
\begin{align}\label{lemma bound3}
	\norm{\delta_{\b{G}^{-1}}}\leq \frac{\norm{\bm{G}^{-1}_N}^2 \norm{\delta_{\b{G}}}}{1-\norm{\bm{G}_N^{-1}}\norm{\delta_{\b{G}}}}< 2\norm{\bm{G}^{-1}_N}^2 \norm{\delta_{\b{G}}}.
\end{align}
Let $\tl{\Tt} := \tl{\Aa}_{NM}-{\Aa}_{N}$. Then, its matrix representation is given by
	\begin{equation*}
		\qty(\tl{\bm{T}}^{\Psi_N})^T=\tl{\bm{C}}_{NM}\tl{\bm{G}}_{NM}^{-1}-\bm{C}_N \bm{G}_N^{-1}=\bm{C}_N\bm{\delta}_{\bm{G^{-1}}}+\bm{\delta}_{\bm{C}}\bm{G}_N^{-1}+\bm{\delta }_{\bm{C}}\bm{\delta }_{\bm{G}^{-1}}.
	\end{equation*}
	As a result, by \eqref{lemma bound3} and collecting the terms, we have
	\begin{equation*}
		\norm{\tl{\Tt}^{\Psi_N}}\leq 2\qty(\norm{\delta_\b{C} }+\|\bm{C}_N\|\|\bm{G}_N^{-1}\|\norm{\delta_\b{G}})\norm{\bm{G}_N^{-1}}.
	\end{equation*}
	Now, applying Lemma \ref{norm matrix operator lemma} and the definition of $\sigma_N ^2$  yields
	\begin{equation*}
		\norm{\tl{\Tt}} \leq 2\sqrt{\kappa(\bm{G}_N)}\qty(1+\|\bm{C}_N\|\|\bm{G}_N^{-1}\|)\norm{\bm{G}_N^{-1}}(\delta +\red{\sigma_N^2}) .
	\end{equation*}
	This concludes the proof.
\end{proof}
Proposition \ref{noise bound2} shows that, in this setting and stressing the dependence of $\delta $ on $M$, the error is of the form $\lambda_N(\delta_M+\sigma _N^2)$. As a result, the error will not be small unless the variance of $\b{\eta }$ and covariance of $\b{\eta }$ and $\b{\xi }$ are small. Additionally, the mass matrix $\tl{\b{G}}_{NM}$ may cease to be invertible unless $ \delta +\red{\norm{\b{\Sigma }_N^\b{\eta}}<\norm{\bm{G}_N^{-1}}^{-1}}$. This is in contrast to Proposition \ref{OC theorem} and Proposition \ref{noise bound} of the \red{next subsection} where the error is of the form $\lambda_N \delta_M$. In these cases, the error can be made small by taking $M$ large enough. 
\begin{example}
Write $\b{I} \in \R^{n \times n}$ for the identity matrix and consider the simple case where the data driven algorithm without noise is exact with $\b{C}_N= \wh{\b{C}}_{NM}=c \b{I}, \b{G}_N= \wh{\b{G}}_{NM}=g \b{I}$ for some $c \in \R, g>0$. Let $\b{\Sigma }_N^{\b{\eta} }= \sigma ^2_\b{\eta }\b{I}$  and $\b{\Sigma }_N^{\b{\xi},\b{\eta } }= v_{\b{\xi},\b{\eta } }\b{I}$ for some $\sigma_{\b{\eta }} ^2>0, v_{\b{\xi},\b{\eta } }\in \R$ .  Then, we have that 
\begin{align*}
	\lim_{M \to \infty} \tl{\b{G}}_{NM}= (g+\sigma_\b{\eta } ^2)\b{I} \quad \text{and } \lim_{M \to \infty} \tl{\b{C}}_{NM}= (c+v_{\b{\xi},\b{\eta }})\b{I}.
\end{align*}
As a result, a calculation shows that
\begin{align*}
	\lim_{M \to \infty}\tl{\b{A}}_{NM}=\b{A}_N+ \frac{g v_{\b{\xi},\b{\eta } }-c\sigma ^2_\b{\eta }  }{g(g+\sigma ^2_\b{\eta })}\b{I}.
\end{align*}
In consequence, as $M$ becomes large we expect an error of the form 
\begin{align*}
	\norm{\tl{\b{A}}_{NM}-\b{A}_N}\approx \frac{\abs{v_{\b{\xi},\b{\eta } }}}{g+\sigma ^2_\b{\eta }}+ \frac{\abs{c}\sigma ^2_\b{\eta } }{g(g+\sigma ^2_\b{\eta })},
\end{align*}
which is independent of $M$ and is not small unless the variance terms $\sigma ^2_\b{\eta }$ and $v_{\b{\xi},\b{\eta } }$ are small relative to $\b{G}_N$ and $\b{C}_N$. 
\end{example}

\subsection{Unbiased estimation with noisy measurements via batching} \label{Noise section3}
In this section we show how to correct for the bias in the data-driven approximation of the operator $\Aa$. The idea is to evaluate the dictionary functions twice independently, and then use the average of these evaluations to form the data-driven approximation. Consider the noisy maps
\begin{align*}
	\b{x} \to \psi_n(\bm{x})+\eta_N^{\b{x},n}, \quad \b{x} \to \Aa \psi_n(\bm{x})+\xi_N^{\b{x},n}, \quad n=1,\dots,N,
\end{align*}
where $\bm{\eta}_N^{\b{x}}= (\eta_N^{\b{x},1},\ldots,\eta_N^{\b{x},N})$ and $\bm{\xi}_N^{\b{x}}=(\xi_N^{\b{x},1},\ldots,\xi_N^{\b{x},N})$ represent the measurement or evaluation error in dictionary and operator, respectively.
Consider now $\b{x}_1, \cdots, \b{x}_M \sim \mu $. For each $\b{x}_m$ we perform these noisy evaluations, evaluating the dictionary twice independently to obtain the data
\begin{equation} \label{data2}
	\{\psi_n(\bm{x}_m)+\eta_N^{m,n}, \psi_n(\bm{x}_m)+\co{\eta}_N^{m,n},\Aa \psi_n(\bm{x}_m)+\xi_N^{m,n}\}_{m,n=1}^{M,N}
\end{equation}  
where, we wrote $\bm{\eta }_N^m:=(\eta_N^{m,1},\dots, \eta_N^{m, N}), \co{\bm{\eta}}_N^m:=(\co{\eta}_N^{m,1},\dots, \co{\eta}_N^{m, N})$, and $\bm{\xi }_N^m:=(\xi_N^{m,1},\dots, \xi_N^{m, N})$. Given the perturbed data \eqref{data2}, we form the data-driven approximation
\begin{equation*}
	\co{{\bm{A}}}_{NM}^\top := \co{{\bm{C}}}_{NM}\co{{\bm{G}}}_{NM}^+,
\end{equation*}
with perturbed structure and Gram matrices defined respectively by
\begin{align}\label{noisy matrices}
	[\co{\bm{{C}}}_{NM}]_{ij} & :=\frac{1}{M}\sum_{m=1}^M\qty(\Aa\psi_i(\bm{x}_m)+{\xi}_N^{m,i})\qty(\psi_j(\bm{x}_m)+\co{{\eta}}_N^{m,j})^\dagger, \\
	[\co{\bm{{G}}}_{NM}]_{ij} & :=\frac{1}{M}\sum_{m=1}^M\qty(\psi_i(\bm{x}_m)+{\eta}_N^{m,i})\qty(\psi_j(\bm{x}_m)+\co{{\eta}}_N^{m,j})^\dagger.
\end{align}
 In the same line as Assumption \ref{noise assumption2} we make the following assumption on the noise
\begin{assumption} \label{noise assumption}
  We assume the following:
  \begin{enumerate}[label=(\alph*), ref=\theassumption(\alph*)]  \setlength{\itemsep}{0ex}
	  \item The random variables $\set{(\bm{\eta}_N^{_m}, \bm{\xi}_N^{m},\co{\bm{\eta }}_N^m)}_{m=1}^M$  have mean $\bm{0}$, are symmetric, independent, and independent of $\set{\bm{x}_m }_{m=1}^M$.
	  \item The random variables $\set{(\bm{\eta}_N^{_m}, \bm{\xi}_N^{m})}_{m=1}^M$ and $\set{\co{\b{\eta }}_N^m }$ are independent.
	  \item The random variables $\bm{\eta}_N^m$ and $\co{\bm{\eta}}_N^m$ are identically distributed.
  \end{enumerate}
\end{assumption}
{Given $\co{\gamma }_N \geq 0 $ we will use the notation}
\begin{equation*}
	\co{p}_N:=\mathbb{P}\qty[\abs{(\bm{\eta}_N^m,\bm{\xi}_N^m,\co{\bm{\eta}}_N^m)}^2\leq \co{\gamma}_N^{}],
\end{equation*}
and define the conditioned noise covariances
\begin{align*}
	\b{\Sigma}_{N}^\b{\eta}&:=\mathbb{E}\qty[\bm{\eta}_N^m(\bm{\eta}_N^m)^\dagger\mid \left\{\abs{(\bm{\eta}_N^{_m}, \bm{\xi}_N^{m},\co{\bm{\eta }}_N^m)}^2\leq \co{\gamma}_N^{}, ~\forall m\right\}],\\
	\b{\Sigma}_{N}^\b{\xi}&:=\mathbb{E}\qty[\bm{\xi}_N^m(\bm{\xi}_N^m)^\dagger\mid \left\{\abs{(\bm{\eta}_N^{_m}, \bm{\xi}_N^{m},\co{\bm{\eta }}_N^m)}^2\leq \co{\gamma}_N^{}, ~\forall m\right\}].
\end{align*}
Note that since $\bm{\eta}_N^m$ and $\co{\bm{\eta}}_N^m$ are identically distributed, the conditioned covariance of $\co{\bm{\eta}}_N^m$ equals $\b{\Sigma}_{N}^\b{\eta}$.

Assumption \ref{noise assumption} is satisfied by Gaussian measurement error and, by studying the cumulative distribution function of the $\chi^2$ distribution, an explicit expression for $\co{p}_N $ can be given. We do so in Example~\ref{noise example 1} below, after adapting our previous estimates to the setting of noisy data, starting with the result analogous to Proposition~\ref{bound error projection}.

\begin{proposition}[Error estimate noise] \label{noise bound}
	Let $\Psi, \bm{\eta}_N, \bm{\xi }_N $ satisfy Assumptions \ref{assumption}, \ref{bn assumption}, and \ref{noise assumption}, and let $0<\delta < \frac{1}{2}\norm{\bm{G}^{-1}_N}^{-1}$ and $p\in (0,1), \co{p}_N \in (p^\red{\nicefrac{1}{M}},1)$. Then, for all
	\begin{equation*}
		M > \qty(3 \max \set{\norm{\bm{G}_N}+\norm{\b{\Sigma}_{N}^{\b{\eta}}},\norm{\bm{T}_N+ {\Sigma_N^\b{\xi }}}}+2 \delta ) \frac{4 (\gamma_N^{}+ \co{\gamma}_N^{})}{3 \delta ^2}\log \left(\frac{4
			N}{1-p / \co{p}_N^{M}}\right),
	\end{equation*}
	it holds that
	\begin{equation*}
		\mathbb{P}\left[    \norm{\co{{\Aa}}_{NM}- \Aa_N}\leq 2\sqrt{\kappa(\bm{G}_N)} \qty(1+\|\bm{C}_N\|\|\bm{G}_N^{-1}\|)\norm{\bm{G}_N^{-1}}\delta\right]\geq p,
	\end{equation*}
\end{proposition}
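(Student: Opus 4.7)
The plan is to mirror the proof of Proposition \ref{bound error projection} while replacing Lemmas \ref{chernoff G} and \ref{chernoff c} by noisy analogues. Concretely, I would introduce $\tl{\bm{\Psi}}_m := \Psi_N(\bm{x}_m) + \bm{\eta}_N^m$ and $\tl{\bm{\Aa\Psi}}_m := \Aa\Psi_N(\bm{x}_m) + \bm{\xi}_N^m$ so that
\begin{equation*}
    \tl{\bm{G}}_{NM} = \frac{1}{M}\sum_{m=1}^M \tl{\bm{\Psi}}_m \tl{\bm{\Psi}}_m^\dagger, \qquad \tl{\bm{C}}_{NM} = \frac{1}{M}\sum_{m=1}^M \tl{\bm{\Aa\Psi}}_m \tl{\bm{\Psi}}_m^\dagger,
\end{equation*}
and write $\tl{\bm{S}}_m^G := \tfrac{1}{M}(\tl{\bm{\Psi}}_m\tl{\bm{\Psi}}_m^\dagger - \bm{G}_N)$ and the analogous $\tl{\bm{S}}_m^C$. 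Since $\bm{\eta}_N^m$ is mean-zero and independent of $\bm{x}_m$, the cross terms $\Psi_N(\bm{x}_m)(\bm{\eta}_N^m)^\dagger$ and $\bm{\eta}_N^m\Psi_N(\bm{x}_m)^\dagger$ contribute zero in expectation, reducing the analysis to centered random matrices plus a purely noise-dependent remainder.

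The key step is then a truncation argument. I would define the events $Q_m := \{|(\bm{\eta}_N^m,\bm{\xi}_N^m)|^2 \leq \tl{\gamma}_N\}$, each of probability $\tl{p}_N$ and (by Assumption \ref{noise assumption}) independent across $m$ and of the $\bm{x}_m$. On the intersection $Q := \bigcap_m Q_m$ one has the deterministic bound $|\tl{\bm{\Psi}}_m|^2 \leq 2(\gamma_N+\tl{\gamma}_N)$, and by the symmetry of the noise the symmetric truncation $Q_m$ preserves mean-zero, so Corollary \ref{Bernstein Lemma} applies to the truncated sums with variance parameter $2(\gamma_N+\tl{\gamma}_N)$ in place of $\gamma_N$. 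This yields, on $Q$ and with conditional probability at least $p/\tl{p}_N^{M}$... but to avoid the exponentially small factor $\tl{p}_N^M$, I would instead apply Bernstein directly to the bounded variables $\tl{\bm{S}}_m^G \mathbb{1}_{Q_m}$ (which are i.i.d.\ and mean-zero by symmetry), and then pay the price of the untruncated mass in a single union bound step: for every $m$, $\Pp[\tl{\bm{S}}_m^G \neq \tl{\bm{S}}_m^G\mathbb{1}_{Q_m}] = 1 - \tl{p}_N$. The natural way to package this is the inequality $\Pp[A] \leq \Pp[A\mid Q_m \text{ for all } m]$ rescaled by $\tl{p}_N^{-1}$ in the exponential of Bernstein, which is precisely where the factor $1 - p/\tl{p}_N$ in place of $1-p$ appears in the logarithm of the sample complexity.

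Having established
\begin{equation*}
    \Pp\qb{\norm{\tl{\bm{G}}_{NM} - \bm{G}_N} < \delta} \geq \sqrt{p}, \qquad \Pp\qb{\norm{\tl{\bm{C}}_{NM} - \bm{C}_N} < \delta} \geq \sqrt{p}
\end{equation*}
for $M$ as in the statement (with the factor $4N/(1-p/\tl{p}_N)$ accounting for both the tail-doubling union bound across $\bm{G}$ and $\bm{C}$ and the truncation cost), the rest of the argument is copy-paste from Proposition \ref{bound error projection}: the Neumann series gives invertibility of $\tl{\bm{G}}_{NM}$ and $\norm{\tl{\bm{G}}_{NM}^{-1} - \bm{G}_N^{-1}} \leq 2\norm{\bm{G}_N^{-1}}^2\delta$, the decomposition
\begin{equation*}
    \tl{\bm{C}}_{NM}\tl{\bm{G}}_{NM}^{-1} - \bm{C}_N\bm{G}_N^{-1} = \bm{C}_N\bm{\delta}_{\bm{G}^{-1}} + \bm{\delta}_{\bm{C}}\bm{G}_N^{-1} + \bm{\delta}_{\bm{G}^{-1}}\bm{\delta}_{\bm{C}}
\end{equation*}
yields the matrix norm estimate, and Lemma \ref{norm matrix operator lemma} converts it to the operator norm bound with the prefactor $\sqrt{\kappa(\bm{G}_N)}$.

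The main obstacle, and the step that requires the most care, is the bias introduced by the term $\bm{\eta}_N^m(\bm{\eta}_N^m)^\dagger$ (and analogously $\bm{\xi}_N^m(\bm{\eta}_N^m)^\dagger$ for $\tl{\bm{C}}_{NM}$), whose expectation is in general nonzero and therefore prevents a direct application of Bernstein. This is exactly what the symmetry half of Assumption \ref{noise assumption} is for: combining symmetry with the symmetric truncation onto $Q_m$ gives a centered, bounded sequence to which Bernstein applies cleanly, and the probability $\tl{p}_N$ of remaining in $Q_m$ then determines the logarithmic blow-up in the sample complexity.
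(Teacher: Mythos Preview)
Your plan coincides with the paper's: restrict to the event $Q=\bigcap_m Q_m$ on which every noise vector is bounded by $\tl{\gamma}_N$, apply Corollary~\ref{Bernstein Lemma} with bound parameter $\gamma_N+\tl{\gamma}_N$, and then transfer back to the unconditional probability. The paper packages the first step by passing to the conditional measure $\tl{\mathbb{P}}(\cdot):=\mathbb{P}(\cdot\mid Q)$ and verifying carefully that under $\tl{\mathbb{P}}$ the samples $\bm{x}_m$ remain i.i.d.\ with law $\mu$, the noise pairs remain i.i.d., mean-zero, and independent of the $\bm{x}_m$; it then applies Bernstein under $\tl{\mathbb{P}}$ to obtain confidence $p/\tl{p}_N$ and concludes via the inequality $\tl{p}_N\,\tl{\mathbb{P}}\le \mathbb{P}$. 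Your indicator-truncation variant is a repackaging of this same conditioning (your ``rescaling by $\tl{p}_N^{-1}$'' heuristic is precisely the last inequality), and your Neumann-series/decomposition finish is verbatim Proposition~\ref{bound error projection}.

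One point deserves correction in your last paragraph: symmetry of $(\bm{\eta}_N^m,\bm{\xi}_N^m)$ does \emph{not} annihilate $\E[\bm{\eta}_N^m(\bm{\eta}_N^m)^\dagger]$ --- that quantity is the noise covariance, invariant under $\bm{\eta}\mapsto-\bm{\eta}$ and strictly positive unless the noise is degenerate. What symmetry actually buys is that the \emph{first} moment of the truncated noise remains zero, so that the cross terms $\Psi_N(\bm{x}_m)(\bm{\eta}_N^m)^\dagger$ and $\Aa\Psi_N(\bm{x}_m)(\bm{\eta}_N^m)^\dagger$ stay centred under $\tl{\mathbb{P}}$; the paper invokes symmetry for exactly this and then asserts \eqref{gcmatdef} directly. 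You have located the delicate step correctly, but the mechanism you name for resolving the quadratic bias is not the right one --- and in fact, as stated, neither your argument nor the paper's addresses how $\E_{\tl{\mathbb{P}}}[\bm{\eta}_N^m(\bm{\eta}_N^m)^\dagger]$ disappears from the mean of $\tl{\bm{G}}_{NM}$.
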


\begin{proof}
	The proof is similar to that of Proposition \ref{noise bound2}. We  modify our probability space to $(\Omega, \mathcal{E}, \co{\mathbb{P}})$ where $\co{\mathbb{P}}$ is the conditional probability
	\begin{equation*}
		\co{\mathbb{P}}(\mathbb{A}) :=\co{p}_N^\red{-M} \mathbb{P}\qty[\A\cap \left\{\abs{(\bm{\eta}_N^{_m}, \bm{\xi}_N^{m},\co{\bm{\eta }}_N^m)}^2\leq \co{\gamma}_N^{}, ~\forall m\right\}].
	\end{equation*}
	Given $k \in \set{1,\dots,N}$, by the independence of $\bm{x}_k$ of $\set{(\bm{\eta}_N^{_m}, \bm{\xi}_N^{m},\co{\bm{\eta }}_N^m)}_{m=1}^M$, for all $\A \in \Bb(\R^d)$, we have
	\begin{equation*}
		\co{\mathbb{P}}(\bm{x}_k\in \mathbb{A}) =\co{p}_N^\red{-M} \mathbb{P}\qty[\bm{x}_k\in \mathbb{A}\cap \left\{\abs{(\bm{\eta}_N^{_m}, \bm{\xi}_N^{m},\co{\bm{\eta }}_N^m)}^2\leq \co{\gamma}_N^{}, ~\forall m\right\}]=\mathbb{P}[\bm{x}_k\in \A]\cdot 1=\mu(\A).
	\end{equation*}
	That is, also $\bm{x}_k\sim \mu$ under the probability measure $\co{\mathbb{P}}$. Additionally, the family $\set{\bm{x}_m}_{m=1}^M$ is independent under $\co{\mathbb{P}}$ as well. This is because, given $\A_1,\dots, \A_M \in \Bb(\R^d)$, we have
	\begin{align*}
		\co{\mathbb{P}}(\set{\bm{x}_m\in \A_m, ~  \forall m}) & =\co{p}_N^\red{-M}{\mathbb{P}\qty[\set{\bm{x}_m\in \A_m, ~  \forall m}\cap \left\{\abs{(\bm{\eta}_N^{_m}, \bm{\xi}_N^{m},\co{\bm{\eta }}_N^m)}^2\leq \co{\gamma}_N^{}, ~\forall m\right\}]}\\
		                                                      & =\mathbb{P}[\set{\bm{x}_m\in \A_m, ~  \forall m}]\cdot 1=\mu(\A_1)\cdots \mu (\mathbb{A}_M),
	\end{align*}
	where in the second and last equality we used the independence stated in Assumption \ref{noise assumption}. Similarly, one can also show that $\set{\bm{x}_m}_{m=1}^M$ are independent of $\set{(\bm{\eta}_N^{_m}, \bm{\xi}_N^{m},\co{\bm{\eta }}_N^m)}_{m=1}^M$, that $\set{(\bm{\eta}_N^{_m}, \bm{\xi}_N^{m})}_{m=1}^M$ are independent of $\set{\co{\bm{\eta }}_N^m }_{m=1}^M$, and that $\set{(\bm{\eta}_N^{_m}, \bm{\xi}_N^{m},\co{\bm{\eta }}_N^m)}_{m=1}^M$ are i.i.d.\ with distribution
	\begin{equation*}
		\co{\mathbb{P}}[(\bm{\eta}_N^{_m}, \bm{\xi}_N^{m},\co{\bm{\eta }}_N^m) \in \mathbb{D}]=\co{p}_N^\red{-M}\mathbb{P}\qty[\set{ (\bm{\eta}_N^{_m}, \bm{\xi}_N^{m},\co{\bm{\eta }}_N^m) \in \mathbb{D}}\cap \set{\abs{(\bm{\eta}_N^{_m}, \bm{\xi}_N^{m},\co{\bm{\eta }}_N^m)}^2\leq \co{\gamma}_N^{}}], \quad m =1,\dots,M.
	\end{equation*}
	Using this result, since $(\bm{\eta}_N^{_m}, \bm{\xi}_N^{m},\co{\bm{\eta }}_N^m)$ is symmetric and centred at $\bm{0}$ under $\mathbb{P}$, we obtain that $(\bm{\eta}_N^{_m}, \bm{\xi}_N^{m},\co{\bm{\eta }}_N^m)$ also has mean $\bm{0}$ under $\co{\mathbb{P}}$.    In conclusion,
	\begin{equation} \label{gcdef}
		\set{(\bm{g}_m,\bm{c}_m,\co{\b{g}}_m)}_{m=1}^M := \set{\Psi_N(\bm{x}_m)+\bm{\eta}_N^m,\Aa\Psi_N(\bm{x}_m)+\bm{\xi}_N^m,\Psi_N(\bm{x}_m)+\co{\bm{\eta}}_N^m}_{m=1}^M,
	\end{equation}
	are i.i.d.\ (under $\co{\mathbb{P}}$) with
	\begin{equation} \label{gcmatdef}
		\bm{G}_N = \E\qty[\bm{g}_m\qty(\co{\bm{g} }_m)^\dagger], \quad \bm{T}_N+ \red{\Sigma_N^\b{\xi }}= \E\qty[\bm{c}_m\bm{c}_m^\dagger], \quad \bm{C}_N := \E\qty[\bm{c}_m\qty(\co{\bm{g} }_m)^\dagger],
	\end{equation}
	and by definition of $\co{\mathbb{P}}$ and the triangle inequality for all $m \in \set{1,\dots,M} $, it holds that
	\begin{equation} \label{bernbound}
		\co{\mathbb{P}}[\abs{\bm{g}_m}^2 \leq 2(\gamma_N^{}+ \co{\gamma}_N^{}) \text{ and } \abs{\bm{c}_m}^2 \leq 2(\gamma_N^{}+ \co{\gamma}_N^{})\text{ and } \abs{\co{\bm{g}}_m}^2 \leq 2(\gamma_N^{}+ \co{\gamma}_N^{})]=1.
	\end{equation}
	Now, analogously to \eqref{g as vector} and \eqref{c as vector}, we define
	\begin{equation} \label{sum def}
		\begin{split}
			\bm{S^G}_{m} & := \frac{1}{M} (\Psi_N(\bm{x}_m)+\bm{\eta}_N^m)(\Psi_N(\bm{x}_m)+\co{\bm{\eta}}_N^m)^\dagger - \bm{G}_N =\bm{g}_m(\co{\bm{g}}_m)^\dagger - \bm{G}_N,    \\
			\bm{S^C}_{m} & := \frac{1}{M} (\Aa\Psi_N(\bm{x}_m)+\bm{\xi}_N^m)(\Psi_N(\bm{x}_m)+\co{\bm{\eta}}_N^m)^\dagger - \bm{C}_N=\bm{c}_m(\co{\bm{g}}_m)^\dagger- \bm{C}_N.
		\end{split}
	\end{equation}
	By the independence of $\bm{g}_m$ and $\bm{c}_m$ in \eqref{gcdef}, their mean value in \eqref{gcmatdef} and the bound in \eqref{bernbound}, we may apply Bernstein's inequality for the covariance defined in Corollary \ref{Bernstein Lemma} to \eqref{sum def}, which implies that
	\begin{align*}
		\co{\mathbb{P}}\qb{\norm{\co{\bm{G}}_{NM}-\bm{G}_N} \geq\delta } & \leq 2 N \exp(\frac{-M \delta ^2 /2 }{ 2(\gamma_N^{}+ \co{\gamma}_N^{})\qty(\norm{\bm{G}_N+\b{\Sigma}_{N}^{\b{\eta}}} + {2 \delta }/{3})}),                             \\
		\co{\mathbb{P}}\qb{\norm{\co{\bm{C}}_{NM}-\bm{C}_N} \geq\delta } & \leq 2 N \exp(\frac{-M \delta ^2 /2 }{ 2(\gamma_N^{}+ \co{\gamma}_N^{})\qty(\max \set{\norm{\bm{T}_N+ {\Sigma_N^\b{\xi }}}, {\norm{\bm{G}_N+\b{\Sigma}_{N}^{{\b{\eta}}}}}} +{2 \delta }/{3})}) .
	\end{align*}
	Solving for $M$ we obtain that, for $M$ as in the problem statement    \begin{equation} \label{chernoff noise}
		p\leq\co{p}_N^\red{M}\co{\mathbb{P}}\qb{\norm{\co{\bm{G}}_{NM}-\bm{G}_N} <\delta, \text{ and } \norm{\co{\bm{C}}_{NM}-\bm{C}_N} <\delta}.
	\end{equation}
	Now, by definition of $\co{\mathbb{P}}$, we have that $\co{p}_N^\red{M}\co{\mathbb{P}}\leq \mathbb{P}$. Using this in \eqref{chernoff noise} shows that, for $M$ as above
	\begin{equation*}
		p\leq\mathbb{P}\qb{\norm{\co{\bm{G}}_{NM}-\bm{G}_N} <\delta, \text{ and } \norm{\co{\bm{C}}_{NM}-\bm{C}_N} <\delta}.
	\end{equation*}
	The result follows identically to the proof of Proposition \ref{bound error projection}.
\end{proof}

The generalisation of Theorem \ref{OC theorem} to the case with noise can be proved in the same way as said theorem, but this time using Proposition~\ref{noise bound} instead of Proposition~\ref{bound error projection}. This gives the following result.

\begin{theorem}[Order of convergence with noise]\label{OC theorem noise}
	Let $\Psi, \bm{\eta}_N, \bm{\xi }_N $ satisfy Assumptions \ref{assumption}, \ref{bn assumption}, and \ref{noise assumption} and $N \in \N$ be arbitrary. Define $\rho_N:= \sqrt{\kappa(\bm{G}_N)} \qty(1+\|\bm{C}_N\|\|\bm{G}_N^{-1}\|)$, let $\varepsilon \in (0, \rho_N)$ be arbitrary and write $\delta_N:=\varepsilon/\qty(2\rho_N\norm{\bm{G}_N^{-1}})$. Then, for all $p\in (0,1), \co{p}_N\in(p^\red{\nicefrac{1}{M}},1)$ and
	\begin{equation*}
		M > \qty(3 \max \set{\norm{\bm{G}_N}+\norm{\b{\Sigma}_{N}^{\b{\eta}}},\norm{\bm{T}_N+ {\Sigma_N^\b{\xi }}}}+2 \delta_N ) \frac{4(\gamma_N^{}+ \co{\gamma}_N^{})}{3 \delta_N ^2}\log \left(\frac{4
			N}{1-p / \co{p}_N ^{{M}}}\right),
	\end{equation*}
	it holds that
	\begin{equation*}
		\mathbb{P}\left[	\norm{\co{\Aa}_{NM}- \Aa_N}\leq \varepsilon \right]\geq p.
	\end{equation*}
	Furthermore, if $f \in \Ff_\infty$ and
	$\norm{(\Pp_{\Ff_N}- Id)\phi}_\Ff = \mathcal{O}( N^{-\alpha})$ for all $\phi \in \Ff$, or if $f\in \Dd$ and additionally $ \norm{(\Pp_{\Dd_N}- Id)f}_\Dd = \mathcal{O}( N^{-\alpha}) $, then, for $N =\mathcal{O}(\varepsilon^{-\nicefrac 1\alpha})$ and $M$ as defined above, it holds that
	\begin{equation*}
		\mathbb{P}\left[{\norm{\co{\Aa}_{NM}\Pp_{\Dd_N} f- \Aa f }_\Ff}\leq \norm{\Aa}\norm{f}_{\Dd}\varepsilon \right] \geq p.
	\end{equation*}
\end{theorem}
Thus, Theorem \ref{OC theorem noise} shows an order of convergence of $\co{\Aa }_{NM} $ to $\Aa_N$ of
\begin{equation}\label{order of convergence noise}
	\begin{split}
	M &=\mathcal{O}\left((\gamma_N^{}+\co{\gamma}_N^{})  \max \set{\norm{\bm{G}_N}+\norm{\b{\Sigma}_{N}^{\b{\eta}}},\norm{\bm{T}_N+ {\Sigma_N^\b{\xi }}}}\kappa \qty(\bm{G}_N)\norm{\bm{C}_N}^2 \norm{\bm{G}_N^{-1}}^4\right.\\&\cdot\left.   \log \left(\frac{
		N}{1-p/\co{p}_N^M}\right) \varepsilon ^{-2}\right).
	\end{split}
\end{equation}

We now discuss this result in the context of Gaussian noise.

\begin{example}\label{noise example 1}
	Let $(\bm{\eta}_N, \bm{\xi}_N,\co{\bm{ \eta}}_N)\sim \mathcal{N}(0, \sigma^2\Id_{3N\times 3N})$. Then, $\sigma^{-2}\abs{(\bm{\eta}_N^m,\bm{\xi}_N^m,\co{\bm{\eta}}_N^m)}^2\sim \chi_{3N}^2$ . Suppose the basis functions are bounded by $1$ so  $\Psi_N $ satisfies Assumption \ref{basis norm assumption} with  $ \gamma_N^{}=N$. Let $M>1$ and write $\co{\gamma}_N^{}:=3\sigma^2 N \red{\log(M)}$. Write $F_k$ for the CDF of the $\chi^2_k$ distribution.   Then, for all $m$
	\begin{equation*}
		1-\co{p}_N=  1-	F_{3N}(3N \log(M)) \leq\left(\log(M) e^{1-\log(M)}\right)^{\nicefrac{3N}{2}}=(\nicefrac{\log(M)e}{M})^{\nicefrac {3N}{2}}
	\end{equation*}
	where in the inequality we used a tail bound property of the $\chi_k^2$ distribution
	\begin{align*}
				1-F_k(z k) \leq\left(z e^{1-z}\right)^{k / 2}, \quad\forall z>1, k \in \N.
	\end{align*}
	As a result, by a Taylor expansion we obtain that for $M \gg 1$ 
	\begin{align*}
		\co{p}_N^\red{M} \geq \qty(1-\qty(\frac{\log(M)e}{M})^{\nicefrac {3N}{2}})^M= 1-M\left(\frac{e \log M}{M}\right)^{3 N / 2}+\mathcal{O}\left(M^2\left(\frac{\log M}{M}\right)^{3 N}\right) \approx 1.
	\end{align*}
	This shows that, in this case, the effect of the noise on the theoretical error bound is small, effectively multiplying it by a factor of $\qty(1+ \sigma ^2)^2 \log(M)$. To see this compare \eqref{order of convergence} with \eqref{order of convergence noise} where $\co{\gamma}_N^{} = \sigma ^2\gamma _N $ and $\norm{\b{\Sigma }^\b{\eta }_N} < \sigma ^2 $. \exampleSymbol
\end{example}
\begin{observation}
The motivation behind the double evaluation of the basis functions is so that the data driven matrices $\co{\b{G}}_{NM}, \co{\b{C}}_{NM}$ have the correct expectation $\b{G}_N, \b{C}_N$. If it is possible to evaluate the basis exactly so that $\b{\eta}_N=0$ then the double evaluation of the basis functions is not necessary. One may work directly with the matrices 
\begin{align*}
	[\co{\bm{{C}}}_{NM}]_{ij} & :=\frac{1}{M}\sum_{m=1}^M\qty(\Aa\psi_i(\bm{x}_m)+\bm{\xi}_N^{m,i})\qty(\psi_j(\bm{x}_m)+{\bm{\eta}}_N^{m,j})^\dagger, \\
	[\co{\bm{{G}}}_{NM}]_{ij} & :=\frac{1}{M}\sum_{m=1}^M\qty(\psi_i(\bm{x}_m))\qty(\psi_j(\bm{x}_m))^\dagger,
\end{align*}
to obtain that under the same conditions as in Theorem \ref{OC theorem noise} also
	\begin{equation*}
		\mathbb{P}\left[    \norm{\co{{\Aa}}_{NM}- \Aa_N}\leq 2\sqrt{\kappa(\bm{G}_N)} \qty(1+\|\bm{C}_N\|\|\bm{G}_N^{-1}\|)\norm{\bm{G}_N^{-1}}\delta\right]\geq p.
	\end{equation*}
\end{observation}

\section{{Weak spectral} convergence}
\label{eigen section}

In Sections \ref{convergence data section}, \ref{convergence Galerkin section}, and \ref{Joint convergence section}, we established the convergence of the data-driven operators $\wh{\Aa}_{NM}$ when $M$ goes to infinity, the convergence of the Galerkin approximations $\Aa_{N}$ when $N$ goes to infinity, and the joint convergence of $\hat{\Aa}_{NM}$ when $N,M$ go to infinity, respectively. This section establishes convergence results for the eigenvalues and eigenfunctions of the approximations $\wh{\bm{A}}_{NM}$ and $\wh{\Aa}_{NM}$. 

Eigenvalues and eigenfunctions of transfer operators play a vital role in the global analysis of complex dynamical systems. The dominant eigenvalues are related to the slowest timescales of the underlying system and the corresponding eigenfunctions contain important information about slowly evolving spatiotemporal patterns and have been, for instance, used to detect stable conformations of molecules or gyres in the ocean, see \cite{SKH23, froyland2014well} for more details.

\red{For a general non-normal operator $\mathcal{A}$, the spectrum can be highly sensitive to the perturbations introduced by the numerical approximation. This is characterized by the operator's pseudospectrum, which can explain the appearance of spurious eigenvalues (spectral pollution) \cite{colbrook2024rigorous}. Additional challenges arise when the underlying dynamical system has a continuous spectrum, which can cause approximate eigenfunctions to converge weakly to zero \cite[Example 4.2]{mezic2022numerical}. For practical approaches designed to mitigate spectral pollution by discarding spurious eigenpairs, we refer the reader to residual-based methods, see \cite{colbrook2023residual} and the approximation of the Koopman generator of continuous time measure preserving flows by operators with discrete spectrum \cite{giannakis2024consistent}.}

\red{Conversely, the spectrum of a normal operator is stable and insensitive to small perturbations. However, this stability is only realized computationally if the discretization scheme preserves this normality. Standard Galerkin methods do not generally satisfy this property. As a result, spectral pollution can still appear even for well-behaved normal operators, as demonstrated in \cite{colbrook2024rigorous}. The most reliable cases for spectral approximation are therefore situations where the operator is normal  and the chosen numerical method is designed to preserve this structure.  }

We again discuss large data and large dictionary limits, first separately and then jointly, establishing three convergence results below. We first remind the reader of the definition of weak convergence in a Hilbert space. Let $(f_n)_{n=1}^\infty$ be a sequence taking values in a Hilbert space $(\mathcal{H}, \langle\cdot, \cdot\rangle_\Hh)$ and $f \in \Hh$. Then $(f_n)_{n=1}^\infty$ converges weakly to $f$, denoted by $ f_n \stackrel{w}{\to} f $, if
\begin{equation*}
	\left\langle f_n, g\right\rangle_\Hh \rightarrow\langle f, g\rangle_\Hh \text { for every } g\in \mathcal{H}.
\end{equation*}
{In what follows it will be important that the weak limit $f$ of the eigenfunctions is non-zero, see Example~2 in \cite{mezic2022numerical}.}

We now establish a convergence result for eigenpairs in the large data limit.
\begin{theorem}[Convergence of eigensystem in data limit] \label{convergence eigenfunctions data theorem}
	Suppose $\Psi$ satisfies Assumption \ref{continuous} and let $({\lambda}_{M}, f_{M})$ be a sequence of eigenvalue and normalised eigenfunction pairs of $\wh{\Aa}_{NM}$, i.e.,
	\begin{equation*}
		\norm{f_{M}}_{\Dd}=1, \quad\wh{\Aa}_{NM} f_{M}=\lambda_Mf_{M}.
	\end{equation*}
	Then there almost surely exists a subsequence of eigenvalue and normalised eigenfunction pairs $\left(\lambda_{M_i}, f_{ M_i}\right)_{i\in\N}$ such that
	\begin{equation*}
		\lambda_{M_i}\to\lambda, \quad f_{M_i} \stackrel{w}{\to} f\in \Ff_N\subset\Dd, \quad\text{when $M_i\to\infty$},
	\end{equation*}
	where the weak convergence is with respect to the inner product on $\Dd$. Furthermore, if $f\ne 0$ and Assumptions \ref{assumption} and \ref{moments assumption} hold, then $(\lambda,f)$ is an eigenvalue and eigenfunction pair of $\Aa_N := \restr{\mathcal{P}_{\Ff_N}\Aa}{\Ff_N}$.
\end{theorem}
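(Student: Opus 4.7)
The plan is to exploit the finite-dimensionality of $\Ff_N$ so that, once the empirical Gram matrix is invertible, the eigenpair problem for $\wh\Aa_{NM}$ reduces to standard matrix perturbation on an $N$-dimensional space.

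First I would invoke Lemma~\ref{invertiblelemma} to pick, almost surely, an $m_0\in\N$ such that for all $M\ge m_0$ the identification $\wh\Ff_{NM}\cong\Ff_N$ from Observation~\ref{ObservationIdentification} is valid. From here on I restrict to $M\ge m_0$; then $\wh\Aa_{NM}\colon\Ff_N\to\Ff_N$ and each $f_M$ lives in the fixed finite-dimensional Hilbert space $(\Ff_N,\br{\cdot,\cdot}_\Dd)$ with $\norm{f_M}_\Dd=1$.

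Next I extract the subsequence. Corollary~\ref{convergence data corollary} gives the almost sure operator-norm convergence $\wh\Aa_{NM}\to\Aa_N$, so $\sup_M\norm{\wh\Aa_{NM}}<\infty$ and hence $\abs{\lambda_M}\le\norm{\wh\Aa_{NM}}$ is bounded uniformly in $M$. Bolzano--Weierstrass in $\C$ yields a subsequence with $\lambda_{M_i}\to\lambda\in\C$; compactness of the closed unit $\Dd$-ball in the finite-dimensional subspace $\Ff_N$ then allows a further subsequence along which $f_{M_i}$ converges strongly in $\Dd$ to some $f$. Closedness of $\Ff_N$ in $\Dd$ forces $f\in\Ff_N$, strong convergence trivially implies the weak convergence stated in the theorem, and in finite dimensions these two notions coincide; continuity of the norm therefore gives $\norm{f}_\Dd=1$, so $f\ne 0$ holds automatically.

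To conclude I pass to the limit in $\wh\Aa_{NM_i}f_{M_i}=\lambda_{M_i}f_{M_i}$. The triangle inequality gives
\begin{equation*}
\norm{\wh\Aa_{NM_i}f_{M_i}-\Aa_N f}_\Ff \le \norm{\wh\Aa_{NM_i}-\Aa_N}\norm{f_{M_i}}_\Ff+\norm{\Aa_N}\norm{f_{M_i}-f}_\Ff,
\end{equation*}
and both terms vanish as $i\to\infty$ by Corollary~\ref{convergence data corollary} together with the strong convergence $f_{M_i}\to f$ (all norms on $\Ff_N$ being equivalent). On the right-hand side, $\lambda_{M_i}f_{M_i}\to\lambda f$ by scalar continuity, so $\Aa_N f=\lambda f$, which is the desired eigenrelation. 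The main obstacle is essentially bookkeeping: being precise about the non-canonical identification of $\wh\Ff_{NM}$ with $\Ff_N$, so that $\norm{f_M}_\Dd=1$ is meaningful and the operator-norm convergence of Corollary~\ref{convergence data corollary} is interpreted on the correct space. All the real analytic work has already been performed in that corollary; what remains is finite-dimensional linear algebra.
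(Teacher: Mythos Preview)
Your argument is correct and follows essentially the same route as the paper: extract a convergent subsequence by compactness of the unit ball in the finite-dimensional space $\Ff_N$, obtain a convergent subsequence of eigenvalues, and pass to the limit in the eigenrelation using the operator-norm convergence of Corollary~\ref{convergence data corollary}. Your limit argument via the triangle inequality is just a rearrangement of the paper's decomposition $\Aa_N f=\Aa_N(f-f_{M_i})+(\Aa_N-\wh\Aa_{NM_i})f_{M_i}+\wh\Aa_{NM_i}f_{M_i}$.

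Two small remarks. First, you invoke Lemma~\ref{invertiblelemma} and Corollary~\ref{convergence data corollary} at the outset, both of which require the full Assumptions~\ref{assumption} and~\ref{moments assumption}; the theorem, however, states the existence of a convergent subsequence under Assumption~\ref{continuous} alone and only adds the stronger hypotheses for the identification of the limit as an eigenpair of $\Aa_N$. The paper separates these two steps (though its justification for the boundedness of $(\lambda_M)$ under merely Assumption~\ref{continuous} is itself rather terse). Second, your observation that strong convergence in finite dimensions forces $\norm{f}_\Dd=1$, so that the hypothesis $f\ne 0$ is automatically satisfied, is a genuine sharpening that the paper does not make explicit.
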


\begin{proof}
	It holds that $\left(f_{M}\right)$ is a bounded subsequence of $\Ff_N\subset \Dd$, so by the Banach--Alaoglu Theorem it has a weakly convergent subsequence, see \cite[Section 16.2, Theorem 6]{royden1968real}. In our finite-dimensional case, this weak convergence is equivalent to convergence in the norm $\norm{.}_\Dd$. Additionally, for each $M$, $\wh{\Aa}_{NM}$ is bounded and thus the sequence of eigenvalues $(\lambda_{M})$ is also bounded and has a convergent subsequence. Consequently, there exists a subsequence of eigenvalue and normalised eigenfunction pairs $\left(\lambda_{M_i}, f_{M_i}\right)$ such that
	\begin{equation*}
		\lambda_{M_i}\to\lambda, \quad f_{M_i} \stackrel{}{\to} f, \quad\text{when } M_i\to\infty.
	\end{equation*}
	Supposing that Assumptions \ref{assumption} and \ref{moments assumption} hold, we want to show that $\Aa_N f=\lambda f$. We have
	\begin{equation} \label{c0}
		\Aa_N f=\Aa_N( f-f_{M_i})+\Aa_N f_{M_i}.
	\end{equation}
	The first summand converges to zero because $f_{M_i} \stackrel{}{\to}  f$ and $\Aa_N$ is bounded on $\Dd$.
	Expanding the second summand, we have
	\begin{equation*}
		\lim_{i\to\infty}\Aa_N f_{M_i} =\lim_{i\to\infty}(\Aa_N-\wh{\Aa}_{NM_i}) f_{M_i}+\lim_{i\to\infty}\wh{\Aa}_{NM_i} f_{M_i} =\lim_{i\to\infty}\lambda_i f_{M_i}=\lambda f,
	\end{equation*}
	where in the second equality we used Corollary \ref{convergence data corollary} and in the third we used the convergence of $(\lambda_{M_i}, f_{M_i})$. Taking limits in \eqref{c0} concludes the proof.
\end{proof}
We now move on to the convergence result in the large dictionary limit. {\red{The following theorems include the additional assumption that the limit of the eigenfunctions is nonzero. This does not always hold, as mentioned at the beginning of the section.}}

\begin{theorem}[Weak convergence of eigensystem  in dictionary limit] \label{convergence eigenfunctions dictionary theorem}
	Suppose there exists a sequence $(\lambda_N, f_N)$ of eigenvalue and normalized eigenfunction pairs of $\Aa_N$, i.e.,
	\begin{equation*}
		\norm{f_N}_{\Dd}=1, \quad\Aa_N f_N=\lambda_Nf_N.
	\end{equation*}
	Then there exists a subsequence of eigenvalue and normalised eigenfunction pairs $\left(\lambda_{N_i}, f_{N_i}\right)_{i\in \N}$ such that
	\begin{equation*}
		\lambda_{N_i}\to\lambda, \quad f_{N_i} \stackrel{w}{\to} f, \quad\text{when } N_i\to\infty,
	\end{equation*}
	where $f\in \Dd$ and the weak convergence is with respect to the inner product on $\Dd$. Furthermore, if $f\ne 0$ and Assumption \ref{projection assumption} holds, then $(\lambda,f)$ is an eigenvalue and eigenfunction pair of $\Aa$.
\end{theorem}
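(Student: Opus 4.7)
My plan is to mirror the strategy of Theorem \ref{convergence eigenfunctions data theorem}, but with the dictionary index $N$ now playing the role of the sample index: extract weakly convergent subsequences of $(f_N)$ in $\Dd$ and of $(\lambda_N)$ in $\C$, pass to the limit in the eigenvalue equation tested against functions $\varphi \in \Ff_\infty$, and close the argument using the density of $\Ff_\infty$ granted by Assumption \ref{projection assumption}.

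First, I would extract the subsequences. Since $\|f_N\|_\Dd=1$, the sequence is bounded in the Hilbert space $\Dd$, so the Banach--Alaoglu theorem provides a weakly convergent subsequence $f_{N_i} \stackrel{w}{\to} f\in\Dd$. Because the inclusion $\Dd\hookrightarrow\Ff$ and the operator $\Aa\colon\Dd\to\Ff$ are bounded linear maps (each with norm at most $1$, as noted in Section \ref{continuous subsection}), this automatically yields the further weak convergences $f_{N_i}\stackrel{w}{\to} f$ and $\Aa f_{N_i}\stackrel{w}{\to}\Aa f$, both in $\Ff$. For the eigenvalues, the elementary estimate
\begin{equation*}
|\lambda_N|\,\|f_N\|_\Ff = \|\Aa_N f_N\|_\Ff = \|\Pp_{\Ff_N}\Aa f_N\|_\Ff \le \|\Aa f_N\|_\Ff \le \|f_N\|_\Dd = 1
\end{equation*}
supplies boundedness of $(\lambda_{N_i})$ as soon as $\|f_{N_i}\|_\Ff$ stays bounded away from zero, which is automatic once the weak limit $f$ is non-zero. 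I would then pass to a sub-subsequence with $\lambda_{N_i}\to\lambda$.

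Next, I would identify $(\lambda,f)$ as an eigenpair of $\Aa$. Fixing $\varphi\in\Ff_\infty$ and choosing $N_0$ with $\varphi\in\Ff_{N_0}$, for every $N_i\ge N_0$ the orthogonal projection $\Pp_{\Ff_{N_i}}$ fixes $\varphi$, so using $\Aa_N=\restr{\Pp_{\Ff_N}\Aa}{\Ff_N}$ and self-adjointness of the projection,
\begin{equation*}
\lambda_{N_i}\br{f_{N_i},\varphi}_\Ff = \br{\Aa_{N_i} f_{N_i},\varphi}_\Ff = \br{\Pp_{\Ff_{N_i}}\Aa f_{N_i},\varphi}_\Ff = \br{\Aa f_{N_i},\varphi}_\Ff.
\end{equation*}
Sending $N_i\to\infty$ and combining the two weak convergences in $\Ff$ with $\lambda_{N_i}\to\lambda$ gives $\br{\Aa f,\varphi}_\Ff=\lambda\br{f,\varphi}_\Ff$ for every $\varphi\in\Ff_\infty$. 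Assumption \ref{projection assumption} then extends this identity to all $\varphi\in\Ff$ by density, forcing $\Aa f=\lambda f$ in $\Ff$; when $f\ne 0$, this is exactly the required eigenpair.

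The main obstacle is the boundedness of $(\lambda_N)$. In Theorem \ref{convergence eigenfunctions data theorem} this came for free because $N$ was fixed and $\wh{\Aa}_{NM}$ is bounded uniformly in $M$; here the varying index is $N$, and in the unbounded generator case (e.g.\ \eqref{Koopman generator}) the operator norm of $\Aa_N$ on $(\Ff_N,\|\cdot\|_\Dd)$ need not be uniformly controlled in $N$. Only the relative bound above is available in general, which is precisely why the final conclusion must carry the side-condition $f\ne 0$: if $\|f_{N_i}\|_\Ff\to 0$ along some subsequence, the eigenvalues may blow up and no non-trivial limit survives.
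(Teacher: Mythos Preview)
Your argument is correct and reaches the same conclusion as the paper, but the route differs in two places.

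For the limit identification, the paper tests the eigenvalue equation against an arbitrary $g\in\Dd$ and splits
\[
\br{\Aa f,g}_\Ff=\br{\Aa(f-f_{N_i}),g}_\Ff+\br{(\rm{Id}-\Pp_{\Ff_{N_i}})\Aa f_{N_i},g}_\Ff+\br{\Aa_{N_i}f_{N_i},g}_\Ff,
\]
handling the first term via the Hilbert-space adjoint $\Aa^*\colon\Ff\to\Dd$ and the second by moving the projection onto $g$ and invoking Assumption~\ref{projection assumption} for this fixed $g$. Your choice to test only against $\varphi\in\Ff_\infty$ makes the projection term vanish identically for large $i$ (since $\Pp_{\Ff_{N_i}}\varphi=\varphi$), so no adjoint is needed and you close by density of $\Ff_\infty$ in $\Ff$. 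This is a cleaner and more elementary variant; the paper's version is slightly more direct in that it works with all of $\Dd$ at once but pays for it by introducing $\Aa^*$.

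On the eigenvalue bound, the paper simply asserts that $(\lambda_N)$ is bounded because each $\Aa_N$ is bounded, which does not by itself give a uniform bound in $N$. Your relative estimate $|\lambda_N|\,\|f_N\|_\Ff\le 1$ together with weak lower semicontinuity of $\|\cdot\|_\Ff$ (so that $\liminf\|f_{N_i}\|_\Ff\ge\|f\|_\Ff>0$ when $f\ne 0$) is the right way to justify boundedness along the subsequence, and your remark that this step genuinely needs the side-condition $f\ne 0$ is well taken.
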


\begin{proof}
	It holds that $\left(f_{N}\right)$ is a bounded subsequence of $\Dd$. Additionally, for each $N$, $\Aa_N$ is bounded. Thus, the sequence of eigenvalues $(\lambda_{N})$ is also bounded and has a convergent subsequence. Consequently, there exists a subsequence of eigenvalue and normalised eigenfunction pairs $\left(\lambda_{N_i}, f_{N_i}\right)$ such that
	\begin{equation*}
		\lambda_{N_i}\to\lambda, \quad f_{N_i} \stackrel{w}{\to} f, \quad\text{when } N_i\to\infty.
	\end{equation*}
	Supposing that Assumption \ref{projection assumption} holds, we want to show that $\Aa f=\lambda f$ or, equivalently, that $\langle\Aa f,g\rangle_{\Ff}=\langle\lambda f,g\rangle_{\Ff}$ for all $g\in\Dd$. We have
	\begin{equation} \label{c0d}
		\br{\Aa f,g}_{\Ff}=\br{\Aa( f-f_{N_i}),g}_{\Ff}+\br{\Aa f_{N_i},g}_{\Ff}.
	\end{equation}
	Now, on the one hand, since $f_{N_i} \stackrel{w}{\to} f$ in $\Dd $ and $\Aa\colon\Dd\to\Ff$ is bounded, $\Aa$ maps weakly convergent sequences in $\Dd$ to weakly convergent sequences in $\Ff$, so
	\begin{equation} \label{c1d}
		\lim_{i\to\infty}\br{\Aa( f-f_{N_i}),g}_{\Ff}=0.
	\end{equation}
	On the other hand,
	\begin{align} \label{c2d}
		\begin{split}
			\lim_{i\to\infty}\br{\Aa f_{N_i},g}_{\Ff} & =\lim_{i\to\infty}\br{\qty(\rm{Id}-\Pp_{\Ff_{N_i}})\Aa f_{N_i},g}_{\Ff}+\lim_{i\to\infty}\br{\Aa_{N_i} f_{N_i},g}_{\Ff} \\
			                                          & =\lim_{i\to\infty}\br{\lambda_i f_{N_i},g}_{\Ff}=\br{\lambda f,g}_{\Ff},
		\end{split}
	\end{align}
	where in the first equality we used that by definition $\Aa_{N_i}=\restr{\Pp_{\Ff_{N_i}} \Aa}{\Ff_{N_i}}$, in the second we used that $\br{(\rm{Id}-\Pp_{\Ff_{N_i}})\Aa f_{N_i},g}_\Ff=\br{\Aa f_{N_i},(\rm{Id}-\Pp_{\Ff_{N_i}})g}_\Ff\to 0$ by the self-adjointness of $\Pp_{\Ff_{N_i}}$, the boundedness of $(\Aa f_{N_i})$ in $\Ff$, and Assumption \ref{projection assumption}, and in the third we used the convergence of $(\lambda_{N_i},f_{N_i})$. Taking limits in \eqref{c0d} and using \eqref{c1d} and \eqref{c2d} concludes the proof.
\end{proof}

\red{Conditions under which such a sequence exists can be found in, for example, \cite[Theorem 3.2]{beattie2000galerkin}, which rely on $\Aa _n \approx \Aa $ and $\Dd_n \approx \Dd$.}

From Theorem \ref{joint limit theorem}, we finally obtain the following result regarding the convergence of eigenvalues and eigenfunctions in the joint large data and dictionary limit.

\begin{theorem}[Joint data and dictionary limit of eigensystem]\label{convergence eigenfunctions joint theorem} Let $\Psi $ satisfy Assumption \ref{continuous} and let $({\lambda}_{N M}, f_{N M})_{(N,M)\in\N^2} $ be a sequence of eigenvalue and normalized eigenfunction pairs of $\wh{\Aa}_{NM}$.
	Then there almost surely exists a subsequence of eigenvalue and normalised eigenfunction pairs $\left(\lambda_{N_iM_{N_i}}, f_{N_iM_{N_i}}\right)$ such that, for any sequence $M'_{N_i} \geq M_{N_i}$, almost surely
	\begin{equation*}
		\lambda_{N_i M'_{N_i}}\to\lambda, \quad f_{N_iM'_{N_i}} \stackrel{w}{\to} f,\quad\text{when $N_i\to\infty$},
	\end{equation*}
	where $f\in \Dd$ and the weak convergence is with respect to the inner product on $\Dd$. Furthermore, if $f\ne 0$ and Assumptions \ref{assumption}, \ref{moments assumption}, \ref{projection assumption}, and \ref{projection operator assumption} hold, then $(\lambda,f)$ is an eigenvalue and eigenfunction pair of~$\Aa$.
\end{theorem}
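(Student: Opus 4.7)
The plan is to combine the two one-parameter limits of Theorems \ref{convergence eigenfunctions data theorem} and \ref{convergence eigenfunctions dictionary theorem} via a diagonal extraction, in the spirit of the proof of Theorem \ref{joint limit theorem} but carried out at the level of eigenpairs rather than operator actions.

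First, for each fixed $N$, I apply Theorem \ref{convergence eigenfunctions data theorem} to the sequence $(\lambda_{NM}, f_{NM})_M$ of eigenpairs of $\wh{\Aa}_{NM}$. Almost surely this produces a subsequence $M^{(N)}_k \to \infty$ along which $\lambda_{N,M^{(N)}_k} \to \tl{\lambda}_N$ and $f_{N,M^{(N)}_k} \stackrel{w}{\to} \tl{f}_N$ in $\Dd$. Since $\Ff_N$ is finite-dimensional, weak convergence in $\Dd$ restricted to $\Ff_N$ coincides with norm convergence and preserves the normalisation $\|\tl{f}_N\|_\Dd = 1$, so in particular $\tl{f}_N \neq 0$. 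By the second part of that theorem, $(\tl{\lambda}_N, \tl{f}_N)$ is therefore an eigenpair of $\Aa_N$.

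Second, I apply Theorem \ref{convergence eigenfunctions dictionary theorem} to the sequence $(\tl{\lambda}_N, \tl{f}_N)_N$ of eigenpairs of $\Aa_N$, extracting a further subsequence $N_i \to \infty$ with $\tl{\lambda}_{N_i} \to \lambda$ and $\tl{f}_{N_i} \stackrel{w}{\to} f$ in $\Dd$; under Assumption \ref{projection assumption} and the hypothesis $f \neq 0$, the pair $(\lambda, f)$ is then an eigenpair of $\Aa$, which is the desired limit. I then diagonalise: for each $i$, pick $M_{N_i}$ from the subsequence $M^{(N_i)}_k$ large enough that $|\lambda_{N_i, M^{(N_i)}_k} - \tl{\lambda}_{N_i}| + \|f_{N_i, M^{(N_i)}_k} - \tl{f}_{N_i}\|_\Dd < 1/i$ for all $k$ with $M^{(N_i)}_k \geq M_{N_i}$. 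A triangle inequality combined with the convergence produced in the previous step then gives $\lambda_{N_i, M_{N_i}} \to \lambda$ and $f_{N_i, M_{N_i}} \stackrel{w}{\to} f$, which is the eigenpair version of the existence statement in Theorem \ref{joint limit theorem}.

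The main obstacle is the quantifier ``for any sequence $M'_{N_i} \geq M_{N_i}$'' in the statement: the direct diagonal argument above only covers indices drawn from the particular subsequence $M^{(N)}_k$ produced in the first step, whereas the theorem asks for convergence along an arbitrary $M'_{N_i} \geq M_{N_i}$. To close this gap I would strengthen the first step by combining Lemma \ref{invertiblelemma}, which for $M$ sufficiently large lets us view $\wh{\Aa}_{NM}$ as an operator on the fixed finite-dimensional space $\Ff_N$, with the operator-norm convergence $\|\wh{\Aa}_{NM} - \Aa_N\| \to 0$ from Corollary \ref{convergence data corollary} and classical perturbation theory for eigenpairs of matrices. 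Together these imply that every eigenpair of $\wh{\Aa}_{NM}$ lies within $o(1)$ of some eigenpair of $\Aa_N$ uniformly in $M$, not merely along a subsequence, which when fed into the diagonal argument handles arbitrary $M'_{N_i} \geq M_{N_i}$, exactly mirroring the structure of Theorem \ref{joint limit theorem}.
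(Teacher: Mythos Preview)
Your approach is genuinely different from the paper's. The paper does not compose Theorems \ref{convergence eigenfunctions data theorem} and \ref{convergence eigenfunctions dictionary theorem}; instead it first extracts a weakly convergent subsequence of the eigenpairs by Banach--Alaoglu, then invokes Theorem \ref{joint limit theorem} directly to obtain pointwise convergence of $\wh{\Aa}_{N_iM'_{N_i}}$ to $\Aa$ for any $M'_{N_i}\ge M_{N_i}$, and finally verifies $\langle \Aa f,g\rangle_\Ff=\langle \lambda f,g\rangle_\Ff$ by splitting $\langle \Aa f,g\rangle_\Ff$ into three terms, exactly as in the proof of Theorem \ref{convergence eigenfunctions dictionary theorem} but with $\wh{\Aa}_{N_iM'_{N_i}}$ in place of $\Aa_{N_i}$. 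The intermediate eigenpairs $(\tl\lambda_N,\tl f_N)$ of $\Aa_N$ never appear.

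Your two-step diagonal argument is sound for producing \emph{one} subsequence $(N_i,M_{N_i})$ along which the eigenpairs converge and for showing that the limit is an eigenpair of $\Aa$ when $f\neq 0$. The gap is precisely where you locate it, but your proposed fix does not close it. Perturbation theory (or the compactness argument underlying it) only guarantees that for large $M$ each eigenpair of $\wh{\Aa}_{NM}$ lies near \emph{some} eigenpair of $\Aa_N$; it does not pin down \emph{which} one. If $\Aa_N$ has several eigenpairs, the given sequence $(\lambda_{N,M'},f_{N,M'})$ may, as $M'$ varies, approach different eigenpairs of $\Aa_N$, so there is no reason it should remain near the particular $(\tl\lambda_N,\tl f_N)$ selected in your first step. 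The triangle-inequality bridge to $(\tl\lambda_{N_i},\tl f_{N_i})$ therefore breaks for arbitrary $M'_{N_i}\ge M_{N_i}$. The paper's route sidesteps this entirely: since the ``for any $M'$'' clause in Theorem \ref{joint limit theorem} concerns convergence of the \emph{operators} $\wh{\Aa}_{N_iM'_{N_i}}$ rather than of eigenpairs, no matching of intermediate eigenpairs is ever required.
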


\begin{proof}
	The first part follows from $\left(\lambda_{NM}\right)$ and $\big(f_{NM}\big)$ being bounded sequences of $\C$ and $\Ff_N\subset \Dd$, respectively.
	Let $\big(\lambda_{N_i,M_i},f_{N_i,M_i}\big)$ be such a convergent sequence and suppose that Assumptions \ref{assumption}, \ref{moments assumption}, \ref{projection assumption}, and \ref{projection operator assumption} hold. Then, by Theorem \ref{joint limit theorem}, there exists a subsubsequence $({N_i,M_{N_i}})$ such that, for any $M'_{N_i} \geq M_{N_i}$, almost surely, for any $g\in \Dd$,
	\begin{equation*}
		\left(\wh{\Aa}_{N_iM'_{N_i}}-\Aa\right) \colon (\Ff_{N_i},\norm{.}_\Dd)\to(\Ff,\norm{.}_\Ff)
	\end{equation*}
	converges to zero pointwise when $N_i\to\infty$. It holds that
	\begin{equation*}
		\br{\Aa f,g}_\Ff =\br{\Aa\left(f-f_{N_i,M'_{N_i}}\right),g}_\Ff+\br{(\Aa-\Aa_{N_i,M'_{N_i}})f_{N_i,M'_{N_i}},g}_\Ff+\br{\Aa_ {N_i,M'_{N_i}}f_{N_i,M'_{N_i}},g}_\Ff.
	\end{equation*}
	For $N_i\to\infty$, the first summand converges to zero because $\Aa$ is bounded and $f_{N_i,M'_{N_i}}\stackrel{w}{\to} f$, the second summand does the same because of how $({N_i,M'_{N_i}})$ was chosen, and the third summand converges to $\br{\lambda f, g}$ because $\lambda_{N_iM'_{N_i}}\to\lambda$. Thus,
	\begin{equation*}
		\br{\Aa f,g}_\Ff =\br{\lambda f, g}_\Ff
	\end{equation*}
	for all $g\in \Dd $, which concludes the proof.
\end{proof}

\section{Numerical experiments} \label{simulations section}

We will now illustrate the derived convergence results and error bounds and test their sharpness.

\subsection{Benchmark problems}\label{benchmark problems section}

We analyse transfer operators associated with deterministic and stochastic systems of the form \eqref{SDE}. In particular, we consider:
\begin{enumerate} \setlength{\itemsep}{0ex}
	\item The ODE defined by
	      \begin{equation} \label{ODE1}
		      \bm{b}({\bm{x}}) = \begin{bmatrix}
			      \gamma x_1
			      \\
			      \delta (x_2 -x_1^2)
		      \end{bmatrix},
	      \end{equation}
	      with $\gamma =-0.8, \delta = -0.7$.
	\item The overdamped Langevin dynamics corresponding to the two-dimensional double-well potential $V(x)=(x_1^2-1)^2 + x_2^2$ with anisotropic diffusion, whose drift and diffusion terms are given by
	      \begin{equation}\label{double well}
		      \bm{b}(\bm{x})=\left[\begin{array}{c}
				      4 x_1-4 x_1^3 \\
				      -2 x_2
			      \end{array}\right], \quad  \bm{\sigma(x)}=\left[\begin{array}{cc}
				      0.7 & x_1 \\
				      0   & 0.5
			      \end{array}\right],
	      \end{equation}
	      respectively.
	\item The one-dimensional Ornstein--Uhlenbeck process given by
	      \begin{equation} \label{OU}
		      b(x)=- \alpha x , \quad \sigma(x)= \sqrt{\frac{1}{2 \beta}},
	      \end{equation}
	      This SDE has a unique invariant distribution $\mathcal{N}\qty(0,\frac{1}{2\beta})$. We choose $\alpha=1$ and $\beta=2$.
\end{enumerate}
For each of the dynamical systems, we aim to reconstruct the Koopman generator, the Perron Frobenius generator or the Koopman operator. We will specify the precise setup below. In each case, we compute the normalized error
\begin{equation} \label{normalized error}
	\varepsilon_N := \frac{\big\|\bm{A}_N-\wh{\bm{A}}_{NM}\big\|}{\big\|\bm{A}_N\big\|}
\end{equation}
of the data-driven matrix $\wh{\bm{A}}_{NM}$ with respect to the true Galerkin projection $\bm{A}_N$ in \eqref{Koopman generator}, where we consider as a proxy for $\bm{A}_N$ the result of gEDMD with a large number of data points. In the two-dimensional case, we use the domain $\X =[-2,2] \times [-1,1]$ and in the one-dimensional case, we define $\X =[-2,2]$. In all cases, we consider the Lebesgue measure on $\X$. For the three dynamical systems introduced above, we now study three different scenarios: the large data limit, the large dictionary limit, and the large data limit when using noisy data.

\subsection{Numerical results as the number of data points tends to infinity} \label{simulations data limit section}

In our first experiment, we choose as basis functions monomials of order up to $k$, defined by
\begin{equation*}
	\Psi^{\rm{MON}} = \set{\bm{x}^\alpha }_{\abs{\alpha} \leq k},
\end{equation*}
and Gaussians centred at a collection of equidistant grid points $\{\bm{p}_{n}\}_{n=1}^N$, i.e.,
\begin{equation*}
	\Psi^{\rm{GSN}} = \set{\exp\qty(-\frac{\norm{\bm{x}-\bm{p}_n}^2}{2 \theta^2})}_{n=1}^N,
\end{equation*}
where for two-dimensional problems we use
\begin{equation*}
	\big\{\bm{p}_{n}\big\}_{n=1}^N=\set{\qty(\frac{i}{2}-2,\frac{j}{2}-1), i=0,\dots,8, j=0,\dots,4},
\end{equation*}
and for one-dimension problems
\begin{equation*}
	\{\bm{p}_{n}\}_{n=1}^N=\set{\frac{i}{2}-2, i=0,\dots,8}.
\end{equation*}

\red{These basis functions are independent and smooth, thus satisfying Assumption \ref{assumption}. Moreover, since $\X$ is compact,  they satisfy the boundedness assumption of Assumption~\ref{bn assumption}. All these basis functions are dense in $H^k(\X)$ for any $k \in \N_{\geq 0}$ , see \cite{bernardi1992polynomial} and \cite[Theorem 6]{wu1993local}. All the operators considered have $\Dd $ as a Sobolev space and as a result, Assumptions~\ref{projection assumption} and \ref{projection operator assumption} are satisfied.}

We also use a finite element method (FEM) basis of piecewise linear functions with $0$ boundary condition on a uniform mesh. That is, if we write $\set{\bm{v}_j}_{j=1}^N$ for the non-boundary vertices of the mesh, we have that $\psi^{\rm{FEM}}$ are the only continuous piecewise linear functions that satisfy
\begin{equation*}
	\psi^{\rm{FEM}}_i(\bm{v}_j)= \delta _{ij}, \quad\forall i,j=1,\dots,N,
\end{equation*}
and write $\Psi^{\rm{FEM}} = \set{\psi^{\rm{FEM}}_n}_{n=1}^N$.
The basis functions in $\Psi^{\rm{FEM}}$ are only once weakly differentiable and as a result do not belong to $\Dd$ when the operator is a second-order differential operator. However, the structure matrix $\bm{C}$ can still be calculated for second-order operators using Observation~\ref{smoothness observation}.

We set the maximum degree of the monomials to $k=8$ so that $N = \binom{k+d}{k}$. That is, $N=45$ when the dimension is $d=2$ and $N=9$ when $d=1$. Additionally, we take a uniform mesh with $45$ and $9$  non-boundary nodes in the $2D$ and $1D$ cases, respectively. In this way, there are the same number of observables in $\Psi^{\rm{MON}},\Psi^{\rm{GSN}}$ and $\Psi^{\rm{FEM}}$ . We then set $\theta = \frac{1}{2N}$ so that the basis functions are well separated. That is, $\theta  < \frac{1}{2}\min_{i\ne j}\norm{\bm{p}_i-\bm{p}_j}$. The points $\bm{x}_1,\ldots,\bm{x}_M$ are sampled uniformly and independently from $\X$ with the Lebesgue measure.

\red{Since we know the dynamical system, the operator $\Aa $ and its action on the basis functions is known exactly. This allows for the exact computation of $\wh{\b{C}}_{NM}, \wh{\b{G}}_{NM}$ and $\wh{\b{A}}_{NM}$ in \eqref{approximate matrices}--\eqref{approximate Galerkin}. We apply this} to approximate the Koopman semigroup, and the Koopman and Perron--Frobenius generators associated with \eqref{ODE1}, \eqref{double well}, and \eqref{OU} for an increasing number of data points using $M=2^8,2^9,\dots,2^{19}$ and compute \eqref{normalized error}, where, since in general we do not have access to $\bm{A}_N$, we approximate it by $\wh{\bm{A}}_{NM}$ with $M=2^{20}$. We repeat this process $50$ times for each $M$ to calculate the average normalized operator error $\varepsilon$. In Figure \ref{figDataConv}, we plot in log-log scale the relationship between $M$ and $\varepsilon$, including a 95\% confidence interval for the error. To serve as a reference, we show dashed lines with slope $-\frac{1}{2}$ and $-1$, respectively.

As can be seen, for all choices of basis functions and all systems, the error has a slope of approximately $-\frac{1}{2}$. This is in accordance with Theorem~\ref{OC theorem} as when $N$ is fixed we obtain $\varepsilon = \mathcal{O}(M^{-\nicefrac12})$. In Figures \ref{imgDoubleWell} and \ref{imgDoubleWellEDMD}, we see that the error of the approximation using FEM basis functions is quite large. This is to be expected as for small $M$ it is possible for an element of the mesh to have few points. If this is the case, the empirical Gram matrix is close to singular (see the comments around Example \ref{FEM example}). This also results in initially large confidence interval for the error, which, when represented on a log-log plot, creates a strong visual effect. The error decays at the expected rate. In Figure \ref{imgOU}, the error using monomial basis functions becomes zero. This is because the subspace spanned by the monomial basis functions is invariant under the Koopman generator of the OU system, see Corollary~\ref{exact approximation}.

\begin{figure}
	\centering
	\begin{subfigure}{.32\linewidth}
		\centering
		\caption{}
		\includegraphics[width=\linewidth]{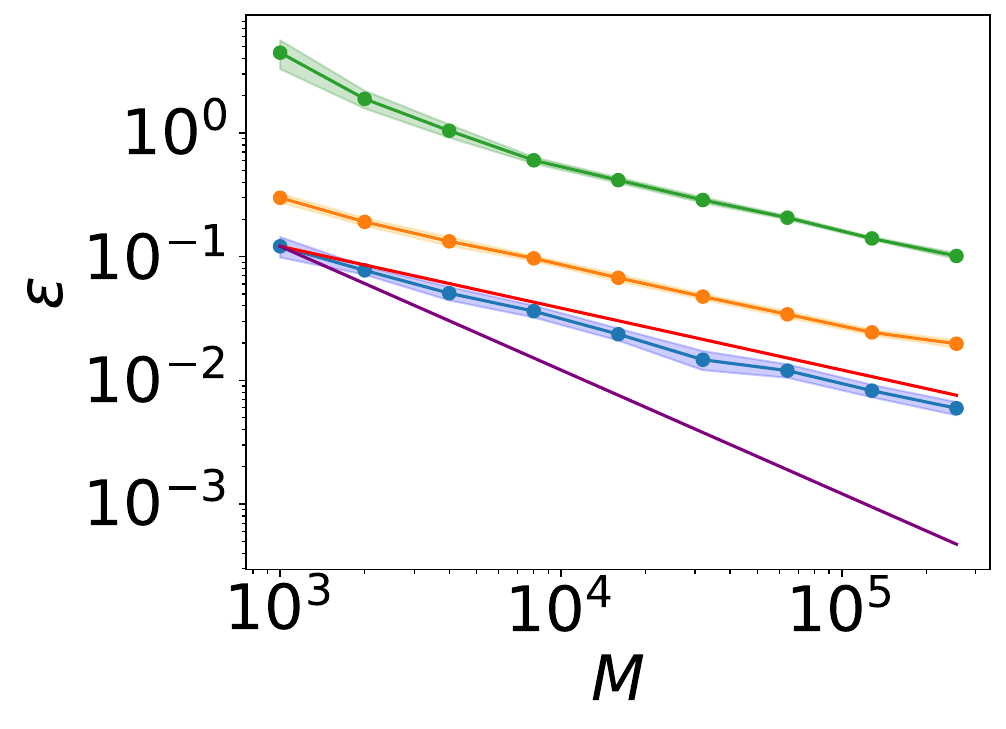}
		\label{imgODE}
	\end{subfigure}
	\begin{subfigure}{.32\linewidth}
		\centering
		\caption{}
		\includegraphics[width=\linewidth]{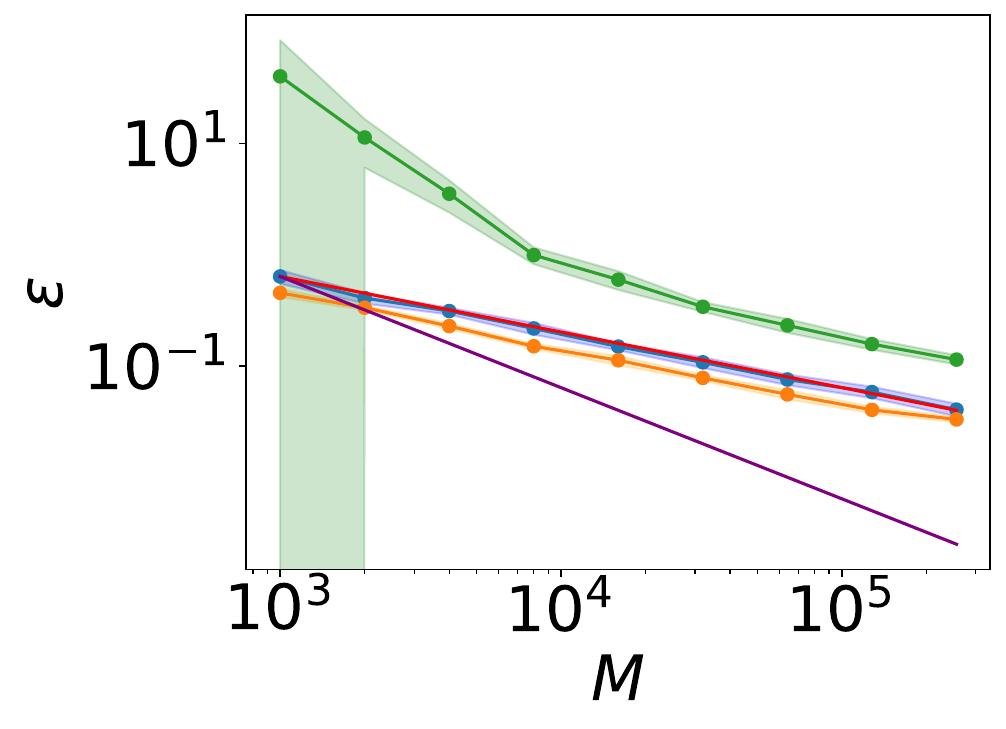}
		\label{imgDoubleWell}
	\end{subfigure}
	\begin{subfigure}{.32\linewidth}
		\centering
		\caption{}
		\includegraphics[width=\linewidth]{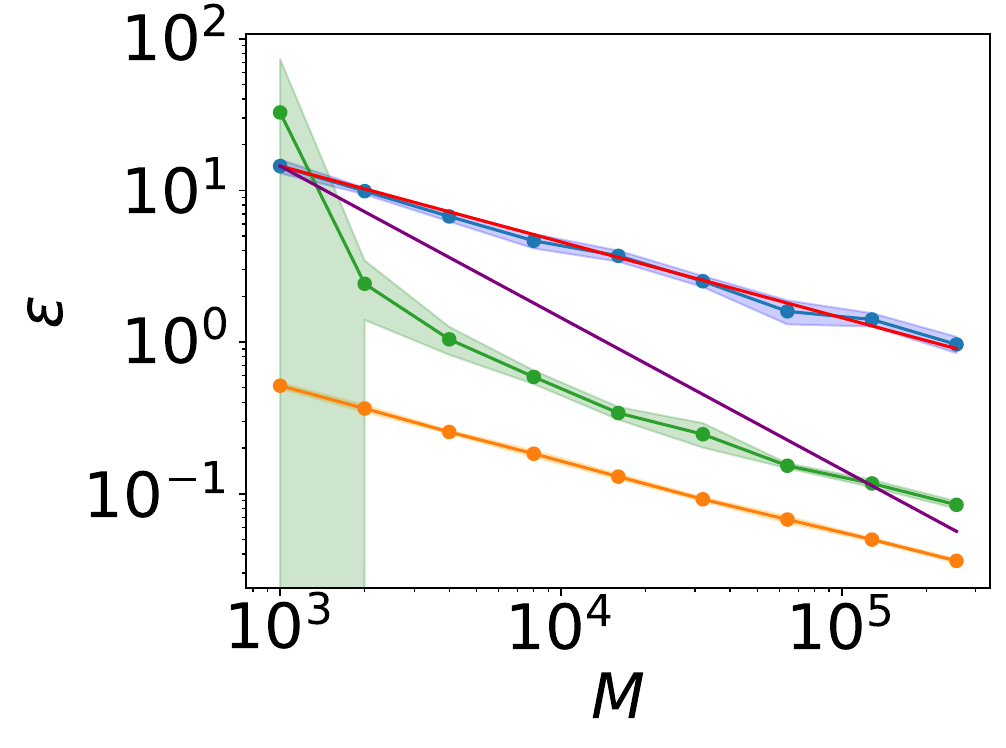}
		\label{imgDoubleWellEDMD}
	\end{subfigure}\\
	\begin{subfigure}{.32\linewidth}
		\centering
		\caption{}
		\includegraphics[width=\linewidth]{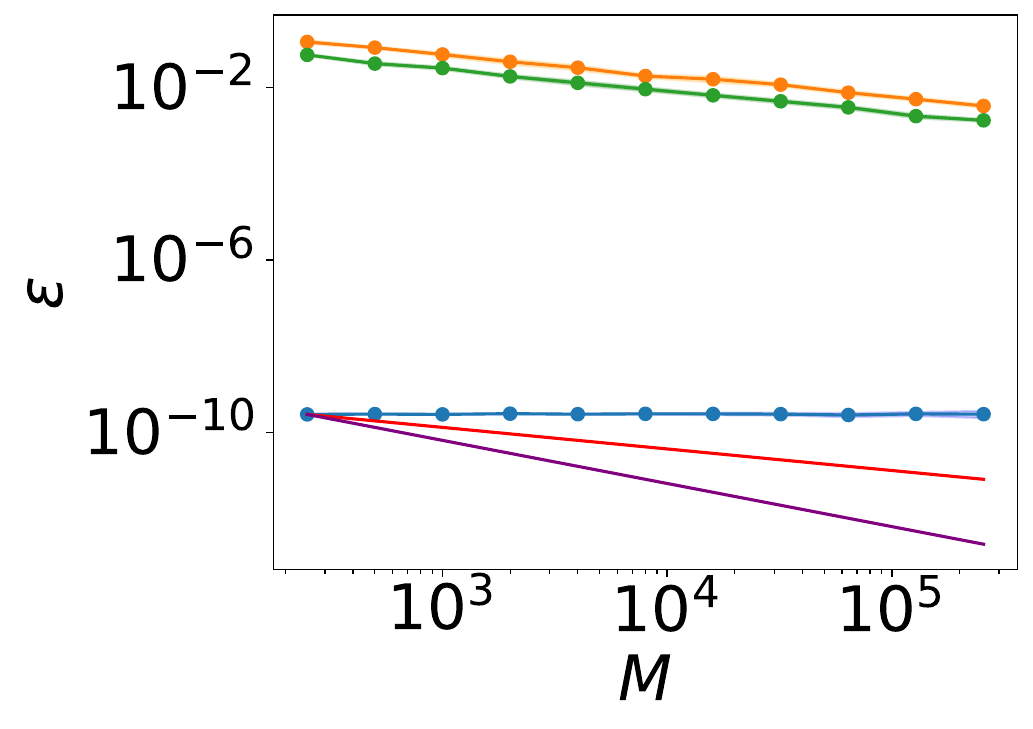}
		\label{imgOU}
	\end{subfigure}
	\begin{subfigure}{.32\linewidth}
		\centering
		\caption{}
		\includegraphics[width=\linewidth]{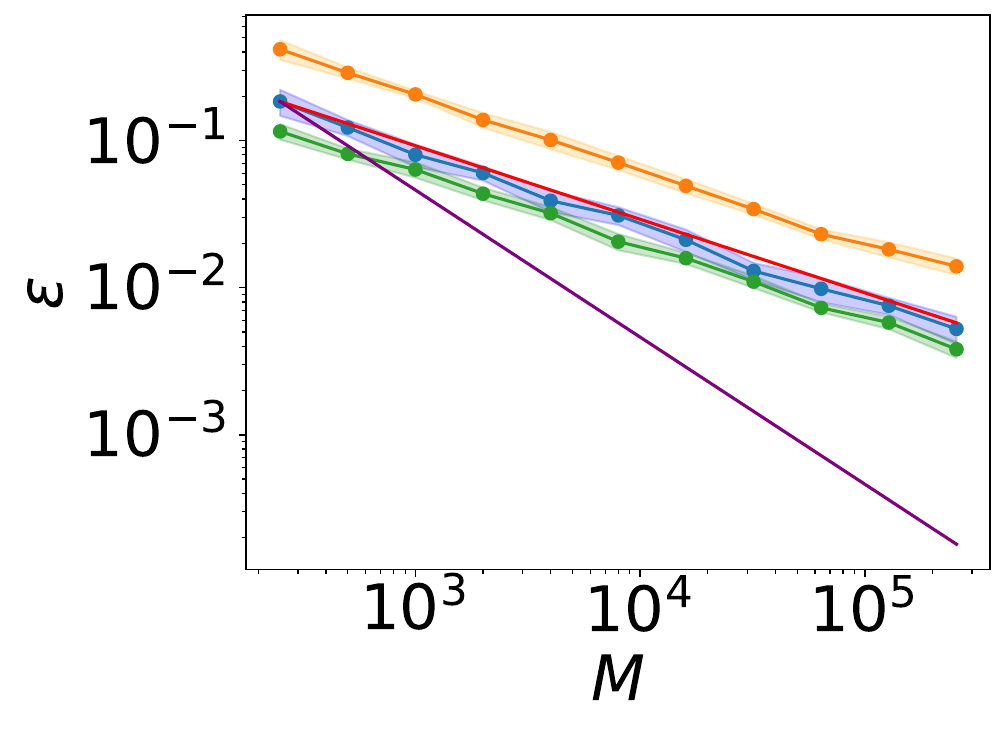}
		\label{imgOU_PF}
	\end{subfigure}
	\begin{subfigure}{.32\linewidth}
		\centering
		\caption{}
		\includegraphics[width=\linewidth]{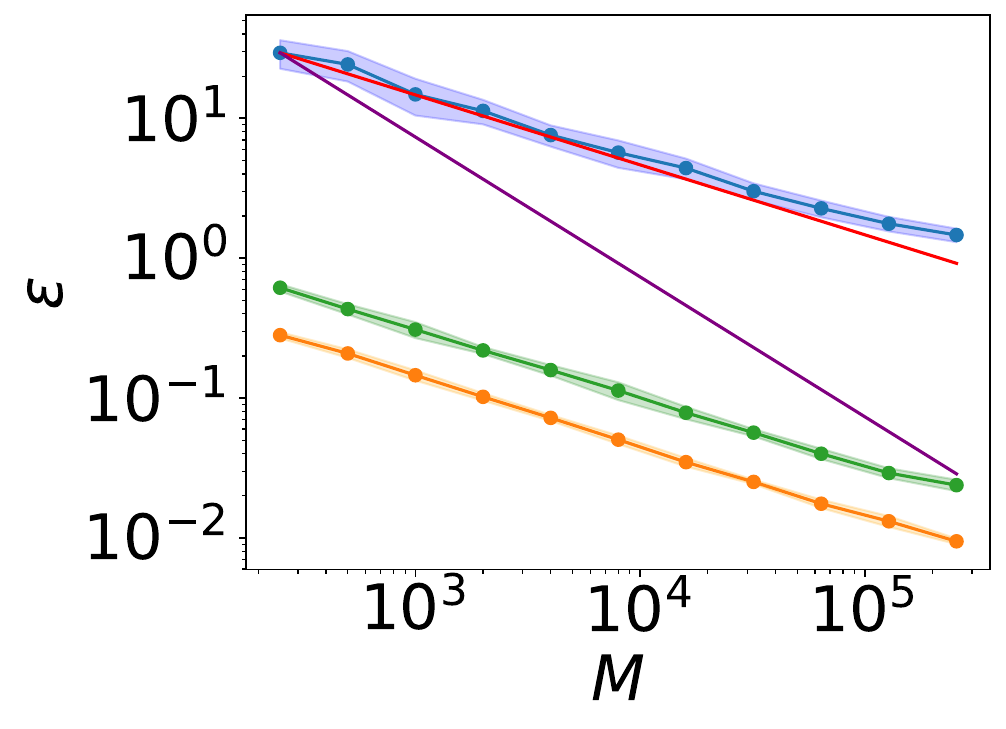}
		\label{imgOU_EDMD}
	\end{subfigure}\\
	\begin{subfigure}{\linewidth}
		\centering
		\includegraphics[width=0.8\linewidth]{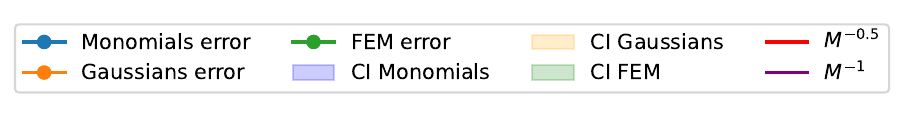}
		\label{legend}
	\end{subfigure}
	\caption{Average normalized error $\varepsilon:=\E\qty[\norm{\smash{\wh{\bm{A}}_{NM}-\bm{A}_N}}/\norm{\bm{A}_N}]$ as a function of the number of data points $M$ for:  the Koopman generator of the ODE \eqref{ODE1} in Figure~\ref{imgODE},  the Koopman generator and Koopman operator for the double-well potential \eqref{double well} in Figures~\ref{imgDoubleWell} and \ref{imgDoubleWellEDMD}, and  the Koopman generator, the Perron--Frobenius generator and the Koopman operator for the OU process in Figures~\ref{imgOU}, \ref{imgOU_PF}, and \ref{imgOU_EDMD}. In all cases, monomials up to order $8$ and the same number of Gaussian observables and FEM basis functions are used. The red and purple lines represent the slopes $-\frac{1}{2}$ and $-1$, respectively. The blue, red and green lines represent the average error over $50$ simulations of the above approximations. The shaded areas represent the 95\% confidence intervals for the respective errors.}
	\label{figDataConv}
\end{figure}

\subsection{Numerical results as the number of dictionary elements tends to infinity}

We now study the effect of the number of dictionary elements on the operator error. To do so, we consider as our basis functions the dictionary comprising Gaussian functions
from the previous subsection. We partition the domain into $N$ equally sized quadrants and define $\{\bm{p}_n\}_{n=1}^N$ to be the centres of these quadrants. We apply the data-driven algorithms with $M=10^4$ data points. We repeat this process $50$ times and calculate the average normalized operator error where again $\bm{A}_N$ is approximated by $\wh{\bm{A}}_{NM}$ with $M=10^{5}$. We then increase $N$ from $4$ to $1024$. In this case, the monomial basis functions are not chosen as for higher orders, the matrix $\bm{G}_N$ becomes ill-conditioned (for example, for monomials of order $10$ in two dimensions, we have that $\kappa(\bm{G}_N)\geq 10^{28}$). When approximating the Koopman operator for the OU process, we also use the piecewise linear FEM basis functions $\Psi^{\rm{FEM}}$. However, these are not used for gEDMD as the theoretical error requires the calculation of $\bm{T}_{ij}= \br{\Ll \psi_i,\Ll \psi_j  }$. For gEDMD, $\Ll $ is a second order operator and $\psi_i^{\rm{FEM}}$ are not twice differentiable.  Since we plot normalised errors, we cap the theoretical bound in Theorem \ref{OC theorem} in the plots at one. As can be seen in Figure \ref{figDictConv}, the error increases with the number of observables as expected in view of Theorem~\ref{OC theorem}. We observe that the theoretical bound on the error increases faster than the simulation error. This indicates that perhaps there is some underlying structure which could make the bounds tighter. Additionally, we see that the confidence interval for the FEM basis functions becomes larger as the number of basis functions increases. This is because of the increased likelihood that very few particles end up in some interval of the partition. This leads to close to a singular mass matrix and a large condition number. See the comments around Example~\ref{FEM example}.

\begin{figure}
	\centering
	\begin{subfigure}{.32\linewidth}
		\centering
		\caption{}
		\includegraphics[width=\linewidth]{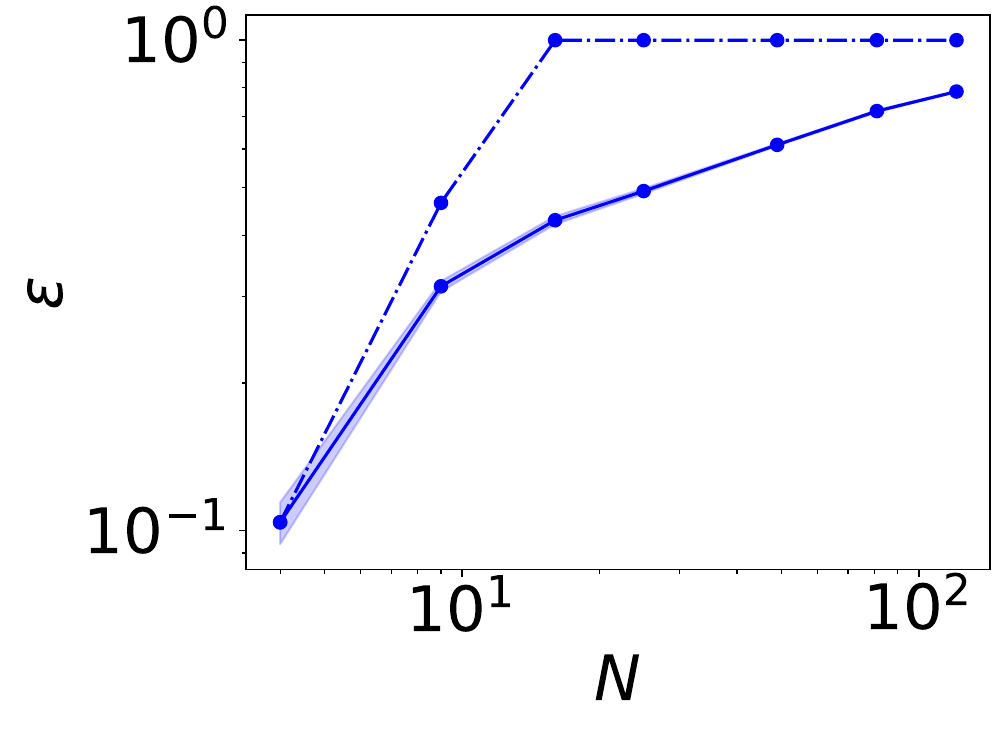}
		\label{imgODEdict}
	\end{subfigure}
	\begin{subfigure}{.32\linewidth}
		\centering
		\caption{}
		\includegraphics[width=\linewidth]{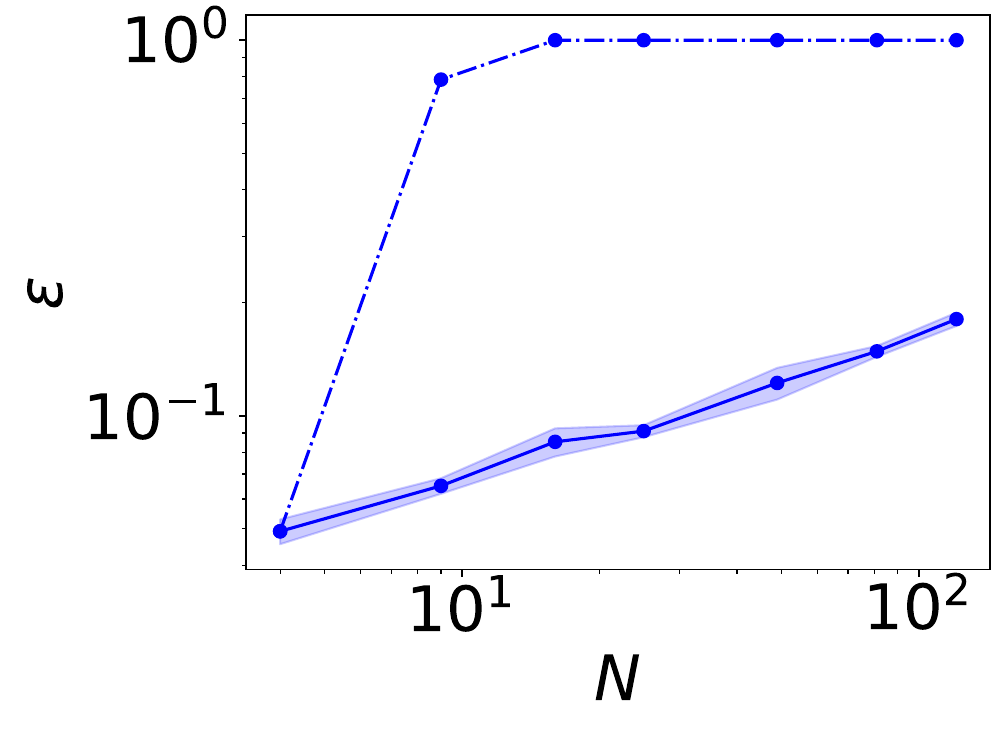}
		\label{imgDoubleWelldict}
	\end{subfigure}
	\begin{subfigure}{.32\linewidth}
		\centering
		\caption{}
		\includegraphics[width=\linewidth]{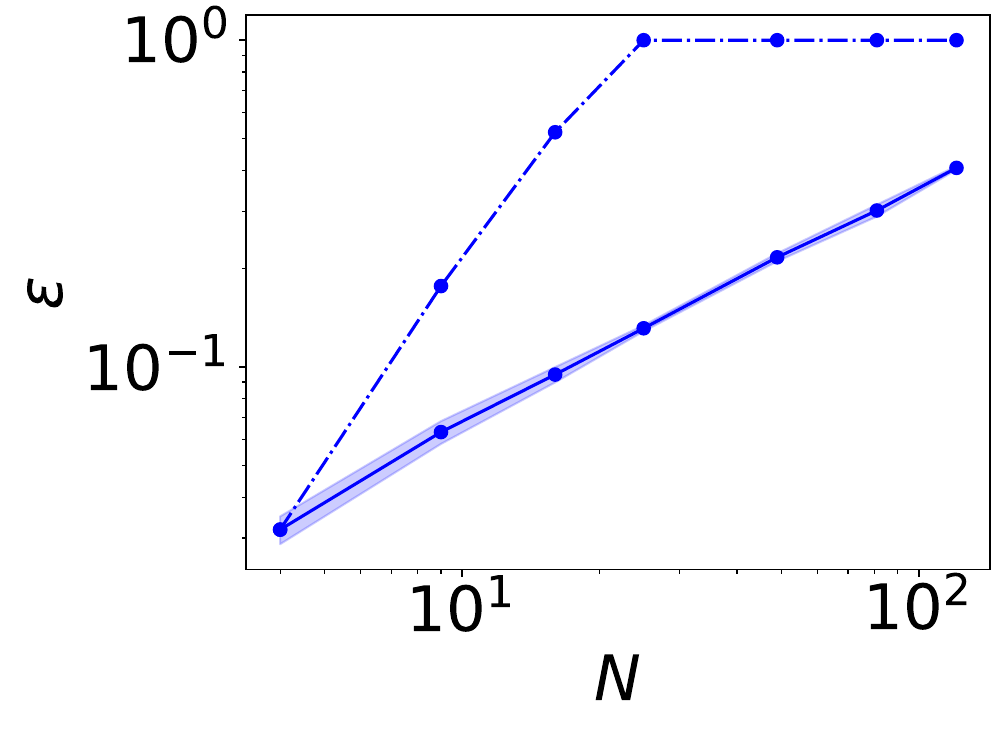}
		\label{double_well_EDMDdict}
	\end{subfigure}
	\begin{subfigure}{.32\linewidth}
		\centering
		\caption{}
		\includegraphics[width=\linewidth]{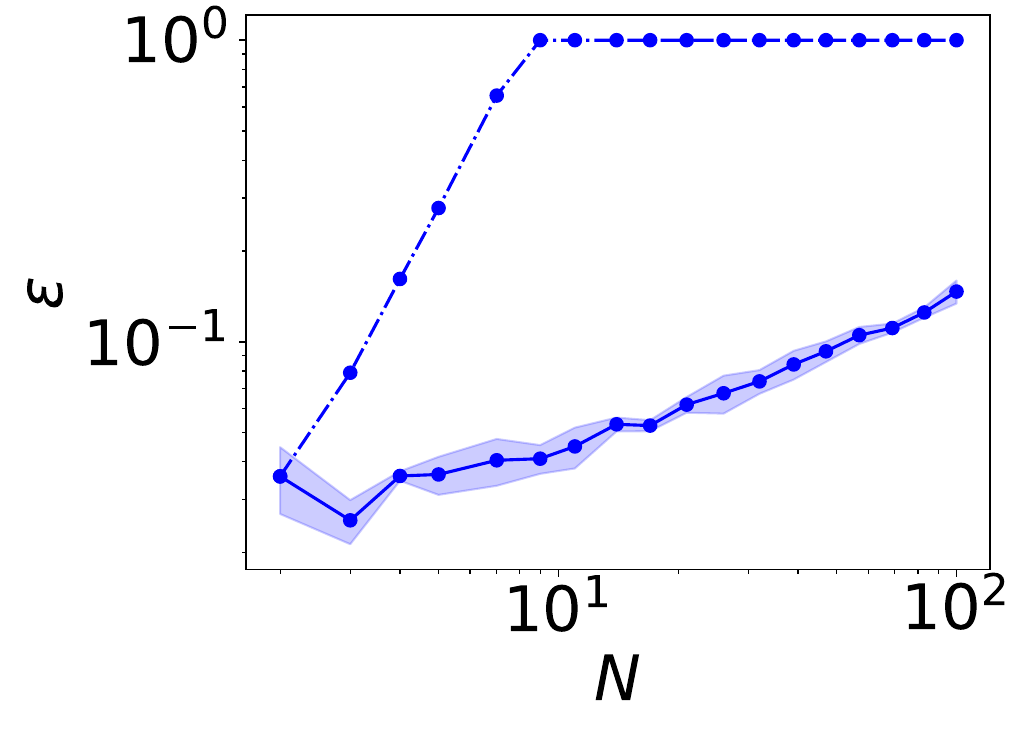}
		\label{imgOUdict}
	\end{subfigure}\begin{subfigure}{.32\linewidth}
		\centering
		\caption{}
		\includegraphics[width=\linewidth]{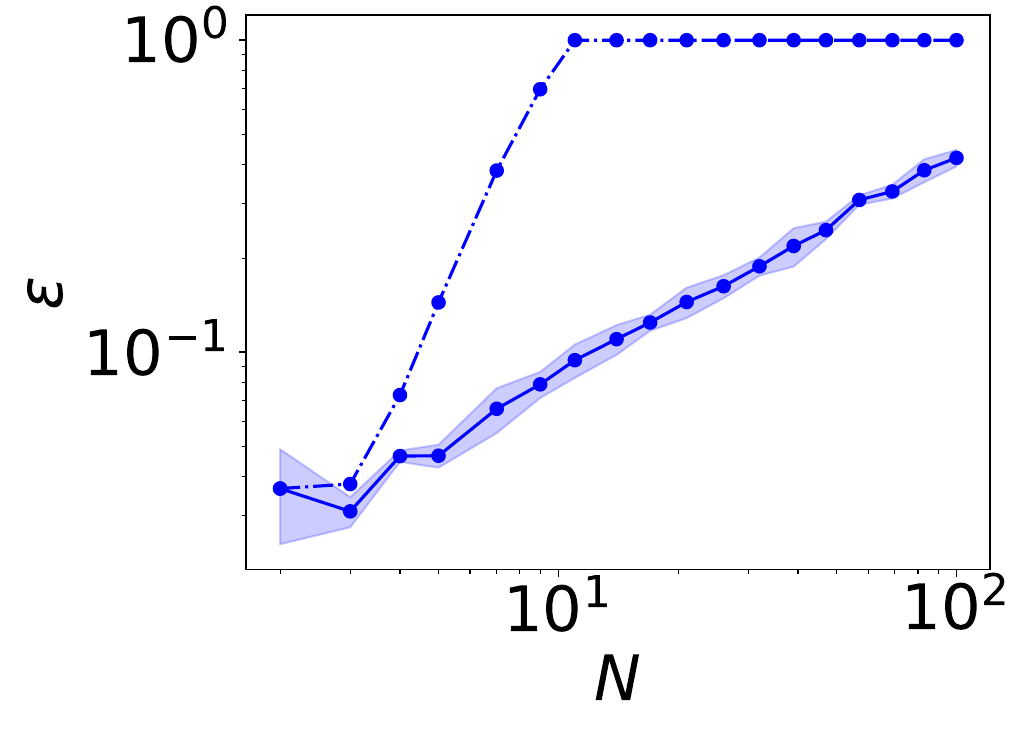}
		\label{imgOU_PFdict}
	\end{subfigure}
	\begin{subfigure}{.32\linewidth}
		\centering
		\caption{}
		\includegraphics[width=\linewidth]{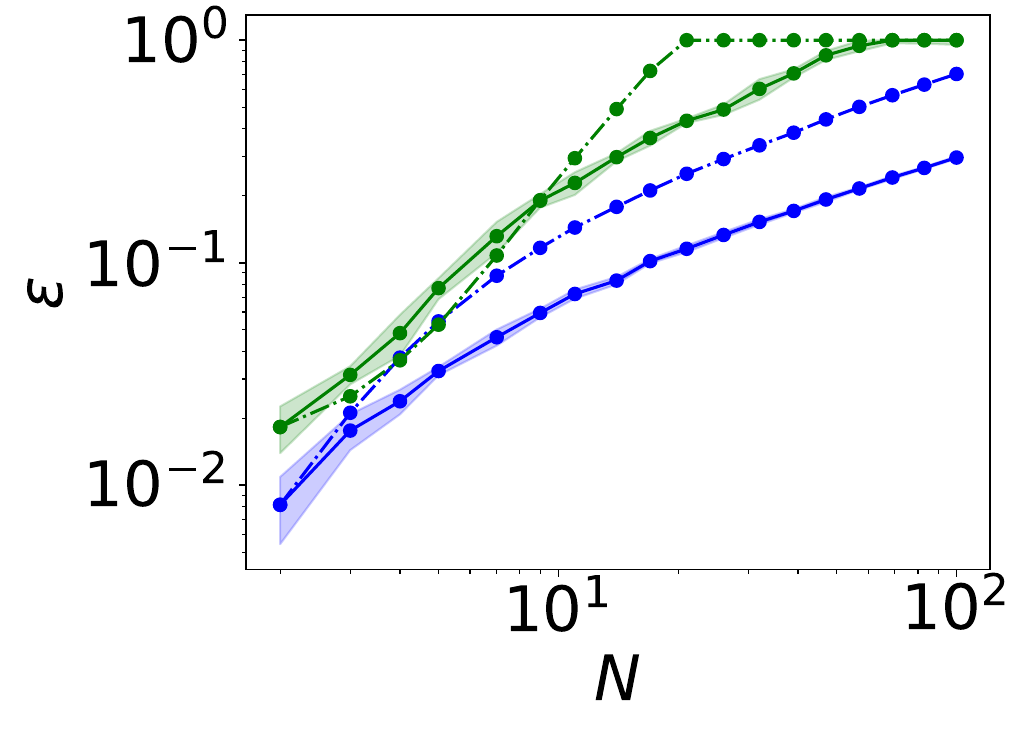}
		\label{imgOU_EDMDdict}
	\end{subfigure}
	\begin{subfigure}{\linewidth}
		\centering
		\includegraphics[width=0.8\linewidth]{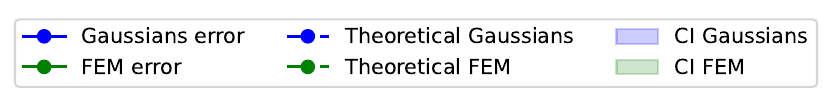}
		\label{legenddict}
	\end{subfigure}
	\caption{Average normalized error $\varepsilon:=\E\qty[\norm{\smash{\wh{\bm{A}}_{NM}-\bm{A}_N}}/\norm{\bm{A}_N}]$ and the theoretical error bound in Theorem \ref{OC theorem} as a function of the number of observables $N$ for the Koopman generator of the ODE \eqref{ODE1} in Figure~\ref{imgODEdict}, the Koopman generator and Koopman operator for the double-well system \eqref{double well} in Figures \ref{imgDoubleWelldict} and \ref{double_well_EDMDdict}, and  the Koopman generator, the Perron--Frobenius, and Koopman operator for the OU process \eqref{OU} using up to $1024$ Gaussian functions in Figures \ref{imgOUdict}, \ref{imgOU_PFdict}, and \ref{imgOU_EDMDdict}.}
	\label{figDictConv}
\end{figure}

\subsection{Numerical results with noise}

In this section, we repeat the experiments carried out in Section \ref{simulations data limit section} with the addition of a noise term as in \eqref{data2}, where we take normal i.i.d.\ noise, i.e.,
\begin{equation*}
	\qty(\bm{\eta}_n,\bm{\xi}_n) \sim \mathcal{N}\qty(0,\sigma^2 \bm{I}_{2n \times 2n}).
\end{equation*}
We increase the noise using $\sigma = 10^{-3}, 10^{-2}, 10^{-1}$ and study its effect on the normalised error. To limit the number of plots, we restrict ourselves to gEDMD for the ODE \eqref{ODE1} and gEDMD for the Perron--Frobenius operator associated with the Ornstein--Uhlenbeck process.

As we can see, the Gaussian and FEM observables are more resilient to increased noise. The FEM basis functions perform the best in the presence of noise. This is expected as we evaluated the FEM basis functions exactly to $0$ on all points outside of their support and only added noise to the non-zero ones. The lower resilience of the monomials compared to the other two is due to the fact that the condition number of the Gram matrix of the monomials is larger and thus the inverse of the Gram matrix is very sensitive to noise. In the second and third rows of Figure~\ref{figNoiseError}, the monomials are slightly more resilient to the noise. This, however, is due to the fact that, since the dimension of the domain is $1$ instead of $2$, there are fewer monomials and thus the condition number of the Gram matrix is smaller. It can be seen that even when the error without noise is exactly zero (see Figure~\ref{imgOU}), the monomial basis functions are not resilient to noise.
\begin{figure}
	\centering
	\begin{subfigure}{.32\linewidth}
		\centering
		\caption{}
		\includegraphics[width=\linewidth]{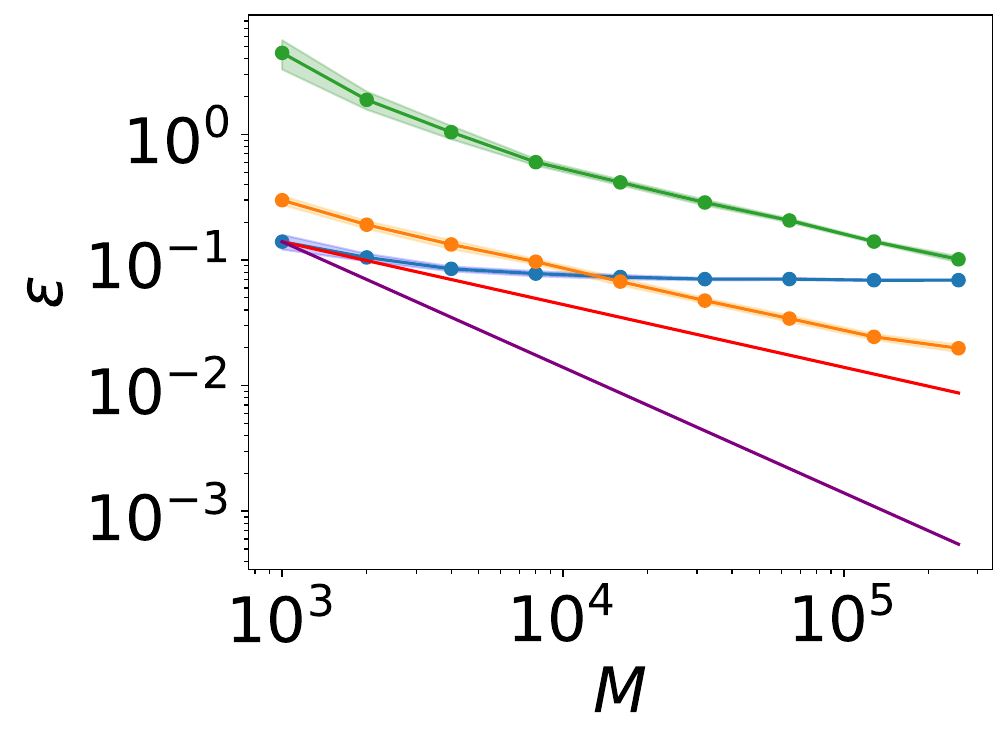}
		\label{ODE_small_noise}
	\end{subfigure}
	\begin{subfigure}{.32\linewidth}
		\centering
		\caption{}
		\includegraphics[width=\linewidth]{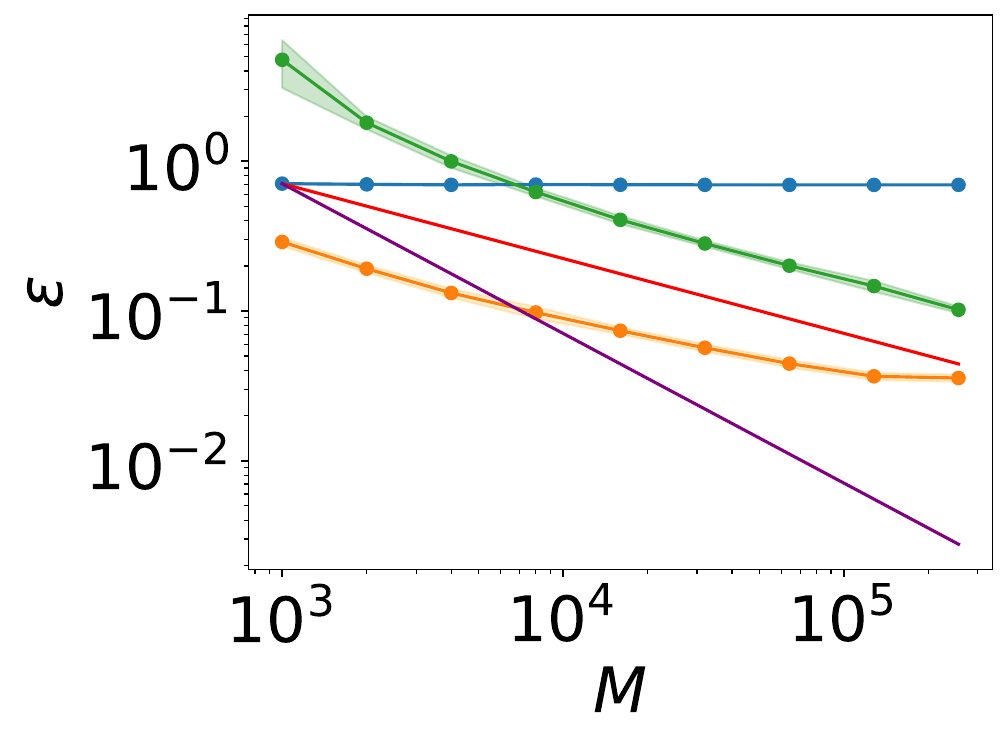}
		\label{ODE_medium_noise}
	\end{subfigure}
	\begin{subfigure}{.32\linewidth}
		\centering
		\caption{}
		\includegraphics[width=\linewidth]{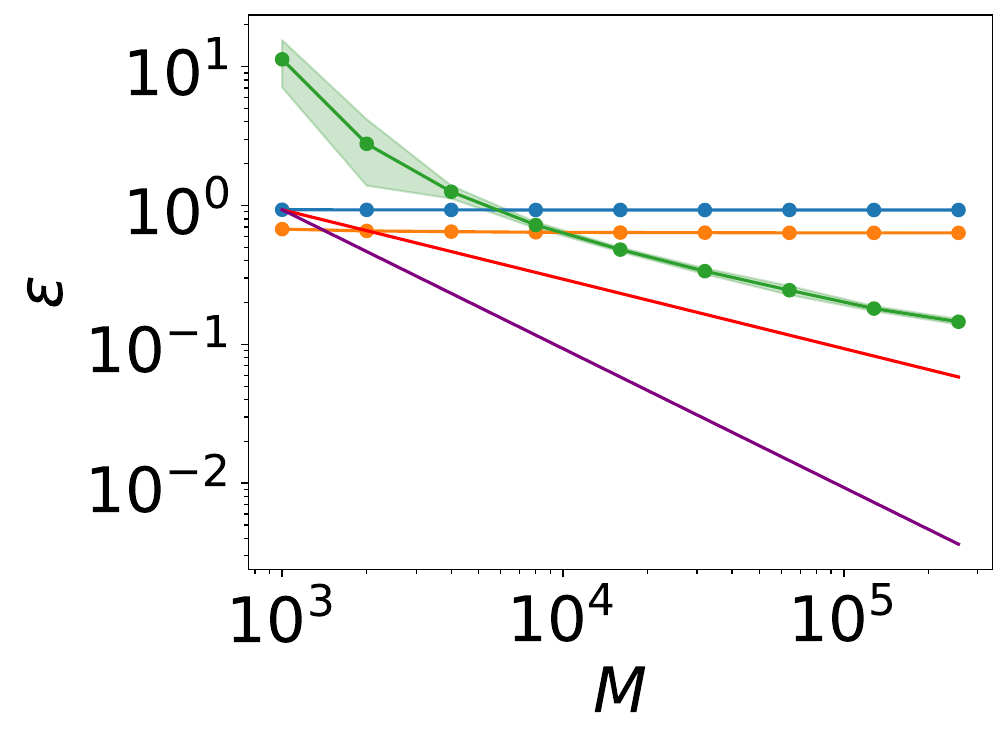}
		\label{ODE_large_noise}
	\end{subfigure}
	\begin{subfigure}{.32\linewidth}
		\centering
		\caption{}
		\includegraphics[width=\linewidth]{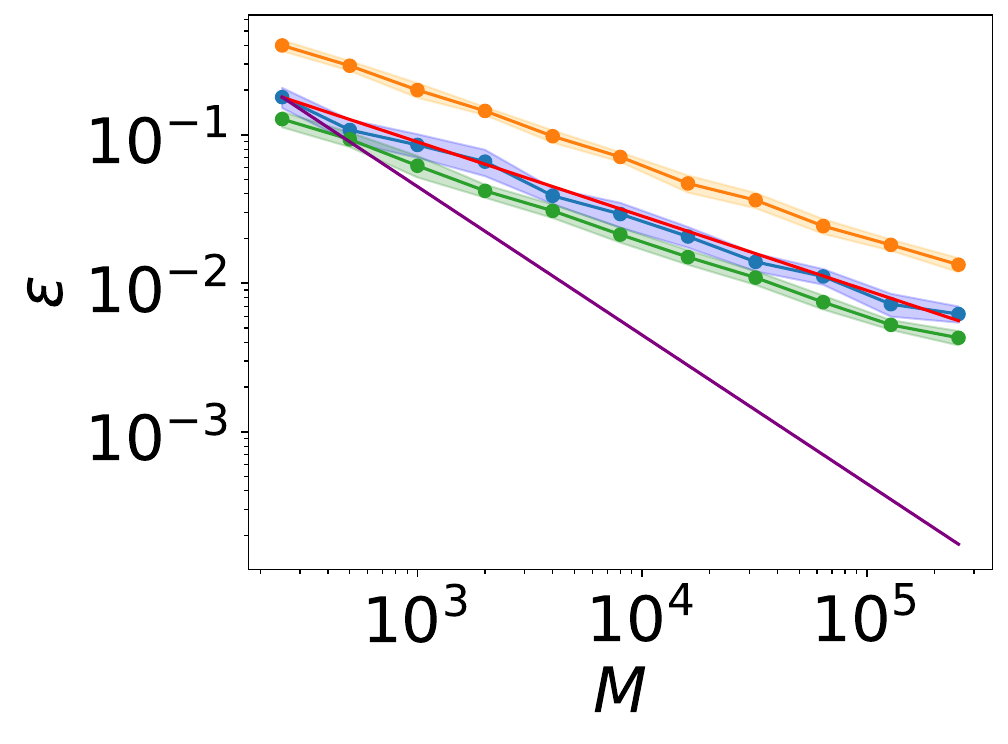}
		\label{OU_PF_small_noise}
	\end{subfigure}
	\begin{subfigure}{.32\linewidth}
		\centering
		\caption{}
		\includegraphics[width=\linewidth]{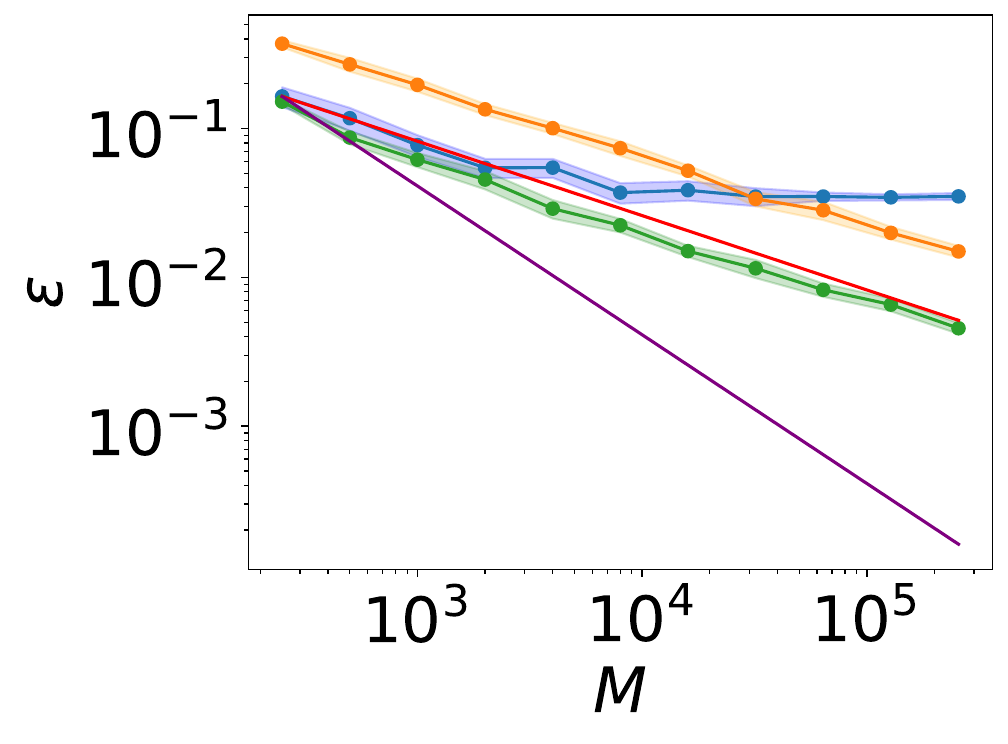}
		\label{OU_PF_medium_noise}
	\end{subfigure}
	\begin{subfigure}{.32\linewidth}
		\centering
		\caption{}
		\includegraphics[width=\linewidth]{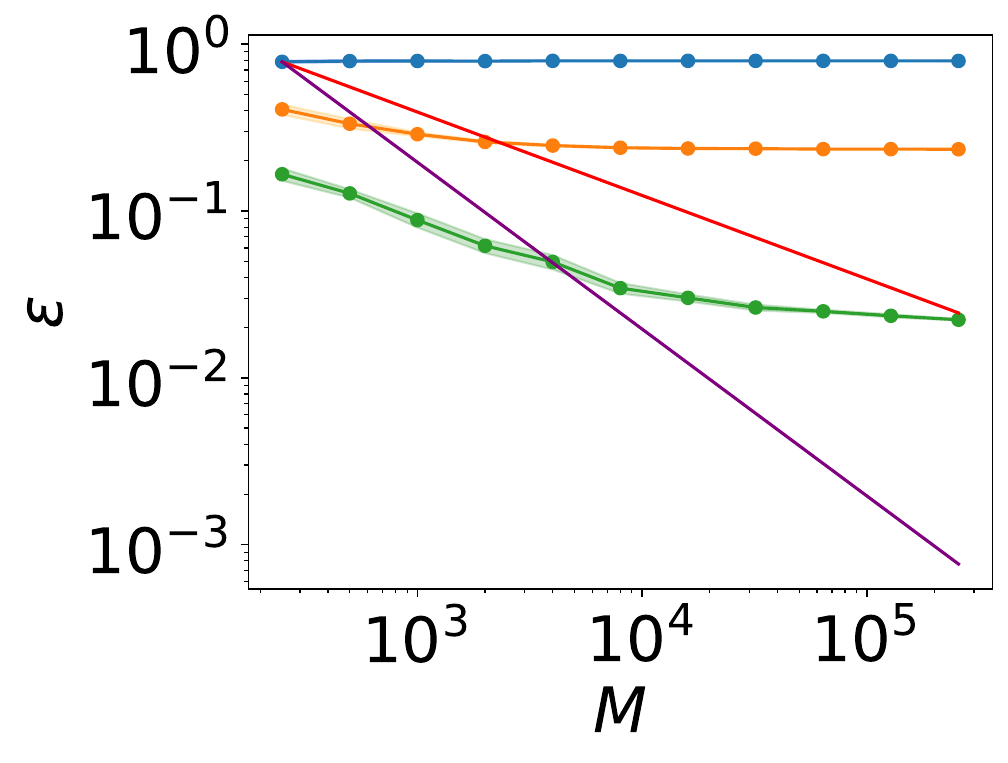}
		\label{OU_PF_large_noise}
	\end{subfigure}
	\begin{subfigure}{.32\linewidth}
		\centering
		\caption{}
		\includegraphics[width=\linewidth]{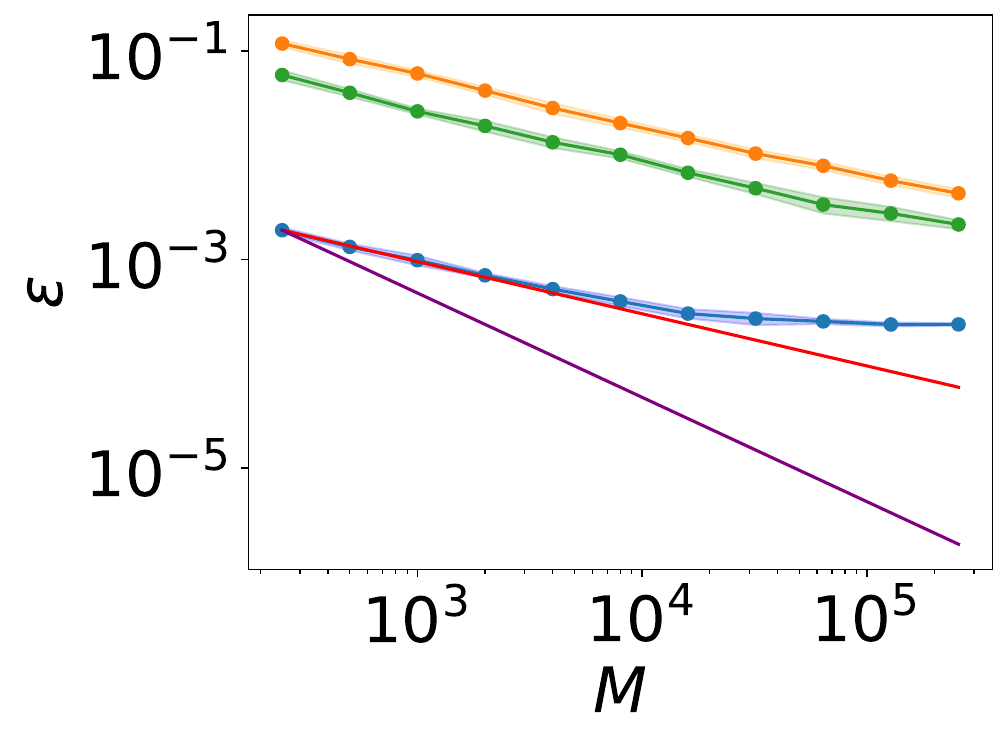}
		\label{OU_small_noise}
	\end{subfigure}
	\begin{subfigure}{.32\linewidth}
		\centering
		\caption{}
		\includegraphics[width=\linewidth]{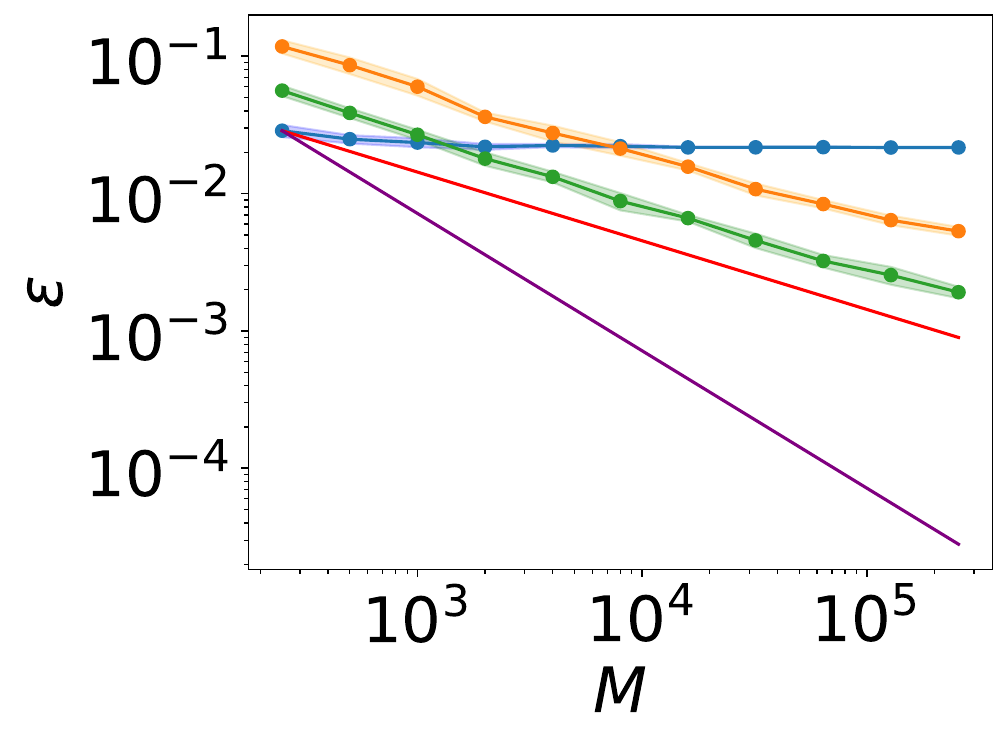}
		\label{OU_medium_noise}
	\end{subfigure}
	\begin{subfigure}{.32\linewidth}
		\centering
		\caption{}
		\includegraphics[width=\linewidth]{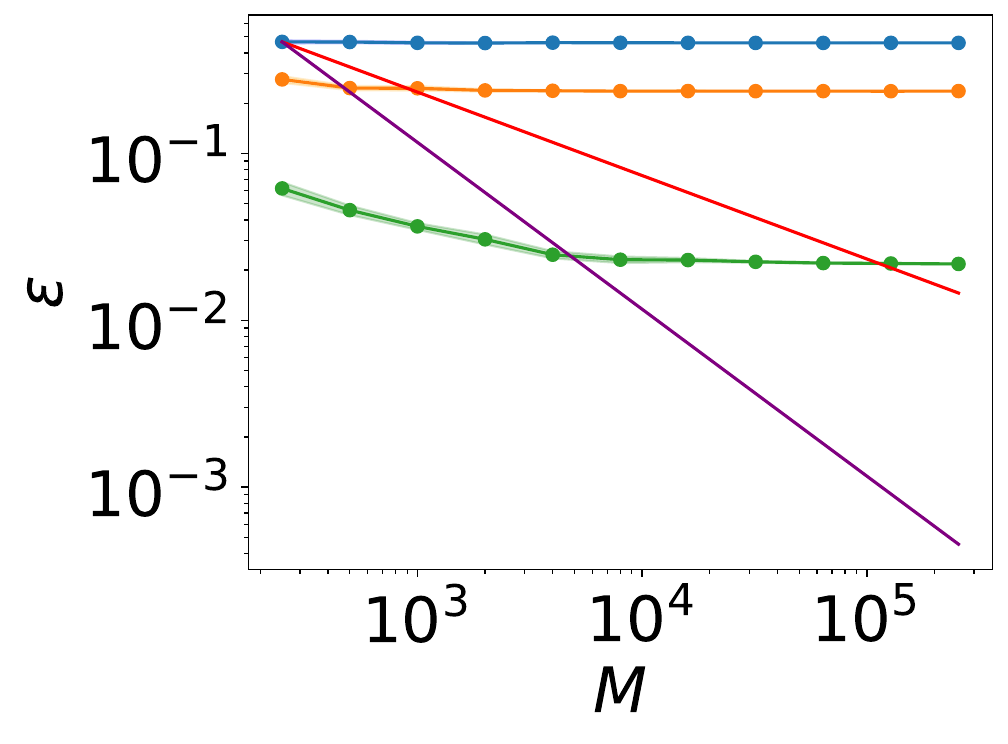}
		\label{OU_large_noise}
	\end{subfigure}
	\begin{subfigure}{\linewidth}
		\centering
		\includegraphics[width=0.8\linewidth]{Simulation_figures/Data_Limit/legend.pdf}
	\end{subfigure}
	\caption{Average normalized error $\varepsilon:=\E\qty[\norm{\smash{\wh{\bm{A}}_{NM}-\bm{A}_N}}/\norm{\bm{A}_N}]$ as a function of the number of data points $M$. In Figures~\ref{ODE_small_noise}, \ref{ODE_medium_noise}, and \ref{ODE_large_noise}, we take $\sigma =10^{-3},10^{-2},10^{-1}$, respectively, and approximate the Koopman generator of the ODE. In Figures~\ref{OU_PF_small_noise}, \ref{OU_PF_medium_noise}, \ref{OU_PF_large_noise} and in \ref{OU_small_noise}, \ref{OU_medium_noise}, \ref{OU_large_noise} we also take $\sigma =10^{-3},10^{-2},10^{-1}$ and now approximate the Perron--Frobenius operator and Koopman generator of \eqref{OU}, respectively. In all cases, monomials up to order $8$ and the same number of Gaussian observables and FEM basis functions are used. The red and purple lines represent the slopes $-\frac{1}{2}$ and $-1$, respectively. The blue, red, and green lines represent the error averaged over $50$ simulations of the above approximations. The shaded areas represent the 95\% confidence intervals for the respective errors.}
	\label{figNoiseError}
\end{figure}
\newpage
\red{\subsection{Numerical results on spectral convergence}
The results contained in Section \ref{eigen section} do not provide bounds on the speed of convergence of the spectrum. In this section, we numerically compute the difference between the eigenvalues of $\wh{\b{A}}_{NM}$ and those of $\wh{\b{A}}_N$. As an example, we consider the Ornstein--Uhlenbeck process \eqref{OU} since, when using the monomials as basis functions, the eigenvalues of their projection onto the space of monomials are given by}
\begin{align*}
	\mathrm{spec}(\Ll_N )&= -n \alpha \quad \quad  n=0,1,\ldots,N-1, \\
	\mathrm{spec}(\Kk_N^t )&= e^{-n \alpha t}, \quad n=0,1,\ldots,N-1.
\end{align*}
We use the monomial, Gaussian, and FEM basis functions of Section \ref{simulations data limit section} and calculate the spectral error 
\begin{align}\label{spectral error}
\varepsilon_{\mathrm{spec}}:=	\norm{\mathrm{spec}(\wh{\b{A}}_{NM})-\mathrm{spec}(\wh{\b{A}}_N)}_{\R^N} = \left(\sum_{n=1}^N \abs{\lambda^{(n)}_{NM} - \lambda^{(n)}_{N}}^2\right)^{1/2},
\end{align}
where $\lambda^{(n)}_{NM}$ and $\lambda^{(n)}_{N}$ are the ordered $n$-th eigenvalues of $\wh{\b{A}}_{NM}$ and $\wh{\b{A}}_{N}$, respectively.

As before, we use an increasing number of data points using $M=2^8,2^9,\dots,2^{19}$, compute \eqref{spectral error} and repeat the approximation $50$ times for each $M$ to calculate the average spectral error $\varepsilon_{\mathrm{spec}}$. In Figure \ref{figDataConv_spec}, we plot in log-log scale the relationship between $M$ and $\varepsilon_{\mathrm{spec}}$, including a 95\% confidence interval for the error. To serve as a reference, we show dashed lines with slope $-\frac{1}{2}$ and $-1$, respectively.

Since the subspace spanned by the monomial basis functions is invariant under the Koopman generator, the error using monomial basis functions becomes zero, see Corollary~\ref{exact approximation}. Otherwise, the error has a slope of approximately $-\frac{1}{2}$. This suggests that, in some particular cases, error bounds similar to those of Section \ref{convergence Galerkin section} may hold for the spectrum.

\begin{figure}[h]
	\centering
	\begin{subfigure}{.32\linewidth}
		\centering
		\caption{}
		\includegraphics[width=\linewidth]{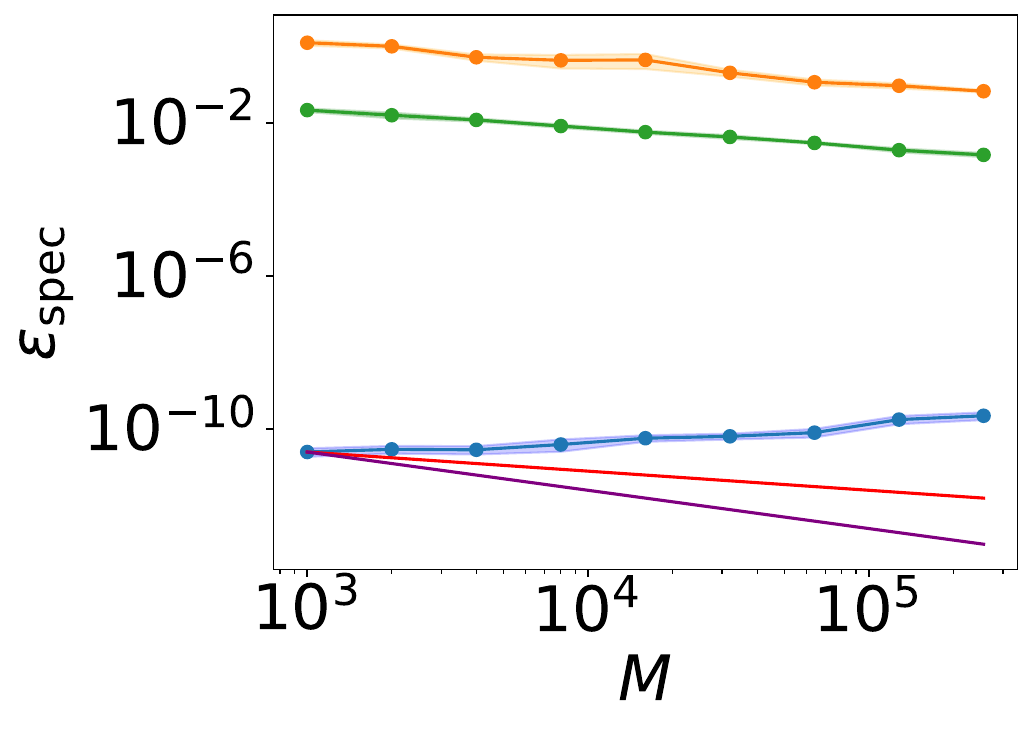}
		\label{imgOU_spec}
	\end{subfigure}
	\begin{subfigure}{.32\linewidth}
		\centering
		\caption{}
		\includegraphics[width=\linewidth]{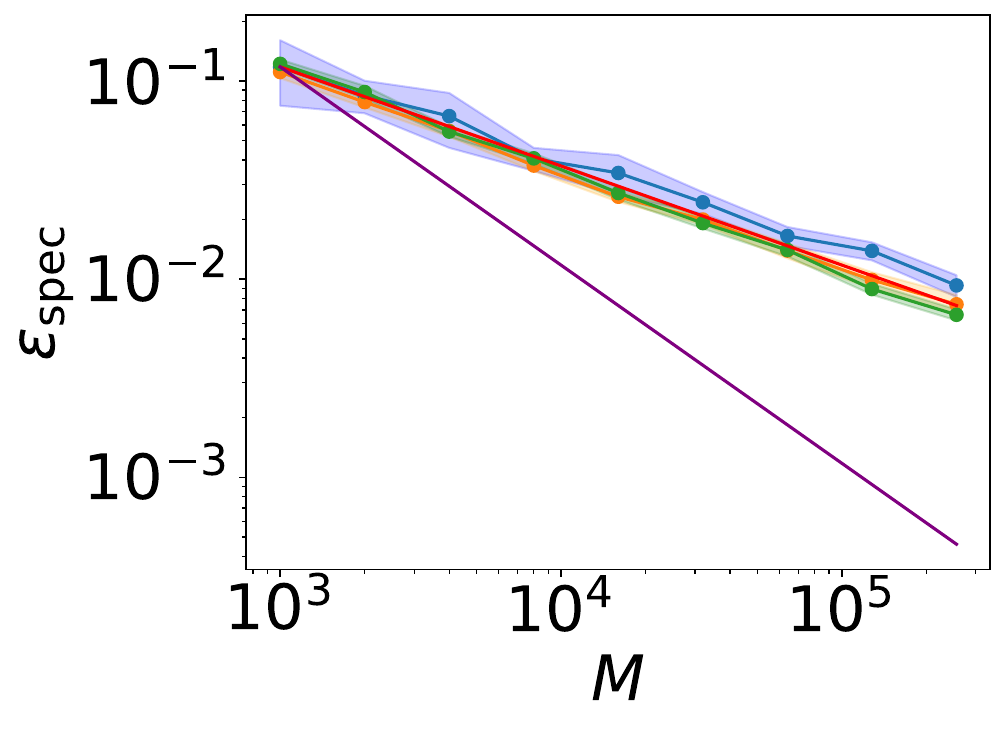}
		\label{imgOU_EDMD_spec}
	\end{subfigure}\\
	\begin{subfigure}{\linewidth}
		\centering
		\includegraphics[width=0.8\linewidth]{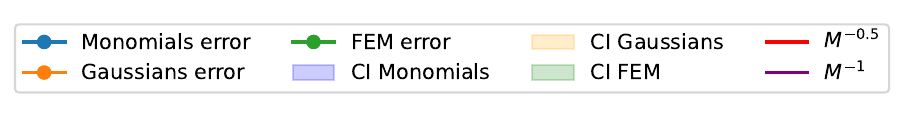}
		\label{legend_spec}
	\end{subfigure}
	\caption{Average spectral error $\varepsilon_{\mathrm{spec}}$ as a function of the number of data points $M$ for:  the Koopman generator, and the Koopman operator for the OU process \eqref{OU} in Figures~\ref{imgOU_spec}, and \ref{imgOU_EDMD_spec}. In all cases, monomials up to order $8$ and the same number of Gaussian observables and FEM basis functions are used. The red and purple lines represent the slopes $-\frac{1}{2}$ and $-1$, respectively. The blue, red and green lines represent the average error over $50$ simulations of the above approximations. The shaded areas represent the 95\% confidence intervals for the respective errors.}
	\label{figDataConv_spec}
\end{figure}
\section{Conclusion}\label{conclusion section}

In this article, we have investigated the approximation of an operator $\Aa$ from point evaluations of a dictionary to which the operator has been applied. That is, we have assumed that we have training data of the form
\begin{equation*}
	\big\{\psi_n(\bm{x}_m), \Aa \psi_n (\bm{x}_m)\big\}_{m,n=1}^{M,N},
\end{equation*}
where the evaluations could additionally be subject to random noise. After describing the estimation procedure for linear operators from data, we have presented a thorough convergence and error analysis in the dictionary limit ($N \rightarrow \infty$), the data limit ($M \rightarrow \infty$), and their joint limit ($N, M \rightarrow \infty$). We have studied the convergence of the full operators as well as their spectra.

Throughout this work, we have usually thought of the approximation of transfer operators in the context of dynamical systems, such as Koopman operators, Perron--Frobenius operators, and their generators. The framework we have presented is clearly not limited to such operators. Indeed, it generalises approximation techniques for transfer operators, such as EDMD and gEDMD, to general operators. EDMD and gEDMD fall naturally into our framework and we have shown significant new convergence results and error analyses for these methods. Additionally, we have verified our analytical results in numerical experiments.

\bibliography{biblio}

\appendix

\section{When does a matrix determine an operator?}

As discussed in Section \ref{notation section}, a matrix may not always define an operator when it is interpreted as acting on a set of vectors that are not linearly independent. The following lemma gives necessary and sufficient conditions for such an operator to be well-defined.
\begin{lemma} \label{operato lemma}
	Let $\bm{T} \in \C^{N_2\times N_1},\Psi_1=\set{\psi_n}_{n=1}^{N_1},\Psi_2=\set{\phi_n}_{n=1}^{N_2}, V_1= \mspan(\Psi_1)$ and $V_2= \mspan(\Psi_2)$. Given
	\begin{equation*}
		v=\sum_{j=1}^{N_1}c_j \psi_j \in V_1,
	\end{equation*}
	define
	\begin{equation} \label{matrix gives operator}
		\Tt v := \sum_{i=1}^{N_2}\sum_{j=1}^{N_1}c_j\bm{T}_{ij}\phi_i,
	\end{equation}
	then $\Tt \colon V_1 \to V_2 $ is a well-defined linear operator if and only if for all $(c_1, \dots,c_{N_1}) \in \C^{N_1}$
	\begin{equation} \label{well-defined}
		\sum_{j=1}^{N_1}c_j \psi_j=0 \implies \sum_{i=1}^{N_2}\sum_{j=1}^{N_1} c_j \bm{T}_{ij}\phi_i=0 .
	\end{equation}
	Then, $\bm{T}=\bm{T}^{\Psi_1 \to \Psi _2}$, that is, $\bm{T}$ is a matrix representation of $\Tt$ with respect to $\Psi_1, \Psi _2$.
\end{lemma}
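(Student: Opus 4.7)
The proof has two directions plus the matrix-representation claim, and the key observation is that the definition \eqref{matrix gives operator} of $\Tt v$ is phrased in terms of a particular expansion $(c_1,\dots,c_N)$ of $v$, which may not be unique when $\Psi$ is linearly dependent. So ``$\Tt$ is well-defined'' means precisely that the right-hand side of \eqref{matrix gives operator} is independent of the choice of expansion. My plan is to reduce this invariance to condition \eqref{well-defined} by taking differences of two expansions of the same vector.

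For the forward direction, assume $\Tt\colon V\to V$ is a well-defined linear map. Suppose $\sum_j c_j\psi_j=0$. Then on the one hand $\Tt 0 = 0$ since $\Tt$ is linear (or equivalently, by applying \eqref{matrix gives operator} to the trivial expansion with all coefficients zero). On the other hand, applying \eqref{matrix gives operator} to the expansion $(c_1,\dots,c_N)$ of the same vector $0$ yields $\sum_{i,j}c_j\bm{T}_{ij}\psi_i$. Well-definedness forces these two results to agree, giving exactly \eqref{well-defined}.

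For the backward direction, assume \eqref{well-defined} holds. To show that \eqref{matrix gives operator} actually defines a map on $V$, take two expansions $v=\sum_j c_j\psi_j=\sum_j c'_j\psi_j$. Then $(c_j-c'_j)$ satisfies $\sum_j(c_j-c'_j)\psi_j=0$, and by \eqref{well-defined} applied to this tuple we get $\sum_{i,j}(c_j-c'_j)\bm{T}_{ij}\psi_i=0$, which means the two candidate outputs of $\Tt v$ coincide. Linearity of $\Tt$ then follows by inspection from \eqref{matrix gives operator} since the map $(c_1,\dots,c_N)\mapsto\sum_{i,j}c_j\bm{T}_{ij}\psi_i$ is linear in the coefficients and any linear combination of vectors in $V$ can be written by adding coefficient vectors termwise.

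Finally, to see that $\bm{T}=\bm{T}^{\Psi}$ in the sense of the notation of Section~\ref{notation section}, apply \eqref{matrix gives operator} to $v=\psi_j$ using the obvious expansion $c_k=\delta_{jk}$; this gives $\Tt\psi_j=\sum_i\bm{T}_{ij}\psi_i$, which is exactly the defining relation of a matrix representation of $\Tt$ with respect to $\Psi$. The only mildly delicate point is that when $\Psi$ is linearly dependent this matrix representation is not unique, but the statement only claims that $\bm{T}$ is \emph{a} matrix representation, so no further work is required. The main (and only) conceptual obstacle is noticing that ``well-defined'' in this setting really means invariance under reparametrisation of $v$ in $\Psi$, after which the rest is essentially tautological.
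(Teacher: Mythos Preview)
Your proof is correct and follows essentially the same approach as the paper: both directions are handled by comparing two expansions of the same vector (the paper uses $b_j,b_j'$ where you use $c_j,c_j'$, and for the reverse implication both compare the given expansion of $0$ with the trivial one), and the matrix-representation claim is verified by plugging in $v=\psi_j$. The only cosmetic difference is that you spell out the forward direction slightly more explicitly via the trivial expansion, whereas the paper invokes linearity directly; the content is the same.
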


\begin{proof}
	Suppose that condition \eqref{well-defined} holds. To see that $\Tt$ is well-defined, we check that, if $v \in V_1$ has two representations
	\begin{equation*}
		v=\sum_{j=1}^{N_1} b_j \psi_j= \sum_{j=1}^{N_1}b _j' \psi_j,
	\end{equation*}
	then $\Tt$ is equal on both representations. That is,
	\begin{equation*}
		\sum_{i=1}^{N_2}\sum_{j=1}^{N_1}b _j \bm{T}_{ij}\phi_i-\sum_{i=1}^{N_2}\sum_{j=1}^{N_1}b'_j \bm{T}_{ij}\phi_i=\sum_{i=1}^{N_2}\sum_{j=1}^{N_1}(b _j-b'_j) \bm{T}_{ij}\phi_i = 0.
	\end{equation*}
	This holds by \eqref{well-defined} with $c_j := b_j-b _j'$. The fact that $\Tt \colon V_1 \to V_2$ is linear follows from the definition in \eqref{matrix gives operator}.
	The reverse implication follows from the linearity of $\Tt$ as, if $v=\sum_{j=1}^{N_1}c_j \psi_j=0$, then
	\begin{equation*}
		0=\Tt v =\sum_{i=1}^{N_2}\sum_{j=1}^{N_1} c_j\bm{T}_{ij}\phi_i.
	\end{equation*}
	Finally, to check that if \eqref{well-defined} holds, then $\bm{T}= \bm{T}^{\Psi_1 \to \Psi _2}$ it suffices to take $v= \psi_j$ in \eqref{matrix gives operator}.
\end{proof}

\section{Bernstein inequality}

The following results can be found in \cite[page 8]{tropp2015introduction}.
\begin{theorem} \label{Bernstein theorem}
	Let $\bm{S}_1, \dots, \bm{S}_M \in \C^{N \times N}$ be independent, random matrices such that
	\begin{equation*}
		\mathbb{E}[\bm{S}_m]=\bm{0} \text { and }\left\|\bm{S}_m\right\| \leq L, \quad\forall m \in \set{1, \dots, M}.
	\end{equation*}
	Consider the sum
	\begin{equation*}
		\bm{Z}=\sum_{m=1}^M \bm{S}_m,
	\end{equation*}
	and let $v(Z)$ denote the matrix variance statistic of the sum:
	\begin{equation*}
		\nu(\bm{Z}) = \max \left\{\big\|\mathbb{E}\left(\bm{Z} \bm{Z}^\dagger\right)\big\|, \big\|\mathbb{E}\left(\bm{Z}^\dagger \bm{Z}\right)\big\|\right\}.
	\end{equation*}
	Then for every $\delta >0$
	\begin{equation*}
		\mathbb{P}\{\|\bm{Z}\| \geq \delta \} \leq 2N \exp \left(\frac{-\delta ^2 / 2}{\nu(\bm{Z})+L \delta / 3}\right), \quad\forall \delta \geq 0.
	\end{equation*}
\end{theorem}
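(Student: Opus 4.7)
The plan is to follow the matrix Laplace transform method developed by Ahlswede--Winter and refined by Tropp. The first step is to reduce to the Hermitian case by passing to the self-adjoint dilation
\begin{equation*}
\mathcal{D}(\bm{S}) := \begin{bmatrix}\bm{0} & \bm{S} \\ \bm{S}^\dagger & \bm{0}\end{bmatrix},
\end{equation*}
whose largest eigenvalue equals $\norm{\bm{S}}$. Since the hypotheses (zero mean, spectral norm bounded by $L$, matrix variance statistic $\nu(\bm{Z})$) all transfer cleanly to $\mathcal{D}(\bm{S}_m)$, this lets me bound $\mathbb{P}(\norm{\bm{Z}}\geq \delta)$ via a one-sided tail on $\lambda_{\max}(\mathcal{D}(\bm{Z}))$, at the cost of doubling the ambient dimension (which yields the prefactor $2N$).

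Next I would invoke the matrix Chernoff bound: for each $\theta>0$, Markov's inequality applied to the trace exponential gives $\mathbb{P}(\lambda_{\max}(\bm{Z})\geq \delta) \leq e^{-\theta \delta}\,\mathbb{E}[\rm{tr}\exp(\theta\bm{Z})]$. The main technical step is to control the trace moment generating function using the subadditivity of matrix cumulants, which is a direct consequence of Lieb's concavity theorem. This yields
\begin{equation*}
\mathbb{E}[\rm{tr}\exp(\theta \bm{Z})] \leq \rm{tr}\exp\Bigl(\sum_{m=1}^M \log \mathbb{E}[\exp(\theta \bm{S}_m)]\Bigr).
\end{equation*}

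Each local cumulant is then controlled by lifting the scalar inequality $e^x \leq 1+x+g(L\theta)\, x^2/L^2$ on $[-L\theta,L\theta]$, with $g(y) := (e^y-y-1)/y^2$, to matrices via the functional calculus. Using $\norm{\bm{S}_m}\leq L$ and $\mathbb{E}[\bm{S}_m]=\bm{0}$ this gives $\mathbb{E}[\exp(\theta\bm{S}_m)] \preceq \Id + g(L\theta)\theta^2\,\mathbb{E}[\bm{S}_m^2]$, and operator monotonicity of the logarithm together with $\log(\Id + \bm{A}) \preceq \bm{A}$ turns this into $\log\mathbb{E}[\exp(\theta\bm{S}_m)] \preceq g(L\theta)\theta^2\,\mathbb{E}[\bm{S}_m^2]$. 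Summing and applying the trace exponential, together with the definition of $\nu(\bm{Z})$, bounds the right-hand side by $N\exp\bigl(g(L\theta)\theta^2\,\nu(\bm{Z})\bigr)$.

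Combining, $\mathbb{P}(\lambda_{\max}(\bm{Z})\geq \delta) \leq N\exp\bigl(-\theta \delta + g(L\theta)\theta^2\,\nu(\bm{Z})\bigr)$, and optimising in $\theta>0$ (using the elementary bound $g(y)\leq \tfrac{1}{2}(1-y/3)^{-1}$ for $y<3$ and the near-optimal choice $\theta = \delta/(\nu(\bm{Z})+L\delta/3)$) recovers the stated Bernstein exponent; the dilation turns $N$ into $2N$. The main obstacle is Lieb's concavity theorem and the resulting subadditivity of matrix cumulants, which is the genuinely deep ingredient; the remaining steps amount to scalar calculus lifted through standard operator-monotone matrix inequalities.
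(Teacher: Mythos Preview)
Your outline is the standard matrix Laplace transform proof due to Tropp, and it is correct as sketched: dilation to reduce to the Hermitian case, Markov on the trace exponential, Lieb's concavity to get subadditivity of matrix cumulants, the scalar Bernstein bound lifted via the functional calculus, and optimisation in~$\theta$. One small wording point: in the non-Hermitian dilation argument you should be summing $\mathbb{E}[\bm{S}_m\bm{S}_m^\dagger]$ and $\mathbb{E}[\bm{S}_m^\dagger\bm{S}_m]$ (which appear as the diagonal blocks of $\mathbb{E}[\mathcal{D}(\bm{S}_m)^2]$), not $\mathbb{E}[\bm{S}_m^2]$; this is how $\nu(\bm{Z})$ enters.

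As to comparison with the paper: the paper does not prove this theorem at all. It is stated in the appendix with the remark that it ``can be found in \cite[page~8]{tropp2015introduction}'', and is then used as a black box to derive Corollary~\ref{Bernstein Lemma}. So your proposal is not a different route from the paper's proof; it is simply a proof where the paper gives none, and it is in fact the argument contained in the cited reference.
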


\begin{corollary}[Bernstein inequality for the covariance] \label{Bernstein Lemma}
	Let $\bm{c}$ and $\bm{g}$ be two random vectors in $\C^n$ such that almost everywhere
	\begin{equation*}
		\abs{\bm{c}}^2\leq \gamma, \quad \abs{\bm{g}}^2\leq \gamma.
	\end{equation*}
	Let $\set{\bm{c}_m}_{m=1}^M, \set{\bm{g}_m}_{m=1}^M $ be copies of $\bm{c}$ and $\bm{g}$ respectively and such that $ \big\{\bm{c}_m \, \bm{g}^\dagger_m\big\}_{m=1}^M$ are independent. Define the matrices,
	\begin{align*}
		\bm{G}   & := \E\big[\bm{g}\bm{g^\dagger}\big], \quad \bm{T} := \E\big[\bm{c}\bm{c}^\dagger\big], \quad \bm{C} := \E[\bm{c}\bm{g}^\dagger], \\
		\bm{S}_m & := \frac{1}{M}\qty( \bm{c}_m\bm{g}_m^\dagger- \bm{C}), \quad \bm{Z} := \sum_{m=1}^M \bm{S}_m.
	\end{align*}
	Then
	\begin{equation*}
		\mathbb{P}\{\|\bm{Z}\| \geq \delta \} \leq 2 N \exp(\frac{-M \delta ^2 /2 }{ \gamma\qty(\max \set{\norm{\bm{T}}, \norm{\bm{G}}} +{2 \delta }/{3})}).
	\end{equation*}
	Furthermore, for all $p \in (0,1)$ and for all
	\begin{equation*}
		M>(3 \max \set{ \norm{\bm{G}}, \norm{\bm{T}}}+2 \delta ) \frac{2 \gamma}{3 \delta ^2}\log \left(\frac{2
			N}{1-p}\right),
	\end{equation*}
	it holds that
	\begin{equation*}
		\mathbb{P}\{\|\bm{Z}\| < \delta \}\geq p.
	\end{equation*}
\end{corollary}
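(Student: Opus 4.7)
The strategy is to apply Theorem~\ref{Bernstein theorem} directly to the centred summands $\bm{S}_m = \frac{1}{M}\qty(\bm{c}_m\bm{g}_m^\dagger - \bm{C})$, which by hypothesis are independent and by construction have mean zero. Two ingredients are required: an almost-sure uniform bound $\norm{\bm{S}_m}\leq L$, and a bound on the matrix variance statistic $\nu(\bm{Z})$ expressed in terms of $\norm{\bm{T}}$ and $\norm{\bm{G}}$.

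For the uniform bound I will use $\norm{\bm{c}_m\bm{g}_m^\dagger} = \abs{\bm{c}_m}\abs{\bm{g}_m}\leq \gamma$ a.s., combined with Jensen's inequality to obtain $\norm{\bm{C}}\leq \E\qb{\norm{\bm{c}\bm{g}^\dagger}}\leq \gamma$, which gives $\norm{\bm{S}_m}\leq 2\gamma/M =: L$. The factor $2$ is slightly loose but is exactly what produces the $2\delta/3$ term in the denominator of the final exponent, via the $L\delta/3$ slot of Bernstein's inequality.

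For the variance, the key step is the expansion
\begin{equation*}
\bm{S}_m\bm{S}_m^\dagger = \frac{1}{M^2}\qty(\abs{\bm{g}_m}^2\bm{c}_m\bm{c}_m^\dagger - \bm{c}_m\bm{g}_m^\dagger\bm{C}^\dagger - \bm{C}\bm{g}_m\bm{c}_m^\dagger + \bm{C}\bm{C}^\dagger).
\end{equation*}
Taking expectations, the cross terms combine with the constant to leave $\E[\bm{S}_m\bm{S}_m^\dagger] = \frac{1}{M^2}\qty(\E\qb{\abs{\bm{g}_m}^2\bm{c}_m\bm{c}_m^\dagger} - \bm{C}\bm{C}^\dagger)$. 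Since $\abs{\bm{g}_m}^2\leq \gamma$ a.s., in the Löwner order $\E\qb{\abs{\bm{g}_m}^2\bm{c}_m\bm{c}_m^\dagger}\preceq \gamma\,\bm{T}$, and dropping the PSD term $\bm{C}\bm{C}^\dagger$ gives $\E[\bm{S}_m\bm{S}_m^\dagger]\preceq \gamma\bm{T}/M^2$. Summing the $M$ independent contributions and taking operator norms yields $\norm{\E[\bm{Z}\bm{Z}^\dagger]}\leq \gamma\norm{\bm{T}}/M$. The symmetric computation for $\bm{S}_m^\dagger\bm{S}_m$ gives $\norm{\E[\bm{Z}^\dagger\bm{Z}]}\leq \gamma\norm{\bm{G}}/M$, so $\nu(\bm{Z})\leq \gamma\max\set{\norm{\bm{T}},\norm{\bm{G}}}/M$.

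Substituting $L$ and $\nu(\bm{Z})$ into Theorem~\ref{Bernstein theorem} immediately delivers the first inequality of the statement. The second part is pure algebra: requiring the right-hand side of the tail bound to be at most $1-p$ and solving for $M$ isolates the stated threshold. I do not foresee any genuine obstacle here; the only point requiring attention is the choice, in each variance computation, to trade the random scalar $\abs{\bm{g}_m}^2$ (resp.\ $\abs{\bm{c}_m}^2$) for the deterministic bound $\gamma$ while preserving $\E[\bm{c}_m\bm{c}_m^\dagger]=\bm{T}$ (resp.\ $\E[\bm{g}_m\bm{g}_m^\dagger]=\bm{G}$), which is what routes the correct norm into each variance slot.
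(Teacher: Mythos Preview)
Your proposal is correct and follows essentially the same route as the paper: bound $\norm{\bm{S}_m}\leq 2\gamma/M$ via the triangle inequality, expand $\bm{S}_m\bm{S}_m^\dagger$ and $\bm{S}_m^\dagger\bm{S}_m$, use $\abs{\bm{g}_m}^2,\abs{\bm{c}_m}^2\leq\gamma$ to obtain the L\"owner bounds $\E[\bm{S}_m\bm{S}_m^\dagger]\preccurlyeq \gamma\bm{T}/M^2$ and $\E[\bm{S}_m^\dagger\bm{S}_m]\preccurlyeq \gamma\bm{G}/M^2$, sum over $m$, and plug into Theorem~\ref{Bernstein theorem}. Your treatment of the cross terms in the variance expansion is in fact slightly more explicit than the paper's, but the argument is otherwise identical.
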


\begin{proof}
	By construction, $\bm{S}_m$ are independent with mean zero so that we can apply Bernstein's inequality~\ref{Bernstein theorem}. We have
	\begin{equation*}
		\norm{\bm{S}_m}\leq \frac{1}{M}\qty(\big\|\bm{c}_m \bm{g}_m^\dagger \big\|+\norm{\bm{C}}) = \frac{1}{M}\qty( \norm{\bm{c}_m}\big\|\bm{g}_m^\dagger\big\|+\big\|\E\big[\bm{c}\bm{g}^\dagger\big]\big\|) \leq \frac{2\gamma}{M} =: L.
	\end{equation*}
	Next, we bound the matrix variance statistic $\nu(\boldsymbol{Z})$. First,
	\begin{align*}
		\mathbb{E} [\bm{S}_m\bm{S}_m^\dagger] & =\frac{1}{M^2} \E\qty[\norm{\bm{g}_m}^2 \bm{c}_m\bm{c}_m^\dagger -\bm{c}_m\bm{g}_m^\dagger \bm{C}^\dagger- \bm{C}\bm{c}_m\bm{g}_m^\dagger+\bm{C}\bm{C}^\dagger]\preccurlyeq\frac{1}{M^2}\qty( \gamma \bm{T} - \bm{C}\bm{C}^\dagger) \preccurlyeq \frac{\gamma}{M^2} \bm{T},
	\end{align*}
	where we used the notation $\boldsymbol{D} \preccurlyeq \boldsymbol{E}$ to signify that $\boldsymbol{E}-\boldsymbol{D}$ is positive semi-definite. Similarly,
	\begin{align*}
		\mathbb{E} [\bm{S}_m^\dagger\bm{S}_m] & =\frac{1}{M^2} \E\qty[\norm{\bm{c}_m}^2 \bm{g}_m\bm{g}_m^\dagger -\bm{C}^\dagger\bm{c}_m\bm{g}_m^\dagger - \bm{c}_m\bm{g}_m^\dagger\bm{C}+\bm{C}\bm{C}^\dagger]\preccurlyeq\frac{1}{M^2}\qty( \gamma \bm{G} - \bm{C}\bm{C}^\dagger) \preccurlyeq \frac{\gamma}{M^2} \bm{G} .
	\end{align*}
	Now, since $\bm{S}_m$ are independent with mean zero, we obtain that
	\begin{equation*}
		\nu(\bm{Z}) = \max \left\{\big\|\mathbb{E}\big(\bm{Z} \bm{Z}^\dagger\big)\big\|, \big\|\mathbb{E}\big(\bm{Z}^\dagger \bm{Z}\big)\big\|\right\} \leq \frac{\gamma}{M}\max \set{\norm{\bm{T}}, \norm{\bm{G}}}.
	\end{equation*}
	Applying Bernstein's inequality \ref{Bernstein theorem}, we obtain
	\begin{equation*}
		\mathbb{P}\{\|\bm{Z}\| \geq \delta \} \leq 2 N \exp(\frac{-M \delta ^2 /2 }{ \gamma\qty(\max \set{\norm{\bm{T}}, \norm{\bm{G}}} +{2 \delta }/{3})}).
	\end{equation*}
	Setting the right-hand side of the above to $1-p$ and solving for $M$ concludes the proof.
\end{proof}

\begin{lemma} \label{norm matrix operator lemma}
	Given $\Tt \colon \Ff_N\to\Ff_N$ it holds that $\norm{\Tt}\leq\sqrt{\kappa(\bm{G}_N)}\norm{\bm{T}^\Psi}$.
\end{lemma}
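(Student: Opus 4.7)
The plan is to translate the operator norm of $\Tt$ into a generalised Rayleigh quotient involving $\bm{T}^\Psi$ and $\bm{G}_N$, and then exploit the fact that the $\Ff$-norm on $\Ff_N$ is equivalent to the Euclidean norm on coefficient vectors, with equivalence constants controlled exactly by $\lambda_{\min}(\bm{G}_N)$ and $\lambda_{\max}(\bm{G}_N)$.

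First I would fix an arbitrary nonzero $v \in \Ff_N$ and expand it in the basis as $v = \sum_{i=1}^N c_i \psi_i$ with coefficient vector $\bm{c} \in \C^N$. By the definition of the Gram matrix one has
\begin{equation*}
    \norm{v}_\Ff^2 = \br{v,v}_\Ff = \bm{c}^\dagger \bm{G}_N \bm{c}.
\end{equation*}
By the definition of $\bm{T}^\Psi$ in Section \ref{notation section}, the coefficient vector of $\Tt v$ is $\bm{T}^\Psi \bm{c}$, so applying the same Gram-matrix identity to $\Tt v$ gives
\begin{equation*}
    \norm{\Tt v}_\Ff^2 = (\bm{T}^\Psi \bm{c})^\dagger \bm{G}_N (\bm{T}^\Psi \bm{c}).
\end{equation*}

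Next I would bound the numerator from above and the denominator from below using the fact that $\bm{G}_N$ is Hermitian positive definite. Its spectrum is contained in $[\lambda_{\min}(\bm{G}_N), \lambda_{\max}(\bm{G}_N)]$, and hence for any $\bm{w} \in \C^N$ one has the standard estimates $\lambda_{\min}(\bm{G}_N)\abs{\bm{w}}^2 \le \bm{w}^\dagger \bm{G}_N \bm{w} \le \lambda_{\max}(\bm{G}_N)\abs{\bm{w}}^2$. Applying these with $\bm{w} = \bm{T}^\Psi \bm{c}$ and $\bm{w} = \bm{c}$ respectively, together with the matrix-norm bound $\abs{\bm{T}^\Psi \bm{c}} \le \norm{\bm{T}^\Psi}\abs{\bm{c}}$, yields
\begin{equation*}
    \frac{\norm{\Tt v}_\Ff^2}{\norm{v}_\Ff^2} \;\le\; \frac{\lambda_{\max}(\bm{G}_N) \norm{\bm{T}^\Psi}^2 \abs{\bm{c}}^2}{\lambda_{\min}(\bm{G}_N) \abs{\bm{c}}^2} \;=\; \kappa(\bm{G}_N) \norm{\bm{T}^\Psi}^2.
\end{equation*}
Taking square roots and the supremum over $v \in \Ff_N \setminus \{0\}$ gives the claimed inequality $\norm{\Tt} \le \sqrt{\kappa(\bm{G}_N)}\norm{\bm{T}^\Psi}$.

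There is essentially no obstacle, since $\bm{G}_N$ is invertible by Assumption \ref{a1} so that $\lambda_{\min}(\bm{G}_N) > 0$ and $\kappa(\bm{G}_N)$ is well defined; the only minor point to note is that expressing $\kappa(\bm{G}_N)$ as the ratio $\lambda_{\max}/\lambda_{\min}$ (rather than $\norm{\bm{G}_N}\norm{\bm{G}_N^{-1}}$) uses that $\bm{G}_N$ is Hermitian, which is exactly the justification given just before the statement of the lemma.
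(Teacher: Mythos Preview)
Your proof is correct and follows essentially the same approach as the paper: both arguments use the spectral bounds of the Hermitian positive definite Gram matrix $\bm{G}_N$ to pass between the $\Ff$-norm on $\Ff_N$ and the Euclidean norm on coefficient vectors. The paper packages this step as an explicit orthonormalisation $\tilde{\Psi}_N := \bm{G}_N^{-1/2}\Psi_N$ together with the change-of-basis formula $\bm{T}^{\tilde{\Psi}_N} = \bm{G}_N^{-1/2}\bm{T}^{\Psi}\bm{G}_N^{1/2}$, whereas you work directly with the Rayleigh-quotient bounds $\lambda_{\min}\abs{\bm{w}}^2 \le \bm{w}^\dagger \bm{G}_N \bm{w} \le \lambda_{\max}\abs{\bm{w}}^2$; the two are the same computation under the substitution $\tilde{\bm{c}} = \bm{G}_N^{1/2}\bm{c}$.
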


\begin{proof}
	To establish a bound, we begin by orthonormalising $\Psi_N$ by considering
	\begin{equation} \label{orthonormalise0}
		\tll{\Psi}_N := \bm{G}_N^{-\nicefrac12} \Psi_N.
	\end{equation}
	Now, given $\psi \in \Ff_N,$ we can write $\psi =\tll{\bm{c}}\cdot \tll{\Psi}_N$ for some $\tll{\bm{c}}\in \C^N$. Using that $\tll{\Psi}_N$ is orthonormal, and the expression for the change of basis matrix given by \eqref{orthonormalise0} shows that
	\begin{equation} \label{ortho norm0}
		\norm{\Tt \psi}_\Ff = \abs{\bm{T}^{\tll{\Psi}_N} \tll{\bm{c}}} = \abs{\bm{G}_N^{-\nicefrac12}\bm{T}^{\Psi_N}\bm{G}_N^{\nicefrac12} \, \tll{\bm{c}}}\leq \norm{\bm{G}_N^{-\nicefrac12}}\norm{\bm{T}^{\Psi_N}}\norm{\bm{G}_N^{\nicefrac12}}\abs{\bm{\tll{c}}}.
	\end{equation}
	Now, given a matrix $\bm{B}$, its operator norm in the Euclidean metric is
	\begin{equation*}
		\norm{\bm{B}}^2 = \lambda_{\max}(\bm{B}\bm{B}^\dagger).
	\end{equation*}
	Applying this to $\bm{G}_N^{\nicefrac12}$ and $\bm{G}_N^{-\nicefrac12}$ and substituting back into \eqref{ortho norm0} completes the proof.
\end{proof}

\section{Table of notation} \label{notation table section}

The notation used throughout the manuscript is summarised in Table~\ref{tab: notation}.

\begin{table}
	\centering
	\caption{Overview of the notation.}
	\renewcommand{\arraystretch}{1.2}
	\begin{tabular}{c|p{12cm}}
		\textbf{Symbol}                                               & \textbf{Description}                                                          \\ \hline
		$\mathbb{X}$                                                  & state space of the dynamical system                                           \\
		$\mu$                                                         & probability measure on $\mathbb{X}$                                           \\
		$\Ff = L^2(\X,\mu)$                                           & ambient space                                                                 \\
		$\Aa \colon \Dd \subset \Ff \to \Ff $                         & target linear operator                                                        \\
		$\mathcal{D}$                                                 & domain of the operator $\mathcal{A}$                                          \\ $\|\cdot\|_{\mathcal{F}},\|\cdot\|_{\mathcal{D}}$   & norms on the function spaces $\mathcal{F} $ and $\mathcal{D}$  														\\
		$\{\psi_n\}_{n=1}^N$                                          & dictionary or set of functions used to approximate $\Aa $                     \\
		$\Psi_N$                                                      & first $N$ elements $\psi_1 ,\dots, \psi_N $ of the dictionary                 \\
		$\set{\bm{x}_m}_{m=1}^M$                                      & data sampled i.i.d.\ from $\mu$ used to approximate $\Aa $                    \\
		$\mathcal{F}_N = \mspan(\Psi_N)$                              & finite-dimensional subspace on which $\Aa $ is approximated                   \\
		$\Ff_\infty = \bigcup_{n=1}^\infty  \Ff_n$                  & infinite-dimensional space spanned by the dictionary                          \\
		$\Pp _{\Ff_N} $                                               & projection operator onto $\mathcal{F}_N$ using inner product on $\Ff $        \\
		$\Pp_{\Dd_N}$                                                 & projection operator onto $\mathcal{D}_N$ using inner product on $\Dd $        \\
		$\Aa_N = \restr{\Pp_{\Ff_N}\Aa}{\Ff _N} $                     & Galerkin projection of $\Aa $ onto $\mathcal{F}_N$                            \\
		$\wh{\Aa}_{NM}$                                               & approximation of $\Aa$ and $\Aa_N $ using $M$ samples and $N$ basis functions \\
		$\bm{T}^\Psi$                                                 & matrix representation of the operator $\Tt $ using the basis given by $\Psi$  \\
		$\bm{C}_N, \bm{G}_N$                                          & structure matrix of $\Aa $  and Gram matrix w.r.t.\ the basis $\Psi_N $       \\
		$\wh{\bm{C}}_{NM}, \wh{\bm{G}}_{NM}$                          & empirical structure and Gram matrices                                         \\
		$\wh{\mu }_M = \frac{1}{M}\sum_{m=1}^M \delta_{\bm{x}_m} $    & empirical measure associated with the $M$ samples                             \\
		$\wh{\Ff}_{M} = L^2(\X, \wh{\mu }_M )$                        & empirical space associated with the $M$ samples                               \\
		$\wh{\phi} = \sum_{m=1}^M \phi (\bm{x}_m) \delta _{\bm{x}_m}$ & function $\phi \in \Ff $ when viewed in $\wh{\Ff }_{M} $                      \\
		$\wh{\Psi}_N$                                                 & first $N$ elements of the dictionary when viewed in $\wh{\Ff }_{M}$           \\
		$\wh{\Ff}_{NM} = \mspan(\wh{\Psi}_N) \subset \Ff_M $          & span of dictionary in empirical space                                         \\
		$\wh{\Tt }$                                                   & operator $\Tt $ viewed as acting on $\wh{\Ff }_M $                            \\
		$\bm{\eta }_N, \bm{\xi }_N$                                   & additive noise in the samples and observations                                \\
		$\co{\Aa}_{NM}, \co{\bm{C}}_{NM}, \co{\bm{G}}_{NM}$           & approximations of $\Aa_N, \bm{C}_N, \bm{G}_N$ when noise is present           \\
		$\Phi \colon \mathbb{X} \to \mathbb{X} $                      & flow of the dynamical system                                                  \\
		$\mathcal{K}$, $\mathcal{K}_*$                                & Koopman operator and Perron--Frobenius operator                               \\
		$\mathcal{L}$, $\mathcal{L}^*$                                & infinitesimal generator of the Koopman operator and its adjoint               \\
	\end{tabular}
	\label{tab: notation}
\end{table}

\end{document}